\providecommand{\ymd}[3]{\csname

  @ifempty\endcsname{#1}{?#1/#2/#3?}{\csname

    @ifempty\endcsname{#2}{?#1/#2/#3?}{\csname

      @ifempty\endcsname{#3}{?#1/#2/#3?}{{\relax

          \day=#3\relax

          \month=#2\relax

          \year=#1\relax

          \number\day~\ifcase\month\or January\or February\or March\or

          April\or May\or June\or July\or August\or September\or

          October\or November\or December\fi

          \space\ifnum\year>0\relax

             \number\year

          \else

             \csname count@\endcsname1\relax

             \expandafter\advance\csname count@\endcsname-\year

             \expandafter\number\csname count@\endcsname~BC\fi}}}}}
\providecommand{\hide}[1]{.}"}
\providecommand{\Hide}[1]{\unskip}"}
\providecommand{\gobble}[1]{}"}
\providecommand{\Name}[1]{#1}"}
\providecommand{\cyr}{\PackageError{cyrillic}{Package not loaded.

Use \string\usepackage{cyrillic} to define \string\cyr\space appropriately}{}

\gdef\cyr{\def\cprime{c'}{\bf ?cyr?}}\cyr}"}
\protected@write\@auxout{}{\string

\gdef\string\abbr{\string\csname\space

 @gobble\string\endcsname}}\gdef\abbr{}\makeatother
\protected@write\@auxout{}{\string

\gdef\string\bibliographyonly{\string\protect\string\abbr}}\gdef\bibliographyonly{\protect\abbr}\makeatother
\protected@write\@auxout{}{\string

\gdef\string\bodyonly{}}\gdef\bodyonly#1{}\makeatother
\font\tencyr=wncyr10
\def\cyr{\fam\cyrfam\tencyr\cyracc} \textfont\cyrfam\tencyr
\chardef\tempcat=\the\catcode`\@
\def\cydot{{\mathsurround=0pt$\cdot$}}
\def\ubar#1{\oalign{#1\crcr\hidewidth
    \vbox to.2ex{\hbox{\char22}\vss}\hidewidth}}
\def\cprime{\/{\mathsurround=0pt$'$}}
\def\Cprime{{\mathsurround=0pt$'$}}
\def\cdprime{\/{\mathsurround=0pt$''$}}
\def\Cdprime{{\mathsurround=0pt$\ubar{\hbox{$''$}}$}}
\def\dbar{dj}           
\def\Dbar{Dj}           
\def\dz{dz}
\def\Dz{Dz}
\def\dzh{dzh\cydot }
\def\Dzh{Dzh\cydot }
\def\@gobble#1{}
\def\@testgrave{\`}
\def\@stressit{\futurelet\chartest\@stresschar }
\def\@stresschar#1{%
  \ifx #1y\def\result{\futurelet\chartest\@yligature}%
  \else \ifx #1Y\def\result{\futurelet\chartest\@Yligature}%
  \else \ifx\chartest\@testgrave \def\result{\accent"26 }%
  \else \def\result{\accent"26 #1}%
  \fi \fi \fi
  \result }
\def\@yligature{%
  \ifx a\chartest \def\result{\accent"26 \char"1F \@gobble}%
  \else \ifx u\chartest \def\result{\accent"26 \char"18 \@gobble}%
  \else \def\result{\accent"26 y}%
  \fi \fi
  \result }
\def\@Yligature{%
  \ifx a\chartest \def\result{\accent"26 \char"17 \@gobble}%
  \else \ifx A\chartest \def\result{\accent"26 \char"17 \@gobble}%
  \else \ifx u\chartest \def\result{\accent"26 \char"10 \@gobble}%
  \else \ifx U\chartest \def\result{\accent"26 \char"10 \@gobble}%
  \else \def\result{\accent"26 Y}%
  \fi \fi \fi \fi
  \result }
\def\!{\ifmmode \mskip-\thinmuskip \fi}
\def\cyracc{%
  \def\cydot{{\kern0pt}}%
  \def\cprime{\char"7E }\def\Cprime{\char"5E }%
  \def\cdprime{\char"7F }\def\Cdprime{\char"5F }%
  \def\dbar{dj}\def\Dbar{Dj}%
  \def\dz{\char"1E }\def\Dz{\char"16 }%
  \def\dzh{\char"0A }\def\Dzh{\char"02 }%
  \def\'##1{\if c##1\char"0F %
    \else \if C##1\char"07 %
    \else \accent"26 ##1\fi \fi }%
  \def\`##1{\if e##1\char"0B %
    \else \if E##1\char"03 %
    \else \errmessage{accent \string\` not defined in cyrillic}%
        ##1\fi \fi }%
  \def\=##1{\if e##1\char"0D %
    \else \if E##1\char"05 %
    \else \if \i##1\char"0C %
    \else \if I##1\char"04 %
    \else \errmessage{accent \string\= not defined in cyrillic}%
        ##1\fi \fi \fi \fi }%
  \def\u##1{\if \i##1\accent"24 i%
    \else \accent"24 ##1\fi }%
  \def\"##1{\if \i##1\accent"20 \char"3D %
    \else \if I##1\accent"20 \char"04 %
    \else \accent"20 ##1\fi \fi }%
  \def\!{\ifmmode \def\result{\mskip-\thinmuskip}%
    \else \def\result{\@stressit}\fi \result}}
\def\keep@cyracc{\let\cyr=\relax \let\i=\relax
        \let\ubar=\relax \let\cydot=\relax
        \let\cprime=\relax \let\Cprime=\relax
        \let\cdprime=\relax \let\Cdprime=\relax
        \let\dbar=\relax \let\Dbar=\relax
        \let\dz=\relax \let\Dz=\relax
        \let\dzh=\relax \let\Dzh=\relax
        \let\'=\relax \let\`=\relax \let\==\relax
        \let\u=\relax \let\"=\relax \let\!=\relax }
\def\@grabletters#1{%
  \let\@n@makeit#1\def\@tempb{}\@n@tstone}
\def\@n@tstone{\futurelet\@tempa\@n@test}
\def\@n@test{%
    \ifcat a\@tempa \let\next\@n@getone
    \else           \let\next\call@n@makeit
    \fi\next}
\def\@n@getone#1{\edef\@tempb{\@tempb\string#1}\@n@tstone}
\def\call@n@makeit{\@n@makeit}
\def\operatorlist{}
\let\nxpar\relax
\def\call@n@makeit{%
  \tildeop@count@use{tilde@use.\@tempb}%
  \@n@makeit}
\def\tildeop@count@use#1{%
  \@ifundefined{#1}{%
    \expandafter\xdef\csname #1\endcsname{\the\@ne}%
    \xdef\operatorlist{\operatorlist\nxpar{\@tempb}}}
  {\expandafter\count@\csname #1\endcsname\relax
    \advance\count@\@ne
    \expandafter\xdef\csname #1\endcsname{\the\count@}}
}
  \def\nxpar#1{\csname tilde@use.#1\endcsname~#1, }%
\def\makeit@punimath{\@n@m@th{\null\text{\rm\@tempb}}\@n@l@m@ts}
\def\punimath#1#2{%
  \let\@n@m@th#1\relax
  \let\@n@l@m@ts#2\relax
  \@grabletters\makeit@unimath}
\def\makeit@operatorname{\qopname\relax o{\text{\upshape\@tempb}}}
\def\tilde@operatorname{\@grabletters\makeit@operatorname}
\let\orig@tilde~
\DeclareRobustCommand\new@tilde{\ifmmode\let\next\tilde@operatorname
  \else\let\next\orig@tilde\fi\next}
\newif\if@inSet \@inSetfalse
\newcommand{\set@mid}{%
    \;
    \mathchoice
        {\vrule\@width.8pt}
        {\vrule\@width.8pt}
        {\vrule\@width.6pt}
        {\vrule\@width.6pt}
    \;
  }
\newcommand{\new@semicolon}{%
  \if@inSet
  \ifcase\set@sep\set@mid
  \or\colon
  \else\set@mid
  \fi
  \else;\fi}
\def\Set{ \@Set}
\def\@Set#1{\left\{\@inSettrue#1\right\}}
\def\Left#1{\left#1\@inSettrue}
\def\Right#1{\right#1}
\def\genericSet{ \@genericSet}
\def\@genericSet#1#2#3{\left#1\@inSettrue#3\right#2}
\def\List{ \@genericSet()}
\def\@List#1{\left(\@inSettrue#1\right)}
\def
\l\uppercase{\global\let~\ell}}
\def\log{\mathop{\text{\upshape log}}}
\def\lim{\mathop{\text{\upshape lim}}}
\providecommand{\iso}{\simeq}
\newcommand{\Map}{%
  \Map@d}
\newcommand{\Map@d}[3][\@gobble]{%
  \def\@tempc{#1\colon}%
  \def\@tempa{%
      #2 & \maps@longto & #3
    }%
    \@ifnextchar\bgroup{\Map@d@arg}{\Map@d@fin}}
\def\Map@d@arg#1#2{%
  \expandafter\def\expandafter\@tempa\expandafter{\@tempa
    , \\{}
    #1 & \maps@longmapsto & #2}%
  \@ifnextchar\bgroup{\Map@d@arg}{\Map@d@fin}}
\def\Map@d@fin{%
  \@tempc
  \begin{array}{rcl}
    \@tempa
  \end{array}}
\newcommand{\Map@t}[3][\@gobble]{%
  \def\@tempa{%
    #1\colon #2 \maps@to #3
    }%
    \@ifnextchar\bgroup{\Map@t@arg}{\Map@t@fin}}
\def\Map@t@arg#1#2{%
  \expandafter\def\expandafter\@tempa\expandafter{\@tempa
    ,\;
    #1 \maps@mapsto #2}%
  \@ifnextchar\bgroup{\Map@t@arg}{\Map@t@fin}}
\def\Map@t@fin{%
  \@tempa}
\newcommand{\map}[5][\@gobble]{%
  \mathchoice{
    #1\colon
    \begin{array}{rcl}
      #2 & \maps@longto & #3, \\{}
      #4 & \maps@longmapsto & #5
    \end{array}
    }{
    #1\colon #2 \maps@to #3,\;#4\maps@mapsto #5
    }{
    #1\colon #2 \maps@to #3,\;#4\maps@mapsto #5
    }{
    #1\colon #2 \maps@to #3,\;#4\maps@mapsto #5
    }}
\newcommand{\widemap}[5][\@gobble]{%
  \mathchoice{
    #1\colon
    #2 \maps@longto #3,\quad
    #4 \maps@longmapsto #5
    }{
    #1\colon #2 \maps@to #3,\;#4\maps@mapsto #5
    }{
    #1\colon #2 \maps@to #3,\;#4\maps@mapsto #5
    }{
    #1\colon #2 \maps@to #3,\;#4\maps@mapsto #5
    }}
\newcommand{\shortmap}[3][\@gobble]{%
  \mathchoice{
    #1\colon
    #2 \maps@longto #3
    }{
    #1\colon #2 \maps@to #3
    }{
    #1\colon #2 \maps@to #3
    }{
    #1\colon #2 \maps@to #3
    }}
\newcommand{\shortiso}[3][\@gobble]{%
  \mathchoice{
    #1\colon
    #2 \maps@xrightarrow{\quad\iso\quad} #3
    }{
    #1\colon #2 \maps@xrightarrow{\iso} #3
    }{
    #1\colon #2 \maps@xrightarrow{\iso} #3
    }{
    #1\colon #2 \maps@xrightarrow{\iso} #3
    }}
\newcommand{\normalmap}{%
  \let\maps@to\rightarrow
  \let\maps@longto\longrightarrow
  \let\maps@mapsto\mapsto
  \let\maps@longmapsto\longmapsto
  \let\maps@xrightarrow\xrightarrow
  }
\newcommand{\inversemap}{%
  \let\maps@to\leftarrow
  \let\maps@longto\longleftarrow
  \let\maps@mapsto\mapsfrom
  \let\maps@longmapsto\longmapsfrom
  \let\maps@xrightarrow\xleftarrow
  }
\newcommand{\degOne}{{\color{red}l}}
\renewcommand{\degOne}{{\color{red}k}}
\renewcommand{\degOne}{k}
\newcommand{\C}{\mathbb{C}}
\newcommand{\F}{\mathbb{F}}
\newcommand{\nodiv}{\nmid}
\newcommand{\dg}{d}
\renewcommand{\dg}{\text{\textcolor{blue}{$d$}}}
\renewcommand{\dg}{n}
\newcommand{\parTwo}{{\color{red}CC}}
\renewcommand{\parTwo}{a}
\newcounter{enumabc}
\def\theenumabc{\@alph\c@enumabc}
\def\labelenumabc{\theenumabc.}
\def\p@enumabc{.}
\newcounter{enumEsub}
\def\theenumEsub{\ensuremath{E_{\@arabic\c@enumEsub}}}
\def\labelenumEsub{\theenumEsub:}
\def\p@enumEsub{}
 \providecommand{\ymd}[3]{\csname @ifempty\endcsname{#1}{?#1/#2/#3?}{\csname
  @ifempty\endcsname{#2}{?#1/#2/#3?}{\csname
  @ifempty\endcsname{#3}{?#1/#2/#3?}{{\relax \day=#3\relax \month=#2\relax
  \year=#1\relax \number\day~\ifcase\month\or January\or February\or March\or
  April\or May\or June\or July\or August\or September\or October\or November\or
  December\fi \space\ifnum\year>0\relax \number\year \else \csname
  count@\endcsname1\relax \expandafter\advance\csname count@\endcsname-\year
  \expandafter\number\csname count@\endcsname~BC\fi}}}}}
  \providecommand{\hide}[1]{.} \providecommand{\Hide}[1]{\unskip}
  \providecommand{\gobble}[1]{} 
  \providecommand{\Name}[1]{#1} 
  \providecommand{\cyr}{\PackageError{cyrillic}{Package not loaded. Use
  \string\usepackage{cyrillic} to define \string\cyr\space appropriately}{}
  \gdef\cyr{\def\cprime{c'}{\bf ?cyr?}}\cyr}
\protected@write\@auxout{}{\string
  \gdef\string\abbr{\string\csname\space
  @gobble\string\endcsname}}\gdef\abbr{}\makeatother
\protected@write\@auxout{}{\string
  \gdef\string\bibliographyonly{\string\protect\string\abbr}}\gdef\bibliographyonly{\protect\abbr}\makeatother
\protected@write\@auxout{}{\string
  \gdef\string\bodyonly{}}\gdef\bodyonly#1{}\makeatother
\begin{document}

\begin{frontmatter}
\title{Normal form for Ritt's Second Theorem}

\author{Joachim von~zur~Gathen}
\address{B-IT, Universit\"at Bonn\\
  D-53113 Bonn\\
http://cosec.bit.uni-bonn.de/}
\ead{gathen@bit.uni-bonn.de}

\begin{abstract}
  Ritt's Second Theorem deals with \emph{composition collisions}
  $g\circ h=g^{*}\circ h^{*}$ of univariate polynomials over a field,
  where $\deg g =\deg h^{*}$. Joseph Fels Ritt (1922) presented two
  types of such decompositions. His main result here is that these
  comprise all possibilities, up to some linear
  transformations. Because of these transformations, the result has
  been called ``difficult to use''. We present a normal form for
  Ritt's Second Theorem, which is hopefully ``easy to use'', and
  clarify the relation between the two types of examples. This yields
  an exact count of the number of such collisions in the ``tame
  case'', where the characteristic of the (finite) ground field does
  not divide the degree of the composed polynomial.

  A slightly abridged version of this paper has appeared in
  \emph{Finite Fields and Their Applications} {\bf 27} (2014), pages 41--71.
  \copyright\ 2013 Elsevier Inc.
\end{abstract}

\begin{keyword}
Computer algebra\sep univariate polynomial decomposition\sep
bidecomposition\sep Ritt's Second Theorem\sep
finite fields\sep combinatorics on polynomials
\end{keyword}


\end{frontmatter}


\section{Introduction}

In the 1920s, Ritt, Fatou, and Julia investigated the composition
\begin{equation}
\label{collision}
f=g\circ h=g(h)
\end{equation}
 of univariate polynomials over a field $F$ for $F =
\mathbb{C}$. It emerged as an important question to determine the
(distinct-degree) collisions (or nonuniqueness) of such decompositions, that is,
different components $(g,h)\neq(g^{*},h^{*})$ with equal composition
$g\circ h=g^{*}\circ h^{*}$ and equal sets of degrees: $\deg g = \deg
h^{*} \neq \deg h = \deg g^{*}$.

Composition with linear polynomials
(polynomials of degree 1) introduces inessential
ambiguities. Ritt presented two types of essential collisions:
\begin{align}\label{al:colli}
x^{l}\circ x^{k}w(x^{l}) & = x^{kl}w^{l}(x^{l})=x^{k}w^{l}\circ x^{l},\\
T_{m}(x,z^{l})\circ T_{l}(x,z) & = T_{lm}(x,z)=T_{l}(x,z^{m})\circ
T_{m}(x,z),\nonumber
\end{align}
where $w\in F[x]$, $z\in F^{\times} = F \smallsetminus \{0\}$, and $T_{m}$
is the $m$th Dickson polynomial of the first kind. And then he proved
that these are essentially all possibilities. Details are given below.

Ritt worked with $F=\mathbb{C}$ and used analytic
methods. Subsequently, his approach was replaced by algebraic methods,
in the work of \cite{lev42} and \cite{dorwha74}, and \cite{sch82c}
presented an elementary but long and involved argument. Thus Ritt's
Second Theorem was also shown to hold in positive characteristic
$p$. The original versions of this required $p>  n = \deg f = \deg(g\circ
h)$. \cite{zan93} reduced this to the milder and more natural
requirement $g'(g^{*})'\neq 0$. His proof works over an algebraically
closed field, and Schinzel's \citeyear{sch00c} monograph adapts it to
finite fields.
These results assume that $\gcd(\deg g, \deg g^*) = 1$;  \citet{tor88a}
removed this condition provided that $p \nmid n$.

Ritt's Second Theorem, stated as \ref{RST} below, involves four unspecified
linear functions, and it is not clear whether there is a relation
between the first and the second type of example. In particular, a
uniqueness property in Ritt's theorem is not obvious, and
indeed \cite{beang00} are puzzled by its absence. On their page 128,
they write, adapted to the notation of the present paper, ``Now these rules are
a little less transparent, and a little less independent, than may
appear at first sight. First, we note that [the First Case], which is
stated in its conventional form, is rather loosely defined, for the
$k$ and $w$ are not uniquely determined by the form $x^{k}w(x^{l})$;
for instance, if $w(0)=0$, we can equally well write this expression
in the form $x^{k+l}\tilde w(x^{l})$, where $\tilde w = w/x$. Next,
$T_{2}(x,1)=x^{2}-2$ differs by a linear component from $x^{2}$, so
that in some circumstances it is possible to apply [the Second Case]
to $T_{2}(x,1)$, then [a linear composition], and then (on what is
essentially the same factor) [the Second Case]. These observations
perhaps show why it is difficult to use Ritt's result.'' These
well-motivated concerns are settled by the result of the present
paper.

Similarly, \cite{bin95a}, Section 7, writes: ``This raises the
question, whether the two theorems of Ritt can be used to give a
general exact description of all possible decompositions. In
particular, we may ask whether there is a canonical decomposition.''
We answer this question for two components.

Namely, \ref{th:fifi} presents a normal form for the decompositions in
Ritt's Theorem. It makes Zannier's assumption $g'(g^{*})'\neq 0$ and
the standard assumption $~gcd(l,m)=1$, where $m=k+l\deg w$ in
\ref{al:colli}. This normal form is unique unless $p\mid m$.
 We also
elucidate the relation between the first and the second type of
example.

 A fundamental dichotomy in this business is
whether $p$ divides $\dg$ or not. The designation \emph{tame} and
\emph{wild} was introduced in \citet{gat90c,gat90d} for the cases
$p\nmid \dg$ and $p\mid\dg$, respectively, in analogy with
ramification indices.  An important consequence--and the original
motivation--of this normal form is that we can count exactly the number
of ``collisions'' as described by Ritt's Second Theorem (\ref{RST}),
over a finite field and in the tame case.
In turn, this is an essential ingredient for counting, approximately
or exactly, the decomposable polynomials over a finite field; see
\cite{gat14}.
Equal-degree collisions, where the degree conditions are replaced
by $\deg g = \deg g^*$, occur only in the wild case and
are not considered in this paper.

\ref{tab:cor-table} gives a pr\'ecis of these counting results. The
notation consists of a finite field $\mathbb{F}_{q}$ of characteristic
$p$, integers $l$ and $m$ with $m>l\geq 2$, $\dg=lm$,
the set $D_{n,l}$  of  monic
compositions of degree $n$ with constant coefficient $0$ and a left
component of degree $l$ (see \ref{eq:Zan}),
 $s=\lfloor
m/l\rfloor$, $c=\lceil(m-l+1)/l\rceil$,  $t=\#(D_{\dg,l}\cap
D_{\dg,m} \smallsetminus \mathbb{F}_{q}[x^{p}])$, and Kronecker's $\delta$.
\begin{table}
  \begin{center}
    \setcounter{enumi}{0}
    \begin{tabular}{l|l|l}
      &conditions&bounds on $t$\\\hline
       (i)\Label{lem:div-1-table}{(i)}
      &$p\nmid n,\; ~gcd(l,m)=1$&$t=q^{s+1}+(1-\delta_{l,2})(q^{2}-q)$\\
      &&$q^{s+1}\leq t\leq q^{s+1}+q^{2}\leq 2q^{s+1}$\\
       (ii)\Label{lem:div-1/4-table}{(ii)}
      &
      $p\mid l,\; ~gcd(l,m)=1$&$t=0$\\
       (iii)\Label{lem:div-1/3-table}{(iii)}
      &$p\mid m,\;
      ~gcd(l,m)=1$&$t\leq q^{s+1}-q^{\lfloor s/p \rfloor +1}$\\
       (iv)\Label{thm:FFC-1-table}{(iv)}
      &$p\nmid n,\; l\mid m$&$t=q^{2l+s-3}$\\
       (v)\Label{thm:FFC-1a-table}{(v)}
      &$p\nmid n,\; l\nmid m,\;
      ~gcd(l,m) = i$&$t=q^{2i}(q^{s-1}+(1-\delta_{l,2})(1-q^{-1}))$\\
       (vi)\Label{thm:FFC-2-table}{(vi)}
      &$p\nmid n$&$t\leq q^{2l+s-3}$\\
       (vii)\Label{cor:ffchar-2-table}{(vii)}
      &$p\nmid l,\; p\mid m,$ &$t\leq q^{m+\lceil l/p\rceil-2}$\\
       (viii)\Label{cor:ffchar-3-table}{(viii)}
      &$p\mid l$ &$ t\leq q^{m+l-c+\lceil c/p\rceil-2}$
    \end{tabular}
  \end{center}
\caption{Bounds on $t$ in various cases.}\label{tab:cor-table}
\end{table}


The basic normal form result is augmented in several
directions. Firstly, we can relinquish the condition $g'(g^{*})'\neq
0$, keeping the assumption $~gcd(l,m)=1$ (\ref{lem:div2a}).  For the
standard form of Ritt's Second Theorem, this is already noted in
\cite{zan93}. Secondly, based on a result of \citet{tor88a}, we can
allow $l$ and $m$ to have a nontrivial common divisor, provided that
$p\nmid \dg$ (\ref{thm:FFC}). Finally, if $p\mid\dg$ and
$~gcd(l,m)>1$, we use derivatives to obtain restrictions on the
decompositions (\ref{thm:div2a}); in contrast to the previous results,
these are not expected to be sharp.

The present results are of independent interest, but they arose in a
larger context. Leonard Carlitz and later Stephen Cohen and others
derived estimates for the number of reducible multivariate
polynomials; see \citet{gatvio13} for more history and
references. The latter paper also contains results on squareful and
relatively irreducible (that is, irreducible and not absolutely
irreducible)
multivariate polynomials. Fairly precise estimates for decomposable
polynomials are known in the multivariate \citep{gat10f}
and the univariate \citep{gat14} scenarios, but with less precision in the
latter case in some special ``wild'' situations, namely when the
characteristic is the smallest prime divisor of $\dg$ and divides it
exactly twice. The counting results of the present paper form a
cornerstone for those univariate estimates, and were actually derived
in the context of that work. \citet*{blagat13} determine the number of
decomposables of degree $p^{2}$. \citet*{boddeb09} also deal with counting.
On a different but related topic, \cite{ziemue08} found interesting
characterizations for Ritt's First Theorem, which deals with complete
decompositions, where all components are indecomposable.

\section{Decompositions}\label{secDec}

\begin{definition}
  \label{defComp}
  For $g, h \in F[x]$,  $ f = g \circ h = g(h) \in F[x]$
  is their \emph{composition}.  If $\deg g, \deg h \geq 2$, then
  $(g,h)$ is a \emph{decomposition} of $f$. A polynomial $f \in F[x]$
  of degree at least 2
  is \emph{decomposable} if there exist such $g$ and $h$, otherwise
  $f$ is \emph{indecomposable}.
\end{definition}
In the literature, $(g,h)$ is sometimes called a
\emph{bidecomposition} of $f$.
A nonzero polynomial $f\in F[x]$ over a field $F$ is \emph{monic} if
its leading coefficient $~lc(f)$ equals $1$.  We call $f$
\emph{original} if its graph contains the origin, that is, $f(0)=0$.

\label{invariance1}
  Multiplication by a unit or addition of a constant does not change
  decomposability, since
  $$
  f = g \circ h \Longleftrightarrow a f+b = (a g+b) \circ h
  $$
  for all $f$, $g$, $h$ as above and $a,b \in F$ with $a\neq 0$.  In
  other words, the set of decomposable polynomials is invariant under
  this action of $F^{\times} \times F$ on $F[x]$,
    and the linear polynomials are the units
  for the composition operation.
In particular, if we have a set $S$ of monic original composable
polynomials and let $S^{*}$ be the set of all their compositions with a
linear component on the left, then
\begin{equation}\label{eq:linfac}
\#S^{*} = q^{2}(1-q^{-1}) \cdot \#S.
\end{equation}

Furthermore, any decomposition $(g,h)$ can be normalized by this
action, by taking $a = ~lc (h)^{-1} \in F^{\times}$, $b=-a \cdot h(0)
\in F$, $g^{*} = g((x-b)a^{-1}) \in F[x]$, and $h^{*} = ah+b$.  Then
$g\circ h = g^{*} \circ h^{*}$ and $g^{*}$ and $ h^{*}$ are monic original.

It is therefore sufficient to consider compositions $f = g \circ h$
where all three polynomials are monic original. Then $(g,h)$ is called
monic original. If $D_{\dg}$ is the set of such $f$ of degree $\dg$,
then the number of all decomposable polynomials of degree $\dg$, not
restricted to monic original, is
\begin{equation}\label{eq:notrestr}
q^{2}(1-q^{-1}) \cdot \#D_{\dg}
\end{equation}

We fix some notation for the remainder of this paper. $F$ is a field
of characteristic $p\geq 0$. For $\dg \geq
1$, we write
\begin{align*}
  P_{\dg} & = \{f \in F[x] \colon \deg f = \dg, f\text{ monic original}\}.
\end{align*}
A decomposition \ref{collision} is \emph{tame} if $p\nmid ~deg g$, and
$f$ is \emph{tame} if $p\nmid \dg$. It is well known that in a tame
decomposition, $g$ and $h$ are uniquely determined by $f$ and $\deg g$.
For any proper divisor
$e$ of $\dg$, we have the composition map
$$
\map[\gamma_{\dg,e}] {P_{e} \times P_{\dg/e}} {P_{\dg}}
{(g,h)} {g \circ h,}
$$
corresponding to \ref{collision}, and set
\begin{equation}\label{eq:Zan}
  D_{\dg,e}= ~im \gamma_{\dg,e}.
\end{equation}

The set $D_{\dg}$ of all decomposable polynomials in $P_{\dg}$
satisfies
\begin{equation}\label{substack}
  D_{\dg}= \bigcup_{\substack{e\mid \dg\\1<e<\dg}} D_{\dg,e}.
\end{equation}
In particular, $D_{\dg} = \varnothing$ if $\dg$ is prime. Our
collisions turn up in the resulting inclusion-exclusion formula for
$\# D_{\dg}$. Over a finite field $\mathbb{F}_{q}$ with $q$ elements,
we have
\begin{align*}
  \#P_{\dg} &= q^{\dg-1},\\
  \#D_{\dg,e}  &\leq q^{e+n/e-2}.
\end{align*}

It is useful to single out a special case of wild compositions.
\begin{definition}\label{rem:coll}
Over a field $F$ of characteristic $p > 0$,
  we call \emph{Frobenius composition} any $f \in F[x^{p}]$, since
  then $f=x^{p} \circ h^{*}$ for some $h^{*} \in P_{\dg/p}$, and any
  decomposition $(g,h)$ of $f = g \circ h$ is a \emph{Frobenius
    decomposition}. For any integer $j$, we denote by $\varphi_{j}
  \colon F \longrightarrow F$ the $j$th power of the Frobenius
  map  with
  $\varphi_{j}(a)= a^{p^{j}}$ for all $a \in F$, and extend it to an
  $\mathbb{F}_{p}$-linear map $\varphi_{j} \colon F[x]
  \longrightarrow F[x]$ with $\varphi_{j}(x)=x$. Then if $h \in F[x]$,
  we have
  \begin{equation}\label{eq:frob}
    x^{p^{j}} \circ h = \varphi_{j} (h) \circ x^{p^{j}}.
  \end{equation}
\end{definition}

Thus any Frobenius composition except $x^{p^{2j}}$ is the result of a
collision. Over $F = \mathbb{F}_{q}$, there are $q^{p^{j}-1}-1$ many
$h \in P_{p^{j}}$ with $h \neq x^{p^{j}}$ and for $m \neq p^{j}$, this
produces $q^{m-1}$ collisions with $h \in P_{m}$. This example is
noted in \cite{sch82c}, Section I.5, page 39.

\begin{example}\label{ex:sl}
  We look at decompositions $(g,h)$ of univariate quartic polynomials
  $f$, so that $\dg=4$. By \ref{invariance1}, we may assume $f\in
  P_{4}$, and then also $g$ is monic original. Thus the general case
  is
  $$
  (x^{2}+ax) \circ (x^{2}+bx) = x^{4} +ux^{3} + vx^{2} + wx \in F[x],
  $$
  with $a,b,u,v,w \in F$.  We find that with $a=2w/u$ and $b=u/2$
  (assuming $2u \neq 0$), the cubic and linear coefficients match, and
  the whole decomposition does if and only if
  $$u^{3} -4uv + 8w =0.$$
  This is a defining equation for the $2$-dimensional hypersurface of
  decomposable polynomials in $P_{4}$ (if $~char F \neq
  2$). This example is also in \cite{barzip76,barzip85}.
\end{example}

For $f \in P_{n}$ and $a \in F$, the (original) \emph{shift} of $f$ by
$a$ is
$$
f^{[a]}= (x - f(a)) \circ f \circ (x+a).
$$
Then $f^{[a]}$ is again monic original. This defines an action of the
additive group $F$ on $P_{n}$. It respects composition:
$$
f = g \circ h \Longleftrightarrow f^{[a]} = g^{[h(a)]} \circ h^{[a]}.
$$
We write $(g,h)^{[a]} = (g^{[h(a)]}, h^{[a]})$ for this decompositon
of $f^{[a]}$.

\section{Normal form: nonvanishing derivatives and coprime
  degrees}\label{sec:collcomp}

In this section, we treat the most basic (and most important) case of
(distinct-degree) collisions, namely, where the two component degrees
are coprime and the left components  have a nonvanishing
derivative. The latter is always satisfied in the tame case.
The following is an example of a collision:
$$
x^{k}w^{l} \circ x^{l}= x^{kl}w^{l}(x^{l})= x^{l} \circ x^{k}w(x^{l}),
$$
for any polynomial $w \in F[x]$, where $F$ is a field (or even a
ring).  We define the (bivariate) \emph{Dickson polynomials of the
first kind} $T_{m} \in F[x,y]$ by $T_{0}=2$, $T_{1}=x$,
and
\begin{equation}\label{Trecursion}
  T_{m}=xT_{m-1}- yT_{m-2}
  \text{ for }m \geq 2.
\end{equation}
The monograph of \cite{lidmul93a} provides extensive information about
these polynomials. We have $T_{m}(x,0)= x^{m}$, and $T_{m}(x,1)$ is
closely related (over $\mathbb R$) to the \emph{Chebyshev polynomial} $C_{\dg}=\cos(\dg
\arccos x)$, as $T_{\dg}(2x,1)=2C_{\dg}(x)$. $T_{m}$ is monic (for $m
\geq 1$) of degree $m$, and
$$
T_{m}= \sum_{0 \leq i \leq
  m/2}\frac{m}{m-i}\binom{m-i}{i} x^{m-2i} (-y)^{i} \in F[x,y].
$$
Furthermore,
\begin{equation}\label{eq:Trecursion}
T_{m}(x,y^{l})\circ T_{l}(x,y)=
T_{lm}(x,y)=T_{l}(x,y^{m})\circ T_{m}(x,y),
\end{equation}
and if $l \neq m$, then substituting any $z \in F$ for $y$ yields a
collision. (Here and in the previous example, the components are not
necessarily original.)

Ritt's Second Theorem is the central tool for understanding
distinct-degree collisions, and the following notions enter the
scene. The functional inverse $v^{-1}$ of a linear polynomial
$v=ax+b$ with $a$, $b \in F$ and $a \neq 0$ is defined as
$v^{-1}=(x-b)/a$.  Then $v^{-1} \circ v = v \circ v^{-1} =
x$. Two pairs $(g,h)$ and $(g^{*},h^{*})$ of polynomials are called
\emph{equivalent} if there exists a linear polynomial $v$ such that
$$
g^{*}=g \circ v, \ h^{*}=v^{-1} \circ h.
$$
Then $g \circ h = g^{*}\circ h^{*}$, and we write $(g,h) \sim
(g^{*},h^{*})$. The following result of \cite{rit22} (for $F=\C$) says
that, under certain conditions, the examples above are essentially the
only distinct-degree collisions. We use the strong version of
\cite{zan93}, adapted to finite fields. The adaption uses
\cite{sch00c}, Section 1.4, Lemma 2, and leads to his Theorem
8. Further references can be found in this monograph as well.
\begin{fact}\label{RST}(Ritt's Second Theorem)
  Let $l$ and $m$ be integers, $F$ a field, and $g$, $h$, $g^{*}$,
  $h^{*}\in F[x]$ with
  \begin{equation}\label{eq:RST-1}
    m > l \geq 2, \gcd(l,m)=1, \deg g= \deg h^{*} = m,\\
    \deg h = \deg g^{*} = l,
  \end{equation}
  \begin{equation}\label{eq:RST-2}
    g'(g^{*})' \neq 0,
  \end{equation}
  where $g'= \partial g/ \partial x$ is the derivative of $g$. Then
  \begin{equation}\label{eq:RST-3}
    g \circ h = g^{*} \circ h^{*}
  \end{equation}
  if and only if
  $$
  \exists k \in \mathbb{N}, v_{1}, v_{2} \in F[x] ~\text{linear}, w
  \in F[x] ~\text{with}~ k+l \deg w = m, z\in F^{\times},
  $$
  so that either
  \begin{equation}
    \tag*{First Case}
    \left\{
      \begin{aligned}
        (v_{1} \circ g, h \circ v_{2})  &\sim (x^{k} w^{l}, x^{l}),\\
        (v_{1} \circ g^{*}, h^{*} \circ v_{2}) &\sim (x^{l},
        x^{k}w(x^{l})),
      \end{aligned}
    \right.
  \end{equation}
  or
  \begin{equation}
    \tag*{Second Case}
    \left\{
      \begin{aligned}
        (v_{1} \circ g, h \circ v_{2})&\sim ( T_{m}(x,z^{l}), T_{l}(x,z)),\\
        (v_{1} \circ g^{*}, h^{*} \circ v_{2}) &\sim (T_{l}(x,z^{m}), T_{m}(x,z)).
      \end{aligned}
    \right.
  \end{equation}
\end{fact}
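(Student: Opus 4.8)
The plan is to treat the two directions separately, the ``if'' direction being a direct verification and the ``only if'' direction carrying the substance of the theorem. For ``if'', I would simply check that the two exhibited families satisfy \ref{eq:RST-3}: the monomial identity $x^{k}w^{l}\circ x^{l}=x^{kl}w^{l}(x^{l})=x^{l}\circ x^{k}w(x^{l})$ underlying \ref{al:colli} handles the First Case, and the Dickson identity \ref{eq:Trecursion}, specialized at $y=z$, handles the Second Case. In both, the degree constraints $\deg g=\deg h^{*}=m$ and $\deg h=\deg g^{*}=l$ of \ref{eq:RST-1} hold with $k+l\deg w=m$, and one checks $g'(g^{*})'\neq 0$ in the respective forms. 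Since the relation $\sim$ and composition with the linear $v_{1},v_{2}$ preserve \ref{eq:RST-3}, this suffices. Throughout I would first pass to the algebraic closure $\overline{F}$, establish the classification there, and descend at the end.

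For ``only if'', I would translate the problem into the language of function fields. Writing $t$ for the common value $g\circ h=g^{*}\circ h^{*}$, viewed as a transcendental of degree $n=lm$, the field $F(x)$ is a degree-$n$ extension of $F(t)$, and the two decompositions furnish intermediate fields $K=F(h)$ and $K^{*}=F(h^{*})$ with $[F(x):K]=l$, $[K:F(t)]=m$ and $[F(x):K^{*}]=m$, $[K^{*}:F(t)]=l$. Because every polynomial involved is monic of positive degree, the place of $F(t)$ at infinity is totally ramified in $F(x)$ with ramification index $n$, hence lies beneath a single place in each intermediate field, with the indices multiplying correctly. The hypothesis $g'(g^{*})'\neq 0$ makes $g$ and $g^{*}$ separable, so that the ramification above $t=\infty$ at the first level is tame and the inertia there is cyclic of order $n$.

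I would next pass to the Galois closure $N$ of $F(x)/F(t)$, with group $G=\mathrm{Gal}(N/F(t))$ acting on the $n$ cosets of the point stabilizer $H=\mathrm{Gal}(N/F(x))$. The intermediate fields $K$ and $K^{*}$ correspond to groups $U,V$ with $H\subseteq U,V\subseteq G$, $[G:U]=m$, and $[G:V]=l$. The totally ramified infinite place supplies an element $\sigma\in G$ acting as an $n$-cycle, and the genus-$0$ constraint on $F(x)$, fed into Riemann--Hurwitz, bounds the remaining ramification severely. The crucial leverage is $\gcd(l,m)=1$: together with the $n$-cycle $\sigma$ and the two coprime-index subgroups, it forces $G$ into one of exactly two shapes, an affine (metacyclic) group or a dihedral group. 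The affine case unwinds to the monomial normal form $(x^{k}w^{l},x^{l})$ of the First Case, while the dihedral case, via the identification of the Dickson polynomials with the quotient of $\overline{F}(x)$ by the inversion $x\mapsto z/x$, yields the Chebyshev-type normal form $(T_{m}(x,z^{l}),T_{l}(x,z))$ of the Second Case.

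The main obstacle is exactly this group-theoretic dichotomy: proving that a transitive group of degree $n=lm$ containing an $n$-cycle and possessing subgroups of coprime indices $l$ and $m$, subject to the genus-$0$ ramification bound, must be affine or dihedral, and then reading off the corresponding polynomial shapes. This is the genuinely deep content of Ritt's result, and I would import it in the clean form proved by \cite{zan93} over an algebraically closed field under the hypothesis $g'(g^{*})'\neq 0$. Finally, to obtain the statement over an arbitrary field, and in particular a finite field, I would descend from $\overline{F}$ using \cite{sch00c}, Section~1.4, Lemma~2, which controls the field of definition of the linear polynomials $v_{1},v_{2}$ and of $w$ and $z$, so that the normal form may be taken with all data in $F$.
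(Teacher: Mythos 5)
The paper does not prove this statement at all: it is imported as a \emph{Fact}, with the proof being exactly the citation chain you end on --- Zannier's theorem over an algebraically closed field \citep{zan93}, descended to arbitrary (in particular finite) fields via \cite{sch00c}, Section~1.4, Lemma~2. So your endgame coincides with the paper's treatment, and your verification of the ``if'' direction is fine. The problem lies in the monodromy scaffolding you build around the citation, which contains a genuine error and, moreover, misattributes to \cite{zan93} something he does not prove.

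Concretely: you claim that the hypothesis $g'(g^{*})'\neq 0$ in \ref{eq:RST-2} makes the ramification above $t=\infty$ tame with cyclic inertia of order $n$. That is false. Tameness at infinity requires $p\nmid n$, and \ref{eq:RST-2} does not imply this: for instance $g=x^{p}+x$ is separable of degree $m=p$, and the paper's own normal form shows that the First Case genuinely occurs with $p\mid m$ (see \ref{eq:unidet}, which forbids only $p\mid l$; only the Second Case forces $p\nmid n$, via \ref{eq:ab-5}, and uniqueness in \ref{th:fifi} fails precisely when $p\mid m$). When $p\mid n$ the inertia group at infinity has a nontrivial wild part, Riemann--Hurwitz acquires different-degree contributions, and the dichotomy ``transitive group with an $n$-cycle and coprime-index subgroups, genus-$0$ bound $\Rightarrow$ affine or dihedral'' is exactly the step that breaks. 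This is the whole point stressed by the paper: the classical monodromy proofs (Ritt, Levi, Dorey--Whaples) needed $p>n$ or $p\nmid n$, and Zannier's contribution was a \emph{different} argument (a genus computation on the curve $g(X)-g^{*}(Y)=0$, not a group-theoretic classification) that works under the mild hypothesis \ref{eq:RST-2}. So your proposal either proves only the tame/characteristic-zero case, or it silently cites the full strength of Zannier's theorem --- in which case the function-field setup is redundant and the correct proof is the two-line citation the paper gives.
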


We have arranged collisions \ref{eq:RST-3} so that $\deg g > \deg g^*$.
The conclusion of the First Case is asymmetric in $l$ and $m$, but not in
the Second Case.

According to \ref{secDec}, we may also assume the following.
\begin{equation}\label{eq:intersec}
  f = g \circ h,\text{ and } g, h, g^{*}, h^{*} \text{ are monic original}.
\end{equation}

The following lemma about Dickson polynomials will be useful for our
normal form. We write $T_{\dg}'(x,y)=\partial T_{\dg}(x,y)/\partial x$
for the derivative with respect to $x$.
\begin{lemma}\label{Tsqfree}
  Let $F$ be a field of characteristic $p \geq 0$, $n \geq 1$, and $z
  \in F^{\times}$.
  \begin{enumerate}
  \item
    \label{Tsqfree-1}
    If $p=0$, or $p \geq 3$ and $\gcd(n,p)=1$, then the derivative
    $T'_n(x,z)$ is squarefree in $F[x]$.
  \item
    \label{Tsqfree-1b}
    If $p=0$ or $\gcd(n,p)=1$, and $n$ is odd, then there exists some
    monic squarefree $u\in F[x]$ of degree $(n-1)/2$ so that
    $T_n(x,z^{2})=(x-2z) \cdot u^{2}+2z^{\dg}$.
  \item
    \label{Tsqfree-2}
    Let $\gamma = (-y)^{\lfloor n/2 \rfloor}$.  $T_n$ is an odd
    or even polynomial in $x$ if $n$ is odd or even, respectively.  It has
    the form
    \begin{align*}
    T_n =
    \begin{cases}
      x^n - nyx^{n-2} +- \cdots + \gamma \dg x & \text{if }n\text{ is odd},\\
      x^n - nyx^{n-2} +- \cdots + 2\gamma & \text{if }n\text{ is even}.
    \end{cases}
    \end{align*}
  \item
    \label{Tsqfree-3}
    If $p \geq 2$, then $T_{p^j} = x^{p^j}$ for $j\geq 0$.
  \item \label{Tsqfree-3b} If $p \geq 2$ and $p \mid \dg$, then
    $T_{\dg}'=0$.
  \item
    \label{Tsqfree-4}
    For a new indeterminate $t$, we have
    $t^{\dg}T_{\dg}(x,y)=T_{\dg}(tx,t^{2}y)$.
  \item
    \label{Tsqfree-5}
    $T_{\dg}(2z,z^{2})=2z^{\dg}$.
  \end{enumerate}
\end{lemma}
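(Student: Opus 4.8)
The plan is to handle the seven items in increasing order of difficulty, isolating one master identity from which the delicate squarefreeness claims flow. First I would dispose of the five formal parts. The explicit sum for $T_n$ given just before the lemma shows at once that every monomial is a power $x^{n-2i}$, so $T_n$ has the parity of $n$; reading off the top coefficients and the lowest term (index $i=\lfloor n/2\rfloor$) gives \ref{Tsqfree-2}. The same sum, or an immediate induction on the recursion \ref{Trecursion}, gives the weighted homogeneity \ref{Tsqfree-4}: assigning weight $1$ to $x$ and $2$ to $y$ makes $T_n$ isobaric of weight $n$, i.e. $t^nT_n(x,y)=T_n(tx,t^2y)$. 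Substituting $x=2$, $y=1$ into the recursion shows $T_n(2,1)=2$ by induction, and combining this with \ref{Tsqfree-4} at $t=z$ yields $T_n(2z,z^2)=z^nT_n(2,1)=2z^n$, which is \ref{Tsqfree-5}. For \ref{Tsqfree-3} I would check that in characteristic $p$ every coefficient $\frac{p}{p-i}\binom{p-i}{i}$ with $1\le i\le p/2$ is divisible by $p$, so $T_p=x^p$; then the composition identity $T_{p^j}=T_{p^{j-1}}(x,y^p)\circ T_p$ and induction give $T_{p^j}=x^{p^j}$. For \ref{Tsqfree-3b}, writing $n=pm$ and using the same composition identity gives $T_n=T_m(x^p,y^p)$, a polynomial in $x^p$, whose $x$-derivative vanishes in characteristic $p$.

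The heart of the matter is a single functional equation. Introducing a new indeterminate $s$ and working in the Laurent ring over $F(y)$, the recursion \ref{Trecursion} gives, by induction,
$$
T_n(s+y/s,\,y)=s^{n}+y^{n}s^{-n}.
$$
Since $x\mapsto s+y/s$ is transcendental, it embeds $F[x]$ into the Laurent ring, so identities proved on the right transfer back to $F[x]$. Squaring and subtracting $4y^n$ gives $T_n^2-4y^n=s^{-2n}(s^{2n}-y^n)^2$ and $x^2-4y=s^{-2}(s^2-y)^2$, while differentiating $s^n+y^ns^{-n}$ in $s$ and dividing by $dx/ds$ yields $T_n'$; comparing these produces the Pell-type identity $(x^2-4y)(T_n')^2=n^2\,(T_n^2-4y^n)$. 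Differentiating it and cancelling the factor $T_n'\neq0$ (valid since $\deg T_n'=n-1$ when $n$ is a unit) gives the differential equation $(x^2-4y)T_n''+xT_n'-n^2T_n=0$.

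With these in hand I would prove \ref{Tsqfree-1} by contradiction. Suppose $T_n'(x,z)$ has a repeated root $\alpha\in\overline F$, so that $T_n'(\alpha)=T_n''(\alpha)=0$. The differential equation (specialised to $y=z$) then forces $n^2T_n(\alpha)=0$, hence $T_n(\alpha)=0$, because $\gcd(n,p)=1$ makes $n$ a unit; but substituting into the Pell identity gives $0=n^2(0-4z^n)=-4n^2z^n$, which is impossible since $z\in F^{\times}$, $n\neq0$, and---crucially---$p\neq2$. This is exactly where both hypotheses $p\geq3$ and $\gcd(n,p)=1$ are consumed, and it is the step I expect to be the main obstacle: the characteristic-$2$ exclusion is precisely what the $4z^n$ factor is guarding against, and no amount of algebra removes it here.

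Finally, for \ref{Tsqfree-1b} I would specialise the functional equation to $y=z^2$, where it factors as a perfect square: $T_n(x,z^2)-2z^n=s^{-n}(s^n-z^n)^2$ while $x-2z=s^{-1}(s-z)^2$. Dividing, and using that $n$ is odd so that $(1-n)/2\in\mathbb{Z}$, gives
$$
\frac{T_n(x,z^2)-2z^n}{x-2z}=\left(s^{(1-n)/2}\,\frac{s^n-z^n}{s-z}\right)^{2}=u^2.
$$
A short palindrome check shows the Laurent polynomial $u$ is invariant under $s\mapsto z^2/s$, hence lies in $F[x]$, and its top $s$-exponent $(n-1)/2$ shows it is monic of degree $(n-1)/2$. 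It is squarefree because $\gcd(n,p)=1$ makes $s^n-z^n$ separable, so its roots $\zeta^jz$ pair off under $s\mapsto z^2/s$ into the distinct $x$-values $z(\zeta^j+\zeta^{-j})$ for $1\le j\le(n-1)/2$. The virtue of this route is that it never divides by $2$ and so also covers $p=2$, matching the weaker hypothesis of \ref{Tsqfree-1b}.
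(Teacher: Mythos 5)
Your proposal is correct, and for the two substantive items it takes a genuinely different route from the paper. For the formal parts (\ref{Tsqfree-2})--(\ref{Tsqfree-5}) you argue, as the paper does, by induction on the recursion \ref{Trecursion} and by reading off the explicit sum, so there is nothing to compare there. The divergence is in (\ref{Tsqfree-1}) and (\ref{Tsqfree-1b}): the paper does not derive these from scratch but imports explicit factorizations from the literature --- \cite{wil71} and Corollary~3.14 of \cite{lidmul93a} for the splitting of $T_n'(x,z)$ into quadratics $x^2-\alpha_k^2z$ with Gau\ss\ periods $\alpha_k=\rho^k+\rho^{-k}$, and Theorem~3.12(i) of \cite{lidmul93} (see also \cite{tur95}) for the factorization of $T_n(x,z^2)-2z^n$ --- and then descends from an extension field to $F$, via rationality of the discriminant in (\ref{Tsqfree-1}) and via Galois invariance of $u=\prod_k(x-\alpha_k z)$ in (\ref{Tsqfree-1b}). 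You instead make the lemma self-contained: everything flows from the functional equation $T_n(s+y/s,y)=s^n+y^ns^{-n}$, which yields the Pell-type identity $(x^2-4y)(T_n')^2=n^2(T_n^2-4y^n)$ and the differential equation $(x^2-4y)T_n''+xT_n'-n^2T_n=0$; squarefreeness in (\ref{Tsqfree-1}) then falls out of a two-line contradiction at a hypothetical double root, with no roots of unity needed at all, and in (\ref{Tsqfree-1b}) the polynomial $u$ is produced directly over $F$ (invariance under the involution $s\mapsto z^2/s$ replaces the paper's Galois descent), with roots of unity entering only to check that $u$ is squarefree. Your route buys independence from the cited factorization theorems and a cleaner uniform mechanism; the paper's buys brevity and explicit knowledge of the roots. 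One small correction to your commentary: in (\ref{Tsqfree-1}) the hypothesis $p\neq 2$ is consumed not only by the factor $4z^n$ but already when you differentiate the Pell identity and cancel $2T_n'$ --- in characteristic $2$ that differentiation reads $0=0$ and the differential equation is not obtained. This is harmless, since (\ref{Tsqfree-1}) assumes $p=0$ or $p\geq 3$, but the claim that the $4z^n$ step is the only place the characteristic-$2$ exclusion enters is not accurate.
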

\begin{proof}
  \ref{Tsqfree-1} \cite{wil71} and Corollary 3.14 of \cite{lidmul93a}
  show that if $F$ contains a primitive $n$th root of unity $\rho$,
  then $T'_n(x,z)/nc$ factors over $F$ completely into a product of
  quadratic polynomials $(x^{2}-\alpha_{k}^{2}z)$, where $1\leq k
  <\dg/2$, the $\alpha_k = \rho^k+ \rho^{-k}$ are Gau\ss\ periods
  derived from $\rho$, and the $\alpha_k^2$ are pairwise distinct,
  with $c=1$ if $n$ is odd and $c=x$ otherwise. We note that
  $\alpha_{k}=\alpha_{\dg-k}$.  We take an extension field $E$ of $F$ that
  contains a primitive $n$th root of unity and a square root $z_{0}$
  of $z$. This is possible since $p=0$ or $\gcd(n,p)=1$.
  ~Thus $x^{2}- \alpha_{k}^{2}z = (x- \alpha_{k}z_{0})(x+
  \alpha_{k}z_{0})$, and the $\pm \alpha_{k}z_{0}$ for $1 \leq k < n/2$ are
  pairwise distinct, using that $p \neq 2$. It follows that
  $T'_n(x,z)$ is squarefree over $E$. Since squarefreeness is a
  rational condition, equivalent to the nonvanishing of the
  discriminant, $T'_n(x,z)$ is also squarefree over $F$.

  For \ref{Tsqfree-1b}, we take a Galois extension field~$E$ of~$F$
  that contains a primitive $n$th root of unity $\rho$, and set
  $\alpha_k = \rho^k + \rho^{-k}$ and $\beta_k = \rho^{k} - \rho^{-k}$
  for all $k\in\mathbb{Z}$.  We have $T_{n}(2z,z^{2}) = 2z^{\dg}$ by
  \ref{Tsqfree-5}, proven below, and Theorem~3.12(i) of
  \cite{lidmul93} states that
  $$
   T_{n}(x,z^{2})-2z^{\dg} = (x-2z) \prod_{1 \leq k < n/2}
   (x^{2}-2\alpha_{k}zx+4z^{2}+\beta_{k}^{2}z^{2});
  $$
  see also \cite{tur95}, Proposition 1.7. Now $ -\alpha_{k}^{2} + 4 +
  \beta_{k}^{2} = -(\rho^{k}+\rho^{-k})^{2} + (\rho^{k}-\rho^{-k})^{2}
  + 4 = 0, $ so that $ x^{2}-2 \alpha_{k}z x+4z^{2}+\beta_{k}^{2}z^{2}
  = (x-\alpha_{k}z)^{2}$. We set $u= \prod_{1 \leq k < n/2} (x-
  \alpha_{k}z)\in E[x]$. Then $T_{n}(x,z^{2})-2z^{\dg} = (x-2z)u^{2}$,
  and $u$ is squarefree. It remains to show that $u \in F[x]$. We take
  some $\sigma \in~Gal(E:F)$.  Then $\sigma(\rho)$ is also a primitive
  $\dg$th root of unity, say $\sigma(\rho)=\rho^{i}$ with $1\leq
  i<\dg$ and $~gcd(i,\dg)=1$. We take some $k$ with $1\leq k<\dg/2$,
  and $j$ with $ik\equiv j \bmod \dg$ and $0<|j| <\dg/2$. Then
  $\sigma(\alpha_{k})=\alpha_{|j|}$. Hence, $\sigma$ induces a
  permutation on $\Set{\alpha_{1}, \dots, \alpha_{(n-1)/2}}$. It
  follows that

  $$u = \prod_{1 \leq k < n/2} (x-\alpha_{k}z) = \prod_{1 \leq k < n/2}
  (x-\sigma(\alpha_{k}z)) = \sigma u.
  $$
  Since this holds for all $\sigma$, we have $u\in F[x]$.

  \ref{Tsqfree-2} follows from the recursion \ref{Trecursion}, and
  \ref{Tsqfree-3} from \cite{lidmul93a}, Lemma
  2.6(iii). \ref{Tsqfree-3b} follows from \ref{eq:Trecursion} and
  \ref{Tsqfree-3}. The claim in \ref{Tsqfree-4} is Lemma 2.6(ii) of
  \cite{lidmul93a}. It also follows inductively from \ref{Trecursion},
  as does \ref{Tsqfree-5}.
\end{proof}

 The following normal form for the
decompositions in Ritt's Second Theorem yields an exact count
of equal-degree collisions (\ref{lem:div-b}).
\begin{theorem}\label{th:fifi}
  Let $F$ be a field of characteristic $p \geq 0$, let $m>l\geq 2$ be
  integers, and $n=lm$.  For any $f, g, h,
  g^{*}, h^{*} \in F[x]$ satisfying \ref{eq:RST-1} through
  \ref{eq:intersec}, either \short\ref{th:fifi-1} or
  \short\ref{th:fifi-2} hold. 
  \begin{enumerate}
  \item\label{th:fifi-1} (First Case) There exists a monic
    polynomial $w \in F[x]$ of degree $s$ and $a \in F$ so that
    \begin{equation}\label{eq:mopo}
      f= (x^{k\ell}w^{\ell}(x^{\ell}))^{[a]}= (x-\parTwo^{kl}w^{l}(\parTwo^{l})) \circ x^{kl}w^{l}(x^{l})
      \circ (x+\parTwo),
    \end{equation}
   where $m=sl+k$ is the division with remainder of $m$ by $l$, with
   $1 \leq k < l$. Furthermore
    \begin{align}
      \label{eq:unidet-1} (g,h)& = (x^{k}w^{\ell},x^{\ell})^{[a]}\\
      & = \big((x-\parTwo^{kl}w^{l}(\parTwo^{l})) \circ x^{k}w^{l} \circ
      (x+\parTwo^{l}),(x-\parTwo^{l}) \circ x^{l} \circ
      (x+\parTwo)\big), \nonumber  \\
      \label{eq:unidet-3} (g^{*}, h ^{*}) & = (x^{\ell}, x^{k} w
      (x^{\ell}))^{[a]} = \big((x- \parTwo^{kl}w^{l}(\parTwo^{l})) \\
      & \quad \circ x^{l} \circ (x+\parTwo^{k}w(\parTwo^{l})),
      (x-\parTwo^{k}w(\parTwo^{l})) \circ x^{k}w(x^{l}) \circ
      (x+\parTwo)\big), \nonumber \\
      \label{eq:unidet} kw+lxw' &\neq 0 \text{ and } p \nmid l.
    \end{align}
 If $p\nmid m$, then
    $(w,\parTwo)$ is uniquely determined by $f$ and $l$.
  \item\label{th:fifi-2} (Second Case) There exist $z,\parTwo \in F$
    with $z \neq 0$ so that
    \begin{align}\label{eq:TN}
      f = T_{n}(x,z)^{[a]} = (x-T_{n}(\parTwo,z)) \circ T_{n}(x,z)
      \circ (x+\parTwo).
    \end{align}
    Now $(z,\parTwo)$ is uniquely determined by $f$ and $l$. Furthermore we have
    \begin{align}
      \label{eq:ab} (g,h) & =   (T_{m}(x,z^{\ell}) ,
      T_{\ell}(x,z))^{[a]} =
      \big((x-T_{n}(\parTwo,z)) \circ T_{m}(x,z^{l})\\ & \quad \circ (x +
      T_{l}(\parTwo,z)), (x-T_{l}(\parTwo,z)) \circ T_{l}(x,z)\circ
      (x+\parTwo)\big), \nonumber \\
      \label{eq:ab-3} (g^{*}, h^{*})& = (T_{\ell}(x,z^{m}),
      T_{m}(x,z))^{[a]}= \big((x-T_{n}(\parTwo,z)) \circ T_{l}(x,z^{m}) \\ &
     \quad  \circ (x+T_{m}(\parTwo,z)), (x-T_{m}(\parTwo,z)) \circ
      T_{m}(x,z) \circ (x+\parTwo)\big), \nonumber \\
     \label{eq:ab-5}  & p \nmid \dg.
    \end{align}

      \item\label{th:fifi-4}
    Conversely, any $(w,\parTwo)$ as in \short\ref{th:fifi-1} for which \ref{eq:unidet}
    holds, and any $(z,\parTwo)$ as in \short\ref{th:fifi-2} when \ref{eq:ab-5} holds,
    yields a collision satisfying \ref{eq:RST-1} through
    \ref{eq:intersec}, via the above formulas.

  \item\label{th:fifi-3} When $l \geq 3$, the First and Second Cases
    are mutually exclusive. For $l=2$, the Second Case is included in
    the First Case.
  \end{enumerate}
\end{theorem}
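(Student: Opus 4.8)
The plan is to start from Ritt's Second Theorem (\ref{RST}), which already places us, under \ref{eq:RST-1}--\ref{eq:RST-3}, in one of its two cases, but only up to the four unspecified linear polynomials $v_1,v_2$ and the linear polynomial hidden in each $\sim$. The entire content of the normal form is to show that, once we impose that $f,g,h,g^{*},h^{*}$ are monic original (\ref{eq:intersec}), all of this linear freedom collapses to a single shift $\parTwo$, with the residual scaling absorbed into $w$ (First Case) or into $z$ (Second Case).

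First I would use that equivalence preserves composition, so composing the two members of each Ritt pair gives $v_1\circ f\circ v_2=x^{kl}w_0^{l}(x^{l})$ in the First Case and, via the Dickson identity \ref{eq:Trecursion}, $v_1\circ f\circ v_2=T_{n}(x,z_0)$ in the Second Case, for some monic $w_0$ and some $z_0\in F^{\times}$. Writing $v_2^{-1}=\alpha x+\beta$ and $v_1^{-1}=\gamma x+\delta$ with $\alpha,\gamma\neq0$, I would normalize. The Second Case is clean: the scaling law in part \ref{Tsqfree-4} of \ref{Tsqfree} gives $T_{n}(\alpha x+\beta,z_0)=\alpha^{n}T_{n}(x+\parTwo,z)$ with $\parTwo=\beta/\alpha$ and $z=z_0/\alpha^{2}$, so monicity forces $\gamma\alpha^{n}=1$ and originality forces $\delta=-T_{n}(\parTwo,z)$, yielding $f=T_{n}(x,z)^{[\parTwo]}$ as in \ref{eq:TN}. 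In the First Case the inner scaling $\alpha$ is absorbed by replacing $w_0$ with the monic polynomial $w(Y)=\alpha^{-ls}w_0(\alpha^{l}Y)$ of degree $s$; monicity then pins $\gamma=\alpha^{-n}$ and originality pins $\delta$, giving $f=(x^{kl}w^{l}(x^{l}))^{[\parTwo]}$ as in \ref{eq:mopo}. The main obstacle is exactly this bookkeeping: checking that the exponent arithmetic $n=l^{2}s+lk$ makes the two monicity constraints agree, and that the $l$th root defining $w$ is forced monic, hence unique, because $p\nmid l$.

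With $f$ in normal form I would recover the component pairs by shifting the collision by $-\parTwo$, using that the shift action respects composition. This reduces the inner components to $x^{l}$ and to $x^{k}w(x^{l})$; since $A(x^{l})=B(x^{l})$ forces $A=B$, the factor with inner component $x^{l}$ gives $(g,h)=(x^{k}w^{l},x^{l})^{[\parTwo]}$ with no tameness needed (formulas \ref{eq:unidet-1}), while the factor with left component $x^{l}$ of degree $l$ is a \emph{tame} decomposition, so uniqueness of tame decompositions pins $(g^{*},h^{*})=(x^{l},x^{k}w(x^{l}))^{[\parTwo]}$ (formulas \ref{eq:unidet-3}); the Second Case is identical using \ref{eq:Trecursion} and $p\nmid n$. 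The conditions \ref{eq:unidet} and \ref{eq:ab-5} then come from \ref{eq:RST-2}: since $(g^{*})'$ is a shift of $lx^{l-1}$ we read $p\nmid l$, since $g'$ is a shift of $(x^{k}w^{l})'=x^{k-1}w^{l-1}(kw+lxw')$ we read $kw+lxw'\neq0$, and in the Second Case part \ref{Tsqfree-3b} of \ref{Tsqfree} shows $T_{m}'=0$ or $T_{l}'=0$ as soon as $p\mid m$ or $p\mid l$, forcing $p\nmid n$.

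For uniqueness I would read off two coefficients. Because $x^{kl}w^{l}(x^{l})\in F[x^{l}]$ and, by part \ref{Tsqfree-2} of \ref{Tsqfree}, $T_{n}$ is even or odd, the coefficient of $x^{n-1}$ vanishes in both base polynomials; hence the subleading coefficient of $f=P^{[\parTwo]}$ equals $n\parTwo$, and since $p\nmid l$ we have $p\nmid m\Leftrightarrow p\nmid n$, so $\parTwo$ is determined whenever $p\nmid m$. Undoing the shift recovers $P$, whence $w$ is the unique monic $l$th root of $P/x^{kl}$, and $z$ is recovered from the coefficient $-nz$ of $x^{n-2}$ in $T_{n}$; this also shows uniqueness can fail only when $p\mid m$. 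The converse \ref{th:fifi-4} is a direct check: $x^{k}w^{l}\circ x^{l}=x^{kl}w^{l}(x^{l})=x^{l}\circ x^{k}w(x^{l})$ and \ref{eq:Trecursion} supply \ref{eq:RST-3}, the shifts supply \ref{eq:intersec}, and \ref{eq:unidet} resp.\ \ref{eq:ab-5} supply \ref{eq:RST-2}. Finally, for \ref{th:fifi-3} I would compare the right component $h$, a linear invariant of the fixed collision: in the First Case $h\sim x^{l}$ has a single critical point, whereas in the Second Case part \ref{Tsqfree-1} of \ref{Tsqfree} shows $T_{l}'(x,z)$ is squarefree of degree $l-1$, so $h\sim T_{l}(x,z)$ has $l-1\geq2$ critical points when $l\geq3$, giving mutual exclusivity; for $l=2$ the identity $T_{2}(x,z)=x^{2}-2z\sim x^{2}$ together with part \ref{Tsqfree-1b} of \ref{Tsqfree}, which writes $T_{m}(x,z^{2})-2z^{m}=(x-2z)u^{2}$ with $u$ monic squarefree of degree $(m-1)/2$, exhibits the Second Case as the First Case with $k=1$ and $w=u$.
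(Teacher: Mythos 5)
Your overall strategy coincides with the paper's: start from \ref{RST}, use the monic original normalization \ref{eq:intersec} to collapse the four linear polynomials into a single shift $a$ (absorbing the scalings into $w$, resp.\ into $z$ via \ref{Tsqfree-4}), read \ref{eq:unidet} and \ref{eq:ab-5} off \ref{eq:RST-2}, prove uniqueness by comparing the coefficients of $x^{n-1}$ and $x^{n-2}$, check the converse directly, and separate the two cases by comparing derivatives of the common degree-$l$ right component. However, your proof of \ref{th:fifi-3} has a genuine gap in characteristic $2$. You invoke \ref{Tsqfree-1} to conclude that $T_l'(x,z)$ is squarefree with $l-1\geq 2$ distinct critical points; but \ref{Tsqfree-1} is stated only for $p=0$ or $p\geq 3$, and the Second Case does occur in characteristic $2$, since \ref{eq:ab-5} merely forces $l$ and $m$ to be odd. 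In fact, for $p=2$ the polynomial $T_l'(x,z)$ is a perfect square: differentiating the identity of \ref{Tsqfree-1b} (after adjoining a square root $z_0$ of $z$) gives $T_l'(x,z)=\bigl((x-2z_0)u^2+2z_0^l\bigr)'=u^2$, which has only $(l-1)/2$ distinct roots. For $l\geq 5$ your count can be repaired, since $(l-1)/2\geq 2$ still exceeds the single critical point of $h=(x^l)^{[a]}$; but for $l=3$ and $p=2$ both candidates for $h'$ have exactly one distinct root, and your argument yields no contradiction. The paper closes precisely this case by a direct computation: equating $(x+a)^3-a^3=x^3+ax^2+a^2x$ with $T_3(x+a^*,z)-T_3(a^*,z)=x^3+a^*x^2+((a^*)^2+z)x$ coefficient by coefficient forces $z=0$, contradicting $z\in F^{\times}$.

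Two smaller points. First, in the First Case your normalization $w(Y)=\alpha^{-ls}w_0(\alpha^l Y)$ tacitly assumes that the pair delivered by \ref{RST} already has $1\leq k<l$ and $\deg w_0=s$; but \ref{RST} only guarantees $k\in\mathbb{N}$ with $k+l\deg w_0=m$, so Ritt's exponent, call it $K$, may be $\geq l$ with $d=\deg w_0<s$. You must also pad: using $K=k+l(s-d)$ one has $x^{Kl}w_0^l(x^l)=x^{kl}\bigl(x^{s-d}w_0\bigr)^l(x^l)$, so the monic normal-form parameter is a scaled version of $x^{s-d}w_0$, exactly what the paper's choice $w=r^{-1}a_2^{-ld}x^{s-d}W(a_2^l x)$ accomplishes; without this step the claimed degree-$s$ polynomial $w$ and remainder $k$ are never produced. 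Second, in your reduction of the Second Case to the First Case for $l=2$, the choice $w=u$ is off: since the inner component is $T_2(x,z)=x^2-2z$ rather than $x^2$, the shift must be absorbed into $w$, namely $w=u\circ(x-2z)$, as in the paper; with $w=u$ the composition $x^2w^2(x^2)$ does not reproduce $T_m(x,z^2)\circ T_2(x,z)$. Neither of these is fatal, but the characteristic-$2$ portion of \ref{th:fifi-3} genuinely needs the extra argument.
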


\begin{proof}

   By assumption, either the First or the Second
  Case of Ritt's Second Theorem (\ref{RST}) applies.

  \short\ref{th:fifi-1} From the First Case in \ref{RST}, we have a
  positive integer $K$, linear polynomials $v_{1}$, $v_{2}$, $v_{3}$,
  $v_{4}$ and a nonzero polynomial $W$ with $d = \deg W =(m-K)/l $ and
  (renaming $v_{2}$ as $v_{2}^{-1}$)
  \begin{align*}
    x^{K}W^{l} &=  v_{1} \circ g \circ v_{3},\\
    x^{l} &= v_{3}^{-1} \circ h \circ v_{2}^{-1},\\
    x^{l} &= v_{1} \circ g^{*} \circ v_{4},\\
    x^{K}W(x^{l}) &= v_{4}^{-1} \circ h^{*} \circ v_{2}^{-1}.
  \end{align*}
  We abbreviate $r=~lc(W)$, so that $r\neq 0$, and write $v_{i} =
  a_{i} x+b_{i}$ for $1 \leq i \leq 4$ with all $a_{i}$, $b_{i}\in F$
  and $a_{i} \neq 0$, and first express $v_{3} $, $v_{4}$, and $v_{1}$
  in terms of $v_{2}$. We have
  \begin{align*}
    h &= v_{3} \circ x^{l} \circ v_{2} = a_{3}(a_{2}x+b_{2})^{l}+b_{3},\\
    h^{*} &= v_{4} \circ x^{K}W(x^{l}) \circ v_{2} =
    a_{4}(a_{2}x+b_{2})^{K} \cdot W((a_{2}x+b_{2})^{l})+b_{4}.
  \end{align*}
  Since $h$ and $h^{*}$ are monic original and $K+ld=m$, it
  follows that
  $$
  a_{3}= a_{2}^{-l}, \ b_{3}=-a_{2}^{-l}b_{2}^{l}, \ a_{4}=
  a_{2}^{-m}r^{-1}, \ b_{4}= -a_{2}^{-m}b_{2}^{K}r^{-1}W(b_{2}^{l}).
  $$
  Playing the same game with $g$, we find
  \begin{align*}
    g=v_{1}^{-1} \circ x^{K} W^{l} \circ v_{3}^{-1}&= a_{1}^{-1}
    \bigl( (\frac{x-b_{3}}{a_{3}})^{K} W^{l} (\frac{x-b_{3}}{a_{3}})
    -b_{1}
    \bigr),\\
    a_{1} &= a_{2}^{n} r^{l}, \\
    b_{1} &= b_{2}^{Kl}W^{l}(b_{2}^{l}).
  \end{align*}
  Then
  $$
  g^{*} = v_{1}^{-1} \circ x^{l} \circ v_{4}^{-1} = a_{1}^{-1} \bigl(
  (\frac{x-b_{4}}{a_{4}})^{l}-b_{1} \bigr)
  $$
  is automatically monic original. Furthermore, we have $d =
  (m-K)/l \leq \lfloor m/l \rfloor = s$ and
  \begin{equation}
    \label{adm}
    f = v_{1}^{-1} \circ (v_{1} \circ g \circ
    v_{3}) \circ (v_{3}^{-1} \circ h \circ v_{2}^{-1}) \circ v_{2}= v_{1}^{-1}
    \circ x^{Kl}\cdot W^{l}(x^{l}) \circ v_{2}.
  \end{equation}
  We set
  \begin{align*}\parTwo&=\frac{b_{2}}{a_{2}}\in F, \quad
    u_{1}=x+\frac{b_{1}}{a_{1}}=\frac{v_{1}}{a_{1}}, \quad
    u_{2}=x+\parTwo=\frac{v_{2}}{a_{2}}, \quad\\
    w&=r^{-1}a^{-ld}_{2}x^{s-d}\cdot W(a^{l}_{2}x) \in
    F[x].
  \end{align*}
  Then $b_{1}/a_{1}= a^{kl}w^{l}(a^{l})$, $w$ is monic of degree $s$,
  $ u_{1}^{-1}= x-b_{1}/a_{1}= x - \parTwo^{kl}
  w^{l}(\parTwo^{l})$, and
\begin{align}\label{eq:mondegs}
W(x)=ra^{ls}_{2}x^{-(s-d)}w(a_{2}^{-l}x).
\end{align}

   Noting that $m = ld + K =ls + k$, the equation
  analogous to \ref{adm} reads
  \begin{align}
    (x^{k\ell} w^{\ell}(x^{\ell}))^{[a]}& = u_{1}^{-1} \circ x^{kl}
    w^{l}(x^{l})\circ u_{2} \nonumber \\
    &= a_{1}\cdot v^{-1}_{1} \circ x^{kl} \cdot
    \frac{x^{l^{2}(s-d)}W^{l}(a_{2}^{l}x^{l})}{a_{2}^{dl^{2}}r^{l}}
    \circ
    \frac{v_{2}}{a_{2}} \nonumber \\
    &= v_{1}^{-1}\circ a_{2}^{\dg}r^{l}\cdot
    \bigl(\frac{v_{2}}{a_{2}}\bigr)^{kl} \cdot
    \bigl(\frac{v_{2}}{a_{2}}\bigr)^{l^{2}(s-d)} \cdot
    \frac{W^{l}(v_{2}^{l})}{a_{2}^{dl^{2}}r^{l}} \nonumber \\
    &= v_{1}^{-1}\circ x^{Kl} \cdot W^{l}(x^{l})\circ v_{2} = f.
    \label{compFirst}
 \end{align}
  This proves the existence of $w$ and $\parTwo$, as claimed in
  \ref{eq:mopo}.

  In order to express the four components in the new parameters, we
  note that $K=k+l(s-d)$. Thus
  \begin{align*}
    g &= v_{1}^{-1} \circ x^{K}W^{l} \circ v_{3}^{-1} \\
    &= (r^{-l}a_{2}^{-n} x-\parTwo^{kl}w^{l}(\parTwo^{l})) \circ
    (a_{2}^{l}(x+\parTwo^{l}))^{K} \cdot
    W^{l}(a_{2}^{l}(x+\parTwo^{l})) \\
    &= r^{-l}a_{2}^{-n} \bigl(a_{2}^{Kl}(x+\parTwo^{l})^{K} \cdot
    r^{l}a_{2}^{l^{2}s} a_{2}^{-l^{2}(s-d)}
    (x+\parTwo^{l})^{-l(s-d)} w^{l}(x+\parTwo^{l})\bigr)\\
    & \quad-\parTwo^{kl}w^{l}(\parTwo^{l}) \\
    &= a_{2}^{-n+Kl+l^{2}s-l^{2}s+l^{2}d} (x+\parTwo^{l})^{K-ls+ld}
    w^{l}(x+\parTwo^{l})
    -\parTwo^{kl}w^{l}(\parTwo^{l}) \\
    &= (x+\parTwo^{l})^{k} w^{l}(x+\parTwo^{l}) -\parTwo^{kl}
    w^{l}(\parTwo^{l})\\
    &= \bigl(x-\parTwo^{kl} w^{l}(\parTwo^{l})\bigr) \circ x^{k}w^{l} \circ
    (x+\parTwo^{l}) = (x^{k}w^{\ell})^{[a^{\ell}]}, \\ 
    h &= v_{3} \circ x^{l} \circ v_{2}
    = a_{2}^{-l}(a_{2}x+b_{2})^{l} - a_{2}^{-l}b_{2}^{l}\\
    &= (x-\parTwo^{l}) \circ x^{l} \circ (x+\parTwo) = (x^{\ell})^{[a]}, \\
    g^{*} &= v_{1}^{-1} \circ x^{l} \circ v_{4}^{-1} \\
    &= (r^{-l}a_{2}^{-n} x-\parTwo^{kl} w^{l}(\parTwo^{l})) \circ
    \bigl(ra_{2}^{m}(x+r^{-1}a_{2}^{-m} b_{2}^{K} \cdot
    W(b_{2}^{l}))\bigr)^{l} \\
    &= (x+r^{-1} a_{2}^{-m} b_{2}^{K} \cdot r a_{2}^{ls}
    b_{2}^{-l(s-d)} w(\parTwo^{l}))^{l} -
    \parTwo^{kl}w^{l}(\parTwo^{l}) \\
    &= \bigl(x+a_{2}^{-k}b_{2}^{k}w(\parTwo^{l})\bigr)^{l}
    -\parTwo^{kl}w^{l}(\parTwo^{l}) \\
    &= (x-\parTwo^{kl}w^{l}(\parTwo^{l})) \circ x^{l} \circ
    (x+\parTwo^{k}w(\parTwo^{l}))= (x^{\ell})^{[a^{k}w(a^{\ell})]},
\end{align*}
\begin{align*}
    h^{*} &= v_{4} \circ x^{K} W(x^{l}) \circ v_{2} \\
    &=\bigl(r^{-1}a_{2}^{-m} (x-b_{2}^{K} W(b_{2}^{l}))\bigr) \circ
    (a_{2}(x+\parTwo))^{K} W(a_{2}^{l}(x+\parTwo)^{l}) \\
    &= r^{-1}a_{2}^{-m} \cdot ra_{2}^{ls} \cdot \bigl((a_{2}^{K}
    (x+\parTwo)^{K} (a_{2}^{l}(x+\parTwo)^{l}))^{-(s-d)}
    w((x+\parTwo)^{l})\\
    &\quad  - b_{2}^{K} b_{2}^{-l(s-d)} w(\parTwo^{l})\bigr) \\
    &= a_{2}^{-k} \bigl(a_{2}^{K-l(s-d)} (x+\parTwo)^{K-l(s-d)}
    w((x+\parTwo)^{l})
    -b_{2}^{K-l(s-d)} w(\parTwo^{l}) \bigr)\\
    &= (x+\parTwo)^{k} w((x+\parTwo)^{l})- \parTwo^{k}w(\parTwo^{l})\\
    &= (x-\parTwo^{k}w(\parTwo^{l})) \circ x^{k}w(x^{l}) \circ
    (x+\parTwo)  = (x^{k}w(x^{\ell}))^{[\parTwo]}.
  \end{align*}
  In the right hand component $x+\parTwo$, the constant
  $\parTwo$ is arbitrary. All other linear components follow
  automatically from the required form of $g$, $h$, $g^{*}$, $h^{*}$,
  namely, being monic original, and from the condition that $g$
  and $h$ (and $g^{*}$ and $h^{*}$) have to match up with their
  ``middle'' components.  Furthermore, we have
  \begin{align}\label{eq:kw}
\begin{aligned}
         0  = g'=(x^{k-1}w^{l-1}(kw+lxw')) \circ (x+\parTwo^{l})
      &\Longleftrightarrow
      kw+ lxw'=0,\\
      0  = (g^{*})' = lx^{l-1} \circ (x+\parTwo^{k}w(\parTwo^{l}))
     &\Longleftrightarrow p \mid l.
\end{aligned}
    \end{align}

  Thus \ref{eq:unidet} follows from \ref{eq:RST-2}.

  In order to prove the uniqueness if $p\nmid m$, we take monic $w$,
  $ \tilde w \in F[x]$ of degree $s$, and $\parTwo$, $\tilde \parTwo \in F$ and
  the unique monic linear polynomials $v$ and $\tilde v$ for which
  \begin{align}\label{eq:unique}
    f = v \circ x^{kl}w^{l}(x^{l}) \circ (x+\parTwo)= \tilde v \circ x^{kl}
    \tilde{w}^{l}(x^{l}) \circ (x+\tilde \parTwo).
  \end{align}

  By composing on the left and right with $\tilde{v}^{-1}$ and
  $(x+\tilde \parTwo)^{-1}$, respectively, and abbreviating
  $u=\tilde{v}^{-1}\circ v$, we find
  \begin{align*}
    x^{kl}\tilde{w}^{l}(x^{l})&=\tilde{v}^{-1}\circ v\circ
    x^{kl}w^{l}(x^{l})\circ(x+\parTwo)\circ(x-\tilde \parTwo)\\
    &=u\circ x^{kl}w^{l}(x^{l})\circ(x+\parTwo-\tilde \parTwo).
  \end{align*}
  Since $l\geq 2$ and the left hand side is a polynomial in $x^{l}$,
  its second highest coefficient (of $x^{n-1}$) vanishes. Equating
  this with the same coefficient on the right, and abbreviating
  $a^{*}=\parTwo-\tilde \parTwo$, we find
  $$
  0=kla^{*}+sl^{2}a^{*}=na^{*},
  $$
  so that $a^{*}=0$, since $p \nmid \ell$ by \ref{eq:unidet} and hence
  $p\nmid\dg$. Thus $\parTwo=\tilde \parTwo$ and
  $$
  x^{k}\tilde{w}^{l}\circ x^{l}=x^{kl}\tilde{w}^{l}(x^{l})=u\circ
  x^{kl}w^{l}(x^{l})=u\circ x^{k}w^{l}\circ x^{l},
  $$
 $$x^k \tilde w^l =u \circ x^k w^l.$$
  Now $x^{k}\tilde w^{l}$ and $x^{k}w^{l}$ are monic original, since $k\geq 1$.
  It follows that $u=x$ and $w^l= \tilde w^l$. Both polynomials are
  monic, so that $w=\tilde w$, as claimed. (The equation
  \ref{eq:unidet-1} for $h$ determines $a$ uniquely provided that $p
  \nmid l$, even if $p \mid m$. However, the value of $h$ may not be
  unique in the latter case.)

  \short\ref{th:fifi-2} In the Second Case, again renaming $v_{2}$ as
  $v_{2}^{-1}$, and also $z$ as $z_{2}$, we have from \ref{RST}
  \begin{align*}
    T_{m}(x,z_{2}^{l}) &= v_{1} \circ g \circ v_{3},\\
    T_{l}(x,z_{2})&= v_{3}^{-1} \circ h \circ v_{2}^{-1},\\
    T_{l}(x,z_{2}^{m})&= v_{1} \circ g^{*} \circ v_{4},\\
    T_{m}(x,z_{2})&= v_{4}^{-1} \circ h^{*} \circ v_{2}^{-1},\\
    h &= v_{3} \circ T_{l}(x,z_{2}) \circ v_{2} =
    a_{3}T_{l}(a_{2}x+b_{2},z_{2})+b_{3},\\
    h^{*} &= v_{4} \circ T_{m}(x,z_{2})\circ v_{2}=
    a_{4}T_{m}(a_{2}x+b_{2},z_{2})+b_{4}.
  \end{align*}
  As before, it follows that
  $$
  a_{3}=a_{2}^{-l}, \quad b_{3}=-a_{2}^{-l}T_{l}(b_{2},z_{2}), \quad
 a_{4}=a_{2}^{-m}, \quad b_{4}=-a_{2}^{-m}T_{m}(b_{2},z_{2}).
  $$
  Furthermore, we have
  \begin{align*}
    g &= v_{1}^{-1} \circ T_{m}(x,z_{2}^{l}) \circ v_{3}^{-1} =
    a_{1}^{-1}(T_{m}(a_{3}^{-1}(x-b_{3}),z_{2}^{l})-b_{1}),\\
    a_{1}&= a_{2}^{\dg},\\
    b_{1} &= T_{m}(T_{l}(b_{2}, z_{2}),z_{2}^{l})=T_{\dg}(b_{2}, z_{2}),\\
    f &= \bigl(a_{2}^{-\dg}(x-T_{\dg}(b_{2},z_{2}))\bigr) \circ
    T_{\dg}(x,z_{2}) \circ (a_{2}x+b_{2}).
  \end{align*}

  We now set $\parTwo=b_{2}/a_{2}$ and $z=z_{2}/a_{2}^{2}$ and show that the
  preceding equation holds with $(1,\parTwo,z)$ for $(a_{2}, b_{2},
  z_{2})$. \ref{Tsqfree-4} with $t=a_{2}^{-1}$ says that
\begin{align*}
a_{2}^{-\dg}T_{\dg}(a_{2}x+b_{2}, z_{2})&=T_{\dg}(x+\parTwo,z),\\
a_{2}^{-\dg}T_{\dg}(b_{2},z_{2})&= T_{\dg}(\parTwo,z),\\
f&=(x-T_{\dg}(\parTwo,z))\circ T_{\dg}(x,z)\circ(x+\parTwo).
\end{align*}
Thus the first claim in \short\ref{th:fifi-2} holds with these
values.
In the same vein, applying \ref{Tsqfree-4} with $t$ equal to
$a_{2}^{-1}, a_{2}^{-l}, a_{2}^{-m}, a_{2}^{-1}$, respectively, yields
\begin{align*}
a_{2}^{-l}T_{l}(a_{2}x+b_{2},z_{2})&=T_{l}(x+\parTwo,z),\\
a_{2}^{-\dg}T_{m}(a_{2}^{l}x+T_{l}(b_{2},z_{2}),z_{2}^{l})&=T_{m}(x+a_{2}^{-l}T_{l}(b_{2},z_{2}),z^{l})\\
&=T_{m}(x+T_{l}(\parTwo,z),z^{l}),\\
a_{2}^{-\dg}T_{l}(a_{2}^{m}x+T_{m}(b_{2},z_{2}),z_{2}^{m})&=T_{l}(x+a_{2}^{-m}T_{m}(b_{2},z_{2}),z^{m})\\
&=T_{l}(x+T_{m}(\parTwo,z),z^{m}),\\
a_{2}^{-m}T_{m}(a_{2}x+b_{2}, z_{2})&=T_{m}(x+\parTwo,z).
\end{align*}

For the four components, we have
  \begin{align*}
    g  &= v_{1}^{-1} \circ T_{m}(x,z_{2}^{l}) \circ v_{3}^{-1}\\
    &= a_{2}^{-\dg}(x-T_{\dg}(b_{2},z_{2})) \circ T_{m}(x,z_{2}^{l})
    \circ (a_{2}^{l}x+T_{l}(b_{2},z_{2}))\\
    &=a_{2}^{-\dg}T_{m}(a_{2}^{l}x+T_{l}(b_{2},z_{2}),z_{2}^{l})
    -a_{2}^{-\dg}T_{m}(T_{l}(b_{2},z_{2}),z_{2}^{l})\\
    &=T_{m}(x+T_{l}(\parTwo,z),z^{l})-T_{\dg}(\parTwo,z)\\
    &=(x-T_{\dg}(\parTwo,z))\circ
    T_{m}(x,z^{l})\circ(x+T_{l}(\parTwo,z))= T_{m}(x,z^{\ell})^{[T_{\ell}(a,z)]},\\
    h &= v_{3} \circ T_{l}(x,z_{2}) \circ v_{2} = a_{2}^{-l}
    T_{l}(a_{2}    x+b_{2},z_{2})-a_{2}^{-l} T_{l}(b_{2},z_{2})\\
    &= a_{2}^{-l}(x-T_{l}(b_{2},z_{2})) \circ T_{l}(x,z_{2}) \circ (a_{2}x+b_{2})\\
    &=T_{l}(x+\parTwo,z)-T_{l}(\parTwo,z)\\
    &=(x-T_{l}(\parTwo,z))\circ T_{l}(x,z)\circ (x+\parTwo) =
    T_{\ell}(x,z)^{[a]},\\
    g^{*}  &= v_{1}^{-1} \circ T_{l} (x,z_{2}^{m}) \circ v_{4}^{-1}\\
    &= a_{2}^{-\dg}(x-T_{\dg}(b_{2},z_{2})) \circ T_{l}(x,z_{2}^{m})
    \circ (a_{2}^{m}x+T_{m}(b_{2},z_{2}))\\
    &=a_{2}^{-\dg}T_{l}(a_{2}^{m}x+T_{m}(b_{2},z_{2}),z_{2}^{m})-a_{2}^{-\dg}T_{\dg}
    (b_{2},z_{2})\\
    &= T_{l}(x+T_{m}(\parTwo,z),z^{m})-T_{\dg}(\parTwo,z)\\
    &=(x-T_{\dg}(\parTwo,z))\circ T_{l}(\parTwo,z^{m})\circ
    (x+T_{m}(\parTwo,z)) = T_{\ell}(a,z^{m})^{[T_{m}(a,z)]},\\
    h^{*}  &= v_{4} \circ T_{m}(x,z_{2}) \circ v_{2}\\
    &= a_{2}^{-m}(x-T_{m}(b_{2},z_{2})) \circ T_{m}(x,z_{2})\circ
    (a_{2}x+b_{2})\\
    &=a_{2}^{-m}T_{m}(a_{2}x+b_{2},z_{2})-a_{2}^{-m}T_{m}(b_{2},z_{2})\\
    &=T_{m}(x+\parTwo,z)-T_{m}(\parTwo,z)\\
    &=(x-T_{m}(\parTwo,z))\circ T_{m}(x,z)\circ (x+\parTwo) = T_{m}(x,z)^{[a]}.
  \end{align*}

  Since
  $$
  0 \neq g' = T_{m}'(x,z^{l})\circ (x+T_{l}(\parTwo,z)),
  $$
  \ref{Tsqfree-3b} implies that $p \nmid m$. Similarly, the
  non-vanishing of $(g^{*})'$ implies that $p \nmid l$, and
  \ref{eq:ab-5} follows.

  Next we claim that the representation of $f$ is unique.
So we take some $(z,\parTwo),(z^{*},\parTwo^{*})\in F^{2}$ with
$zz^{*}\neq 0$ and
\begin{align}\label{eq:klam}
(x-T_{\dg}(\parTwo,z))\circ T_{\dg}(x,z)\circ
(x+a)=(x-T_{\dg}(\parTwo^{*},z^{*}))\circ T_{\dg}(x,z^{*})\circ(x+\parTwo^{*}).
\end{align}

  Comparing the coefficients of $x^{\dg-1}$ in \ref{eq:klam} and using
  \ref{Tsqfree-2} yields $\dg \parTwo=\dg \parTwo^{*}$, hence $\parTwo=\parTwo^{*}$, since
  $p\nmid \dg$. We now compose \ref{eq:klam} with $x-\parTwo$ on the right
  and find
$$
(x-T_{\dg}(\parTwo,z))\circ T_{\dg}(x,z)=(x-T_{\dg}(\parTwo,z^{*}))\circ T_{\dg}(x,z^{*}).
$$
  Now the coefficients of $x^{\dg-2}$ yield $-\dg z=-\dg z^{*}$, so
  that $z=z^{*}$.

\short\ref{th:fifi-4}
 For the two converses, we first take some $(w,\parTwo)$ satisfying \ref{eq:unidet}
  and define $f$, $g$, $h$, $g^{*}$ and $h^{*}$ via \ref{eq:mopo} through
  \ref{eq:unidet-3}. Then \ref{eq:RST-1}, \ref{eq:RST-3}, and
  \ref{eq:intersec} hold. As to \ref{eq:RST-2}, we have $p \nmid l $
  from \ref{eq:unidet}, and hence $(g^{*})' \neq 0$. Furthermore,
  $$
  (x^{k}w^{l})'= x^{k-1}w^{l-1}\cdot (kw+lxw')\neq 0,
  $$
  so that also $g' \neq 0$.

 Furthermore, any $(z,\parTwo)$ with $z\neq 0$ and
  \ref{eq:ab-5} yields a
  collision as prescribed, since \ref{eq:ab-5} and
  \ref{Tsqfree-3b} imply that $T_{m}'(x,z^{l}) T_{l}'(x,z^{m}) \neq 0$.

  \short\ref{th:fifi-3} We first assume $l \geq 3$ and show that the
  First and Second Cases are mutually exclusive. Assume, to the
  contrary, that in our usual notation we have
  \begin{equation}
    \label{exptrig1}
    f = v_1 \circ x^{kl}w^{l}(x^{l}) \circ (x+\parTwo)=
    v_2 \circ T_n(x,z) \circ (x+\parTwo^{*}),
  \end{equation}
  where $v_{1}$ and $v_{2}$ are the unique linear polynomials that
  make the composition monic original, as specified in
  \short\ref{th:fifi-1} and \short\ref{th:fifi-2}. Then
  \begin{align*}
    f &= (v_1 \circ x^{k}w^{l}\circ (x+\parTwo^l)) \circ ((x+\parTwo)^l-\parTwo^l) \\
    &= \bigl(v_2 \circ T_{m} (x+T_l(\parTwo^{*},z),z^{l} )\bigr) \circ
    (T_l(x+\parTwo^{*},z)-T_l(\parTwo^{*},z)).
  \end{align*}

  These are two monic original decompositions of $f$, and since $p \nmid m$ by
  \ref{eq:ab-5}, 
  the uniqueness of tame decompositions implies that
  \begin{align}
    \label{exptrig3}
    h &= (x+\parTwo)^l-\parTwo^l =T_l(x+\parTwo^{*},z)-T_l(\parTwo^{*},z),\\
    h' &= l(x+\parTwo)^{l-1} =T'_l(x+\parTwo^{*},z).  \nonumber
  \end{align}

  If $p = 0$ or $p \geq 3$, then according to \ref{Tsqfree-1}, $T'_l(x,z)$
  is squarefree, while $(x+\parTwo)^{l-1}$ is not, since $l\geq 3$. This
  contradiction refutes the assumption \ref{exptrig1}.

  If $p=2$, then $l$ is odd by \ref{eq:ab-5}. After adjoining a square
  root $z_{0}$ of $z$ to $F$ (if necessary), \ref{Tsqfree-1b} implies that
  $T'_{l}(x,z)=((x-2z_{0})u^{2}+2z_{0}^{\dg})'=u^{2}$ has $(l-1)/2$ distinct roots in an
  algebraic closure of $F$, while $(x+\parTwo)^{l-1}$ has only one. This
  contradiction is sufficient for $l \geq 5$. For $l=3$, we have
  $T_{3}= x^{3}-3 yx$ and there are no $a$, $\parTwo^{*}$, $z\in F$ with $z \neq 0$
  so that
  \begin{align*}
    x^{3} + ax^{2} + a^{2}x  &= (x+a)^{3}-a^{3}= (x+\parTwo^{*})^{3} -3z
    (x+\parTwo^{*})-((\parTwo^{*})^{3} -3z\parTwo^{*})\\
    &= x^{3} + a^{*} x^{2} +((\parTwo^{*})^{2}+z)x.
  \end{align*}
  Again, \ref{exptrig1} is refuted.

  For $l=2$, we claim that any composition
  $$
  f = (x-T_{\dg}(a,z)) \circ T_{m}(x,z^{2}) \circ T_{2}(x,z) \circ (x+a)
  $$
  of the Second Case already occurs in the First Case. We have
  $2m=\dg$ and $T_{2}=x^{2}-2y$. Since $m$ is odd by \ref{eq:RST-1}
  and $p \nmid m$ by \ref{eq:ab-5}, \ref{Tsqfree-1b} guarantees a monic
  $u\in F[x]$ of degree $s = (m-1)/2$ with $T_{m}(x,z^{2})
  =T_{m}(x,(-z)^{2})= (x+2z)u^{2}-2z^{m}$. Then for $w = u
  \circ (x-2z)$ we have
  \begin{align*}
    f & = (x-T_{\dg}(a,z)) \circ ((x+2z)u^{2}-2z^{m}) \circ (x^{2}-2z)
    \circ (x+a)\\
    &= (x-T_{\dg}(a,z)  -2z^{m}) \circ (x^{2} \cdot w^{2} (x^{2}))
    \circ (x+a),
  \end{align*}
  which is of the form \ref{eq:mopo}, with $k = m-2s = 1$.
\end{proof}

The quantity in \ref{eq:unidet} is the logarithmic derivative of
$x^{k} w^{\ell}$, multiplied by $xw$.

\begin{remark}
Given just $f \in F[x]$, how can we determine whether Ritt's Second
Theorem applies to it, and if so, compute $(w,a)$ or $(z,a)$, as
appropriate? We may assume $f$ to be monic and original of degree
$n$. The divisor $l$ of $n$ might be given as a further input, or we
perform the following for all divisors $l$ of $n$ with $2 \leq l \leq
\sqrt{n}$ and $\gcd(l,n/l)=1$. If $p \nmid n$, the task is easy. We
compute, by standard methods (\cite{kozlan89}; \cite{gat90c}) decompositions
$$
f = g \circ h = g^{*} \circ h^{*}
$$
with $~deg h = ~deg g^{*}=l$ and all components monic original. If
one of these decompositions does not exist, Ritt's Second Theorem does
not apply; otherwise the components are uniquely determined. If
$h_{l-1}$ is the coefficient of $x^{l-1}$ in $h$, then $a =
h_{l-1}/l$ in \ref{eq:mopo}. Furthermore,
\begin{align*}
g(-a^{l}) &= -a^{kl}w^{l}(a^{l}),\\
g \circ (x-a^{l})-g(-a^{l})&= x^{k}w^{l},
\end{align*}
from which $w$ is easily determined via an $x$-adic Newton iteration
for extracting an $l$th root of the reversal of the left hand side,
divided by $x^{k}$.

If the Second Case applies, then by \ref{Tsqfree-2} the three highest
coefficients in $f$ are
\begin{align*}
f  &= x^{n}+f_{n-1}x^{n-1}+f_{n-2}x^{n-2}+O(x^{n-3})\\
 &= (x+a)^{n}-nz(x+a)^{n-2}+O(x^{n-4})\\
&= x^{n}+na x^{n-1}+\bigl(\frac{n(n-1)}{2}a^{2}-nz \bigr) x^{n-2}+O(x^{n-3});
\end{align*}
this determines $a$ and $z$.

The case where $f'=0$, so that $p \mid \dg$, is reduced to $f' \neq 0$
by \ref{fact} below. Finally, we are left with the situation where $f'
\neq 0$ and $p \mid \dg$. By \ref{eq:unidet} and \ref{eq:ab-5}, we are
then in the First Case with $p \mid m$. This scenario is still not
completely understood (see \citealt{gat10}), and I am not aware of an
efficient method for calculating the decompositions, if any.
\end{remark}

\begin{remark}\label{rem:opara}
  Other parametrizations are possible. As an example, in the Second
  Case, for odd $q=p$, one can choose a nonsquare
  $z_{0}\in F = \mathbb{F}_{q}$ and $B=\{1,\dots,(q-1)/2 \}$. Then all $f$
  in \ref{eq:TN} can also be written as
$$
f=b^{-\dg}(x-T_{\dg}(a,z))\circ T_{\dg}(x,z)\circ(bx+a)
$$
with unique $(z, a, b) \in \{1, z_{0}\} \times F \times B=Z$. To wit,
let $z,a \in F$ with $z\neq 0$. Take the unique $(z^{*}, a^{*}, b) \in
Z$, so that $z^{*}=b^{2}z$ and $a^{*}=ab$. Then $z^{*}$ is determined
by the quadratic character of $z$, and $b$ by the fact that every
square in $F^{\times}$ has a unique square root in $B$; the other one
is $-b\in F^{\times}\setminus B$. \ref{Tsqfree-4} says that
$$
b^{\dg}T_{\dg}(x,z)=T_{\dg}(bx,z^{*}),
$$
\begin{align*}
(x-T_{\dg}(a,z))\circ T_{\dg}(x,z)\circ
(x+a)&=b^{-\dg}(x-T_{\dg}(a^{*},z^{*}))\circ T_{\dg}(bx,z^{*})\circ(x+a)\\
&=b^{-\dg}(x-T_{\dg}(a^{*},z^{*}))\circ T_{\dg}(x,z^{*})\circ (bx+a^{*}),
\end{align*}
as claimed.
If $F$ is algebraically closed, as in \cite{zan93}, we can take
$z=1$. The reduction from finite fields to this case is provided by
\cite{sch00c}, Section 1.4, Lemma 2.
\end{remark}

\begin{remark}\label{rem:rid}
  If $p\nmid\dg$, then we can get rid of the right hand component
  $x+\parTwo$ by a further normalization. Namely, when
  $f=x^{\dg}+\sum_{{0\leq i<\dg}}{f_{i}x^{i}},$ then
  $f\circ(x+\parTwo)=x^{\dg}+(\dg \parTwo+f_{\dg-1})x^{\dg-1}+O(x^{\dg-2})$. We
  call $f$ \emph{second-normalized} if $f_{\dg-1}=0$.  (This has been
  used at least since the times of Cardano and Tartaglia.) For any
  $f,$ the composition $f\circ(x-f_{\dg-1}/\dg)$ is second-normalized,
  and if
  \begin{align}\label{eq:secnorm}
    \deg g = m\text{ and } f = g\circ
    h=x^{\dg}+mh_{\dg/m-1}x^{\dg-1}+O(x^{\dg-2})
  \end{align}
  is second-normalized, then so is $h$ (but not necessarily $g$).
\end{remark}

\begin{corollary}\label{cor:andsecond} In \ref{th:fifi}, if
  $p\nmid\dg$ and $f$ is second-normalized, then all claims hold with
  $\parTwo=0$.
\end{corollary}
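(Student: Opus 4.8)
The plan is to show that second-normalization forces the shift parameter $\parTwo$ to equal $0$, after which every assertion of \ref{th:fifi} holds with $\parTwo=0$ substituted. The key observation is that in both cases the theorem writes $f = v \circ P \circ (x+\parTwo)$, where $v$ is linear (and so contributes only a constant) and $P$ is a ``middle'' polynomial carrying \emph{no} term of degree $\dg-1$. Indeed, in the First Case $P = x^{kl}w^{l}(x^{l})$ involves only exponents divisible by $l$, and $l \nmid \dg-1$ because $l\geq 2$ divides $\dg$; in the Second Case $P = T_{\dg}(x,z)$ has the shape displayed in \ref{Tsqfree-2}, whose coefficient of $x^{\dg-1}$ vanishes. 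Writing $P = x^{\dg} + c\,x^{\dg-i} + \cdots$ with $i\geq 2$, I would compose on the right with $x+\parTwo$: the term $c(x+\parTwo)^{\dg-i}$ reaches only degree $\dg-i\leq \dg-2$, so the coefficient of $x^{\dg-1}$ comes solely from the leading $(x+\parTwo)^{\dg}$ and equals $\dg\parTwo$. The outer linear factor $v$ leaves this coefficient untouched.

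Hence the coefficient of $x^{\dg-1}$ in $f$ is $\dg\parTwo$ in either case. Since $f$ is second-normalized, this coefficient is $0$, giving $\dg\parTwo = 0$; and as $p\nmid\dg$ we conclude $\parTwo = 0$. With $\parTwo=0$ the right-hand component $x+\parTwo$ becomes the identity, and the formulas of \ref{th:fifi} hold verbatim with this value. In the First Case the middle polynomial $x^{kl}w^{l}(x^{l})$ is already original, since $k\geq 1$, so the shifts collapse entirely and \ref{eq:mopo} through \ref{eq:unidet-3} reduce to $f = x^{kl}w^{l}(x^{l})$, $(g,h)=(x^{k}w^{l},x^{l})$, and $(g^{*},h^{*})=(x^{l},x^{k}w(x^{l}))$. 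In the Second Case the analogous equations \ref{eq:TN} through \ref{eq:ab-3} hold with $\parTwo=0$. The side conditions \ref{eq:unidet} and \ref{eq:ab-5}, the converse \ref{th:fifi-4}, and the dichotomy \ref{th:fifi-3} make no mention of $\parTwo$ and so carry over unchanged; moreover, since $p\nmid\dg$ forces $p\nmid l$ and $p\nmid m$, the uniqueness assertions of \ref{th:fifi} apply and certify that the admissible shift is exactly the value just computed, $\parTwo=0$.

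The argument is essentially routine, and the only step demanding a little care is the claim that $P$ has no $x^{\dg-1}$ term: this is where the special structure of the two middle polynomials --- a polynomial in $x^{l}$ times $x^{kl}$ on one side, a Dickson polynomial on the other --- does the real work, via the divisibility observation in the First Case and \ref{Tsqfree-2} in the Second. Everything after the coefficient comparison is immediate substitution of $\parTwo=0$ into the formulas already established in \ref{th:fifi}.
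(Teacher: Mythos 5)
Your proof is correct and takes essentially the same route the paper intends: the corollary is stated as an immediate consequence of the preceding remark (\ref{rem:rid}), whose computation $f\circ(x+\parTwo)=x^{\dg}+(\dg \parTwo+f_{\dg-1})x^{\dg-1}+O(x^{\dg-2})$ is exactly the coefficient comparison you carry out. Your observation that both middle polynomials $x^{kl}w^{l}(x^{l})$ and $T_{\dg}(x,z)$ have no $x^{\dg-1}$ term (by the divisibility argument and \ref{Tsqfree-2}, respectively), so that the $x^{\dg-1}$ coefficient of $f$ equals $\dg\parTwo$ and second-normalization with $p\nmid\dg$ forces $\parTwo=0$, is precisely the intended argument.
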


For the counting results below, it is convenient to assume $F$ to be
perfect.  Then each element of $F$ has a $p$th root, where $p\geq 2$
is the characteristic. Any finite field is perfect. For any $f \in
F[x]$, we have
\begin{align*}
f \text{ is a Frobenius composition} & \Longleftrightarrow \exists g
\in F[x] \quad f=g \circ x^{p}\\
& \Longleftrightarrow \exists h \in F[x] \quad f = x^{p} \circ h.
\end{align*}
We start by making the first condition in \ref{eq:unidet} more
explicit.

\begin{lemma}\label{lem:root}
  Let $F$ be a perfect field, $l$ and $m$ positive integers
  with $\gcd(l,m)=1$, let $m=ls+k$ and $s=tp+r$ be divisions with remainder,
  so that $1 \leq k < l$ and $0 \leq r < p$,
 and $w \in F[x]$ monic of degree $s$. Then
  \begin{equation}\label{eq:lxw}
   p \nmid l  \text{ and } kw + lxw'= 0  \Longleftrightarrow p \mid m \text{
      and } \exists u \in F[x] \text{ monic} \quad w=x^{r} u^{p}.
  \end{equation}
If the conditions in \ref{eq:lxw} are satisfied, then $u$ is uniquely
determined.
\end{lemma}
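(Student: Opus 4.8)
The plan is to prove the equivalence by directly comparing coefficients in the identity $kw + lxw' = 0$. I would write $w = \sum_{0 \le i \le s} w_i x^i$ with $w_s = 1$, so that $xw' = \sum_i i w_i x^i$ and the coefficient of $x^i$ in $kw + lxw'$ is $(k + li)w_i$. Hence the left-hand condition $kw + lxw' = 0$ is equivalent to $(k+li)w_i = 0$ in $F$ for every $i$ with $0 \le i \le s$, and the whole lemma reduces to understanding which coefficients $w_i$ are forced to vanish.

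For the forward implication I would assume $p \nmid l$ together with $kw + lxw' = 0$. Reading the coefficient condition at the top degree $i = s$, where $w_s = 1 \ne 0$, forces $k + ls = 0$ in $F$; since $k + ls = m$, this is precisely $p \mid m$. For a general index, $w_i \ne 0$ can only occur when $k + li \equiv 0 \pmod p$, and because $p \nmid l$ makes $l$ invertible modulo $p$, this pins down $i \equiv s \equiv r \pmod p$, using $m = ls + k \equiv 0$ and $s \equiv r \pmod p$. Thus every nonzero term of $w$ has exponent $\equiv r \pmod p$, so $w = x^r \sum_{0 \le j \le t} w_{r+jp}(x^p)^j$. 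Here perfectness of $F$ enters: each coefficient $w_{r+jp}$ has a unique $p$th root, and the identity $(\sum_j a_j x^j)^p = \sum_j a_j^p x^{jp}$ collapses the inner sum into $u^p$ for the monic $u = \sum_j w_{r+jp}^{1/p} x^j$, giving $w = x^r u^p$.

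For the converse I would assume $p \mid m$ and $w = x^r u^p$. First, $\gcd(l,m) = 1$ with $p \mid m$ immediately yields $p \nmid l$. Then, since $(u^p)' = p u^{p-1} u' = 0$ in characteristic $p$, differentiation gives $w' = r x^{r-1} u^p$, hence $xw' = rw$ and $kw + lxw' = (k + lr)w$. Because $m = l(tp+r) + k \equiv lr + k \pmod p$ and $p \mid m$, the scalar $k + lr$ vanishes in $F$, so $kw + lxw' = 0$. Uniqueness of $u$ then follows from injectivity of the Frobenius on the domain $F[x]$: if $x^r u^p = x^r \tilde u^p$, then $(u - \tilde u)^p = u^p - \tilde u^p = 0$, whence $u = \tilde u$.

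I expect the only genuinely delicate point to be the forward direction, and specifically the two facts that together force the shape $x^r u^p$: confining the support of $w$ to the single residue class $r \bmod p$ (which relies on $p \nmid l$ so that $l$ is invertible modulo $p$) and then invoking perfectness to extract the $p$th power. The backward direction and the uniqueness are routine characteristic-$p$ calculations that hinge only on $(u^p)' = 0$ and the injectivity of Frobenius.
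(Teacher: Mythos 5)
Your proof is correct, and it is a more direct execution than the paper's, so a brief comparison is worthwhile. Your key simplification is the observation that the map $w \mapsto kw + lxw'$ acts diagonally on the monomial basis: the coefficient of $x^i$ is $(k+li)w_i$, so the hypothesis $kw+lxw'=0$ is equivalent to ``$p \mid k+li$ whenever $w_i \neq 0$''; reading this at $i=s$ gives $p \mid m$, and combined with the invertibility of $l$ modulo $p$ it confines the support of $w$ to the residue class of $r$ modulo $p$ in one stroke. The paper proves the same two facts in two separate stages: it gets $p\mid m$ from the leading coefficient of $kw+lxw'$, then differentiates the relation repeatedly to obtain $(k+il)w^{(i)} + lxw^{(i+1)}=0$, hence $(k+il)w^{(i)}(0)=0$, and checks $p \nmid k+il$ for $0 \le i < r$ to conclude $x^r \mid w$ (implicitly using $i! \neq 0$ in $F$ for $i < r < p$); it then computes that $v = x^{-r}w$ satisfies $lv'=0$, so $v'=0$, and invokes perfectness in the form ``a polynomial with vanishing derivative over a perfect field is a $p$th power''. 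Your coefficientwise extraction of $p$th roots is precisely the proof of that last standard fact, so the two arguments rest on the same substance; yours trades the induction on higher derivatives and the separate logarithmic-derivative computation for a single transparent identity, at no loss of generality. Your converse and uniqueness arguments (the eigenvalue $k+lr$ vanishes modulo $p$, and injectivity of the Frobenius on $F[x]$) coincide with the paper's.
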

\begin{proof}
  For ``$\Longrightarrow$'', we denote by $w^{(i)}$ the $i$th
  derivative of $w$. By induction on $i\geq 0$, we find that
  \begin{align*}
    (k+il) w^{(i)}+ lxw^{(i+1)}  &= 0,\\
    (k+il) w^{(i)}(0) &= 0.
  \end{align*}
  Now $p \nmid s-i$ for $0 \leq i < r$, $p \mid m = k+ls= ~lc
  (kw+lxw')$, and $p \nmid l$.  Thus
  $$
  p \nmid m -(s-i)l=k+ls-ls+il=k+il
  $$
  for $0 \leq i < r$, and hence $w^{(i)}(0)= 0$ for these $i$.  Since
  $r < p$, this implies that the lowest $r$ coefficients of $w$
  vanish, so that $x^{r}\mid w$ and $v = x^{-r}w \in F[x]$. Then
  \begin{align*}
    lv'&= l(-rx^{-r-1}w+x^{-r}w')= x^{-r-1}(-lrw-kw)\\
    &= -x^{-r-1}w\cdot(lr+k)= -x^{-r-1}w \cdot(m-l(s-r))=0.
  \end{align*}
  This implies that $v'=0$ and $v = u^{p}$ for some
  $u\in F[x]$, since $F$ is perfect.

  For ``$\Longleftarrow$'', $p\nmid l$ follows from $\gcd(l,m)=1$, and
  we verify
  \begin{align*}
    kw + lxw' &= kx^{r}u^{p}+ lx \cdot rx^{r-1} u^{p} =
    x^{r} u^{p}(k+lr)\\
    &= w \cdot (m-l(s-r))=0.
  \end{align*}
The uniqueness of $u$ is immediate, since $x^{r}u^{p}= x^{r}
\tilde{u}^{p}$ implies $u = \tilde{u}$.
\end{proof}

We can now estimate the number of non-Frobenius distinct-degree
collisions. If $p\nmid m$, the bound is exact. We use Kronecker's $\delta$ in
the statement.
\begin{theorem}\label{lem:div-b}
  Let $\mathbb{F}_{q}$ be a finite field of characteristic $p$, let
  $l$ and $m$ be integers with $m>l\geq 2$ and $\gcd(l,m)=1$, $n=lm$,
  $s= \lfloor m/l \rfloor$, and $t =\#(D_{\dg, l} \cap D_{\dg,m}  \setminus
  \mathbb{F}_{q}[x^p] )$.  Then the following hold.
  \begin{enumerate}
  \item\label{lem:div-1/2} \label{lem:div-1} If $p \nmid \dg$, then
    $$
    t = q^{s+1}+ (1-\delta_{l,2})
    (q^{2}-q),
    $$
    $$
    q^{s+1}\leq t < q^{s+1}+ q^{2}.
    $$
  \item\label{lem:div-1/4} If $p \mid l$, then $t=0$.
  \item\label{lem:div-1/3} If $p \mid m$, then
    $$
    t \leq q^{s+1}-q^{\lfloor s/p\rfloor +1}.
    $$
  \end{enumerate}
\end{theorem}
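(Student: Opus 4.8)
The plan is to reduce the computation of $t$ to a count of the admissible parameters furnished by the normal form \ref{th:fifi}, and then enumerate them in each of the three cases. The starting observation is that $D_{\dg,l}\cap D_{\dg,m}$ is exactly the set of $f\in P_{\dg}$ admitting a collision $g\circ h=g^{*}\circ h^{*}$ with $\deg g=\deg h^{*}=m$ and $\deg h=\deg g^{*}=l$, since membership in $D_{\dg,m}$ (respectively $D_{\dg,l}$) supplies the monic original left factor of degree $m$ (respectively $l$). The crucial point is that deleting $\mathbb{F}_{q}[x^{p}]$ is precisely what forces the derivative hypothesis \ref{eq:RST-2}: for $f\notin\mathbb{F}_{q}[x^{p}]$ we have $f'\neq 0$, and since $f'=(g'\circ h)\cdot h'$ this gives $g'\neq 0$, and symmetrically $(g^{*})'\neq 0$, so $g'(g^{*})'\neq 0$ and \ref{th:fifi} applies to every $f$ counted by $t$. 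Conversely I would verify that each admissible parameter pair yields a non-Frobenius $f$: in the First Case $p\nmid l$ gives $h'\neq 0$ while $kw+lxw'\neq 0$ gives $g'\neq 0$ via \ref{eq:kw}, so $f'\neq 0$; in the Second Case $p\nmid\dg$ gives the same nonvanishing through \ref{Tsqfree-3b}. Thus $t$ equals the number of distinct $f$ produced by the First and Second Cases.

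Next I would count the two parameter families. A First-Case $f$ is determined by a monic $w$ of degree $s$ and some $a\in F$ subject to \ref{eq:unidet}, giving at most $q^{s}\cdot q=q^{s+1}$ pairs; when $p\nmid m$ the pair $(w,a)$ is unique by \ref{th:fifi-1}, so the assignment $(w,a)\mapsto f$ is injective. A Second-Case $f$ is determined by $(z,a)$ with $z\neq 0$ and $p\nmid\dg$, which by the uniqueness in \ref{th:fifi-2} gives exactly $(q-1)\cdot q=q^{2}-q$ distinct values. To assemble $t$ I invoke \ref{th:fifi-3}: for $l\geq 3$ the two families are disjoint and the counts add, whereas for $l=2$ the Second Case is absorbed into the First and contributes nothing new. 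This is exactly the factor $1-\delta_{l,2}$ multiplying $q^{2}-q$.

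I would then dispatch the three cases. For \ref{lem:div-1} ($p\nmid\dg$) we have $p\nmid l$ and $p\nmid m$, so by \ref{lem:root} the equation $kw+lxw'=0$ is impossible; hence all $q^{s+1}$ First-Case pairs are admissible and give distinct $f$, and the $q^{2}-q$ Second-Case values are available, yielding $t=q^{s+1}+(1-\delta_{l,2})(q^{2}-q)$, with the stated bounds following from $0\leq q^{2}-q<q^{2}$. For \ref{lem:div-1/4} ($p\mid l$) the First Case needs $p\nmid l$ by \ref{eq:unidet} and the Second needs $p\nmid\dg$ by \ref{eq:ab-5}, so neither can occur; there is no non-Frobenius collision and $t=0$. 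For \ref{lem:div-1/3} ($p\mid m$, hence $p\nmid l$ and $p\mid\dg$) the Second Case is excluded by \ref{eq:ab-5}, leaving only the First Case; here \ref{lem:root} identifies the inadmissible $w$ as exactly $w=x^{r}u^{p}$ with $u$ monic of degree $\lfloor s/p\rfloor$, numbering $q^{\lfloor s/p\rfloor}$, so the admissible pairs number $q^{s+1}-q^{\lfloor s/p\rfloor+1}$.

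The main obstacle is this last case: since $p\mid m$, the uniqueness clause of \ref{th:fifi-1} fails, so the parameter-to-$f$ map is only guaranteed surjective (by the existence assertion of \ref{th:fifi-1}), not injective, onto the counted set. This is exactly why \ref{lem:div-1/3} yields only the upper bound $t\leq q^{s+1}-q^{\lfloor s/p\rfloor+1}$ rather than an equality. The remaining work is bookkeeping: confirming through \ref{lem:root} (with $s=tp+r$) that the excluded $w$ are enumerated by monic $u$ of degree $\lfloor s/p\rfloor$, and correctly merging the two families via the $l=2$ inclusion in \ref{th:fifi-3}.
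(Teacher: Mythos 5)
Your proposal is correct and takes essentially the same route as the paper's own proof: both parametrize the non-Frobenius collisions through the normal form of \ref{th:fifi}, count $q^{s+1}$ admissible First-Case pairs $(w,a)$ and $q^{2}-q$ Second-Case pairs $(z,a)$, combine them via the exclusion/inclusion statement \ref{th:fifi-3} (giving the factor $1-\delta_{l,2}$), and in the case $p\mid m$ use \ref{lem:root} to count the inadmissible $w=x^{r}u^{p}$ while observing that the loss of uniqueness in \ref{th:fifi-1} downgrades the equality to the upper bound $t\leq q^{s+1}-q^{\lfloor s/p\rfloor+1}$. The only cosmetic difference is that in case \ref{lem:div-1} the paper verifies \ref{eq:unidet} directly from the fact that the leading coefficient of $kw+lxw'$ equals $m\not\equiv 0 \bmod p$, whereas you route the same conclusion through \ref{lem:root}; both arguments are valid.
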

\begin{proof}
  \short\ref{lem:div-1/2} The monic original polynomials $f \in
 D_{\dg,l} \cap D_{\dg,m}  \setminus
  \mathbb{F}_{q}[x^p]  =T$ fall either into the
  First or the Second Case of Ritt's Second Theorem. In the First
  Case, such $f$ are injectively parametrized by $(w,\parTwo)$ in
  \ref{th:fifi-1}.  Condition \ref{eq:unidet} is satisfied, since $p
  \nmid m= k+ls =~lc(kw+lxw')$. Thus there are $q^{s+1}$ such pairs.
  In the Second Case, we have the parameters $(z,\parTwo), q^{2}-q$ in
  number, from \ref{th:fifi-2}. Furthermore, \ref{th:fifi-3} says that
  $t$ equals the sum of the two contributions if $l \geq 3$, and it
  equals the first summand for $l=2$; in the latter case, we have
  $p\neq 2$. Both claims in \short\ref{lem:div-1} follow.

  \short\ref{lem:div-1/4} \ref{eq:unidet} and \ref{eq:ab-5} are never
  satisfied, so that $t=0$.

  \short\ref{lem:div-1/3} We have essentially the same situation as in
  \short\ref{lem:div-1/2}, with $p \nmid l$ and $(w,\parTwo)$
  parametrizing our $f$ in the First Case, albeit not
  injectively. Thus we only obtain an upper bound. The first condition
  in \ref{eq:unidet} holds if and only if $w$ is not of the form
  $x^{r} u^{p}$ as in \ref{eq:lxw}. We note that $\deg u= (s-r)/p =
  \lfloor s/p \rfloor$ in \ref{eq:lxw}, so that the number of
  $(w,\parTwo)$ satisfying \ref{eq:unidet} equals $q^{s+1}-q^{\lfloor
    s/p\rfloor+1}$. Since $p \mid m \mid \dg$, \ref{eq:ab-5} does not
  hold, and there is no non-Frobenius decomposition in the Second
  Case.\qed
\end{proof}

This shows \short\ref{lem:div-1}, \short\ref{lem:div-1/4}, and
\short\ref{lem:div-1/3} in \ref{tab:cor-table}.

\begin{example}
  We note two instances of misreading Ritt's Second Theorem.
  \cite{boddeb09} claim in the proof of their Lemma 5.8 that $t \leq
  q^{5}$ in the situation of \ref{lem:div-1}. This contradicts the
  fact that the exponent $s+1$ of $q$ is unbounded.

  A second instance is in \cite{cor90}. The author claims that her
  following example contradicts the Theorem. She takes (in our
  language) positive integers $b$, $c$, $d$, $t$ and elements $h_{0},
  \ldots, h_{t} \in F$ and sets $m = bp^{c}+d$ and $l=p^{c}+1$, where
  $c < p$, $tl \leq m$, and $F$ is a field of characteristic $p
  >0$. Then for
\begin{align*}
h & = \sum_{0 \leq i \leq t}h_{i}x^{m-il},\\
g^{*} & = \sum_{o \leq i, j \leq t} h_{i}h_{j}x^{m-ip^{c}-j}.
\end{align*}
we have
$$
x^{l} \circ h = g^{*} \circ x^{l},
$$
provided that all $h_{i}$ are in $\mathbb{F}_{p^{n}}$. If $d > b$, we
have $m=bl+(d-b)$, so that $s=b$ and $k=d-b$.
Applying \ref{th:fifi}, we find $w = \sum_{0 \leq i \leq
  t}h_{i}x^{b-i}$ and $a=0$. Then
\begin{align*}
h & = x^{k}w(x^{l}),\\
g^{*}& = x^{k}w^{l}.
\end{align*}
Thus the example falls well within Ritt's Second Theorem. \cite{zan93}
points out that this was also remarked by Andrzej Kondracki, a student of
Andrzej Schinzel.
\end{example}

\section{Reducing vanishing to nonvanishing derivatives}

A particular strength of Zannier's and Schinzel's result in \ref{RST}
is that, contrary to earlier versions, the characteristic of $F$
appears only very mildly, namely in \ref{eq:RST-2}. We now elucidate
the case excluded by \ref{eq:RST-2}, namely $g'(g^{*})'=0$.
\cite{zan93} proves his version (Fact 3.3) of Ritt's Second Theorem
under the assumption \ref{eq:RST-2}. On his page 178, he writes ``We
finally remark that it is easy to obtain a version of [his] Theorem 1
valid also in case $g'$, say, vanishes. [...] Since however this
extension is quite straightforward we have decided not to discuss it
here in full detail, also in order not to complicate further the
already involved statement.''

The following executes this extension in the normal form. The special
case is reduced to the situation where both derivatives are nonzero in
\ref{fact}. True to Zannier's words, its statement is involved, and
the short version is: if \ref{eq:RST-2} is violated, remove the
component $x^{p}$ from the culprit as long as you can. Then
\ref{th:fifi} applies. We start with simple facts about $p$th powers.

\begin{lemma}\label{lem:propdivi}
  Let $F = \mathbb{F}_{q}$ be a finite field of characteristic $p$,
  let $l$, $m \geq 2$ be integers for which $p$ divides $n= lm$, and
  let $g$ and $h$ in $F[x]$ have degrees $l$ and $m$,
  respectively. Then the following hold.
  \begin{enumerate}
  \item\label{lem:propdivi-1} $g \circ h \in F[x^{p}]
    \Longleftrightarrow g'h'=0 \Longleftrightarrow g \in
    F[x^{p}]$ or $h \in F[x^{p}]$,
  \item\label{lem:propdivi-2} $\#(D_{\dg} \cap F[x^p])  = q^{\dg/p-1}$,
  \item\label{lem:propdivi-3}
    \begin{equation*}
      \#D_{\dg,l} \cap F[x^p] =
            \begin{cases}
        \#D_{\dg/p,l/p} & \text{if } p \mid l,\\
        \#D_{\dg/p,l} & \text{if } p \mid m.
      \end{cases}
    \end{equation*}
  \end{enumerate}
\end{lemma}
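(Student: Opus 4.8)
The plan is to prove the three parts in order, letting \ref{lem:propdivi-1} do the structural work on which the two counting statements \ref{lem:propdivi-2} and \ref{lem:propdivi-3} then rest. Throughout I use that $F=\mathbb{F}_q$ is perfect, so that over $F$ membership in $F[x^p]$ is the same as being a $p$th power, and that \ref{eq:frob} lets me commute the factor $x^p$ past a component.

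For \ref{lem:propdivi-1} I would argue purely by the chain rule in the integral domain $F[x]$. Writing $f=g\circ h$, we have $f'=(g'\circ h)\cdot h'$, and in characteristic $p$ a polynomial lies in $F[x^p]$ precisely when its derivative vanishes, so $f\in F[x^p]\iff f'=0\iff (g'\circ h)\,h'=0$. Since $h$ is nonconstant ($\deg h=m\geq 2$), $g'\circ h=0$ forces $g'=0$; hence the product vanishes iff $g'=0$ or $h'=0$, i.e. iff $g'h'=0$, which is in turn equivalent to $g\in F[x^p]$ or $h\in F[x^p]$. This yields the whole chain of equivalences. (The hypothesis $p\mid n$ is not needed for the equivalence itself; it only guarantees that the conditions can actually occur.)

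For \ref{lem:propdivi-2} the point is that every $f\in P_n\cap F[x^p]$ is automatically decomposable: writing $f=\bar f\circ x^p$ with $\bar f\in P_{n/p}$, the two components have degrees $p\geq 2$ and $n/p\geq 2$ (the latter because $p\mid lm$ with $l,m\geq 2$ forces $n/p\geq 2$), so $f\in D_n$. Therefore $D_n\cap F[x^p]=P_n\cap F[x^p]$, and I finish by counting coefficients: a monic original $f=\sum_{1\leq i\leq n/p}c_i x^{pi}$ with $c_{n/p}=1$ has exactly the $n/p-1$ free parameters $c_1,\dots,c_{n/p-1}\in\mathbb{F}_q$, giving $q^{n/p-1}$.

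The heart of the lemma is \ref{lem:propdivi-3}, which I would prove by exhibiting the bijection $\bar f\mapsto \bar f\circ x^p$ from the claimed source set onto $D_{n,l}\cap F[x^p]$; injectivity is just right-cancellation of $x^p$. Given $f\in D_{n,l}\cap F[x^p]$, write $f=g\circ h$ with $\deg g=l$, $\deg h=m$ monic original, and $f=\bar f\circ x^p$ with $\bar f\in P_{n/p}$. By \ref{lem:propdivi-1}, $g\in F[x^p]$ or $h\in F[x^p]$, and which one is forced depends on the case. When $p\mid m$ (so $p\nmid l$ and $g\notin F[x^p]$), only $h\in F[x^p]$ is possible; writing $h=\bar h\circ x^p$ gives $\bar f=g\circ\bar h$ with $\deg g=l$ and $\deg\bar h=m/p=(n/p)/l$, so $\bar f\in D_{n/p,l}$, and conversely $\bar f\circ x^p=\bar g\circ(\bar h\circ x^p)\in D_{n,l}$. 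When $p\mid l$ (so $p\nmid m$ and $h\notin F[x^p]$), only $g\in F[x^p]$ is possible; writing $g=\bar g\circ x^p$ and commuting the Frobenius past $h$ via \ref{eq:frob} gives $f=\bar g\circ\varphi_1(h)\circ x^p$, whence $\bar f=\bar g\circ\varphi_1(h)\in D_{n/p,l/p}$, and the reverse inclusion reads $\bar f\circ x^p=(\bar g\circ x^p)\circ\varphi_{-1}(\bar h)$.

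The main obstacle I anticipate is the bookkeeping in the $p\mid l$ case of \ref{lem:propdivi-3}: one must slide the factor $x^p$ across the degree-$m$ component through the Frobenius twist of \ref{eq:frob}, a step that genuinely uses perfectness of $F$, and keep careful track of which side carries the $p$th-power structure. A secondary point to settle is that the two cases of \ref{lem:propdivi-3} are to be read as disjoint, as they are whenever $p$ divides exactly one of $l,m$; the boundary situations, such as $l=p$ where $D_{n/p,l/p}=D_{n/p,1}=P_{n/p}$, are consistent with the count in \ref{lem:propdivi-2} and create no conflict.
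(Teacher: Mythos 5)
Your proof is correct and takes essentially the same route as the paper's: part \ref{lem:propdivi-1} via the chain rule (which the paper simply declares clear), part \ref{lem:propdivi-2} via the bijection $\bar f \mapsto \bar f \circ x^{p}$ with $P_{n/p}$ (your coefficient count is equivalent), and part \ref{lem:propdivi-3} by sliding the factor $x^{p}$ across a component using \ref{eq:frob} and perfectness of $\mathbb{F}_{q}$, exactly as the paper does. The caveat you flag at the end is the right one to worry about: when $p \mid \gcd(l,m)$ your forced dichotomy (``$p\mid m$, so $p \nmid l$'') breaks down, but the paper's own proof makes the identical implicit assumption --- its ``unique rewriting'' as $x^{p}\circ g^{*}\circ h$ likewise only handles the subcase where the left component lies in $F[x^{p}]$ --- so your treatment matches the paper's in both substance and limitation.
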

\begin{proof}
  \ref{lem:propdivi-1} is clear. For \ref{lem:propdivi-2}, all monic original
  Frobenius compositions are of the form $g \circ x^{p}$ with
  $g \in P_{\dg/p}$, and $g$ is uniquely determined by the
  composition. In \ref{lem:propdivi-3}, if  $p \mid
  l$ and according to \ref{eq:frob}, any $g \circ h \in
     \#D_{\dg,l} \cap F[x^p] $ can be uniquely rewritten as $x^p \circ g^* \circ h$
      with $g^{*} \in P_{l/p}$ and $h\in P_m$. If $p \mid m$, then the
  corresponding argument works.
  In this lemma, we use the otherwise undefined $D_{p,p} = \{x\}$
  (when $p = l = m$) and $D_{n/p,1} = P_{n/p}$ (when $p=l$).\hspace{-2ex}~
  \footnote{The lemma corrects the statements of the same lemma in the version
 published in \emph{Finite Fields and Their Applications.}}
  \end{proof}

We recall the Frobenius power $\varphi_{j} \colon F[x]\rightarrow
F[x]$ from \ref{rem:coll}.
\begin{lemma}\label{fact}
  In the above notation, assume that $(l,m,g,h,g^{*},h^{*})$ and $f$
  satisfy \ref{eq:RST-1}, \ref{eq:RST-3}, and \ref{eq:intersec}, and
  that $F$ is perfect.
  \begin{enumerate}
  \item\label{fact-0} The following are equivalent:
    \begin{enumerate}
    \item\label{fact-0-a} $f $ is a Frobenius composition,
    \item\label{fact-0-b} $ f' = 0$,
    \item $g'(g^{*})' = 0$,
    \item $g'h'(g^{*})'(h^{*})'= 0$.
    \end{enumerate}
  \item\label{fact-1} If $g'=0$, then $p \nmid l$ and $(g^{*})' \neq
    0$, and there exist positive integers $j$ and
    $M$, and monic original $G$, $G^{*}$, $H^{*} \in F[x]$ so that
    \begin{align}\label{al:fact}
      \begin{aligned}
        & m =p^{j}M, \deg G = \deg H^{*} = M, \deg G^{*}=l,\\
       & g= x^{p^{j}} \circ G, g^{*} \circ x^{p^{j}} = x^{p^{j}} \circ
       G^{*}, h^{*} = x^{p^{j}} \circ H^{*},\\
        & G' (G^{*})' \neq 0, G \circ h=G^{*} \circ H^{*},\\
        & f = x^{p^{j}} \circ G \circ h = x^{p^{j}} \circ (G^{*} \circ
        H^{*}).
      \end{aligned}
    \end{align}
    In particular, $(l, M, G, h, G^{*}, H^{*})$ satisfies
    \ref{eq:RST-1} through \ref{eq:RST-3} if $M > l$, and $(M,l,G^{*},
    H^{*}, G,h)$ does if $2 \leq M < l$. If $M = 1$, then $G$ and
    $H^{*}$ are linear.
  \item\label{fact-2} If $(g^{*})' = 0$, then $p \nmid m$ and $g' \neq
    0$, and there exist positive integers $d$ and $L$, and monic
    original $G,H, G^{*} \in F[x]$ with
    \begin{align}\label{eq:fgx}
      l&=p^{d}L, p \nmid L, g = \varphi_{d}(G), h = x^{p^{d}}\circ H,
      g^{*}=
      x^{p^{d}} \circ G^{*},\nonumber\\
      G'(G^{*})' &\neq 0, G \circ H = G^{*} \circ h^{*}, f =
      x^{p^{d}} \circ G \circ H.\nonumber
    \end{align}
    In particular, $(L,m, G, H, G^{*}, h^{*})$ satisfies
    \ref{eq:RST-1} through \ref{eq:RST-3} if $L \geq 2$.
  \item\label{fact-4} The data derived in \short\ref{fact-1} and
    \short\ref{fact-2} are uniquely determined. Conversely, given such
    data, the stated formulas yield $(l,m,g,h,g^{*}, h^{*})$ and $f$
    that satisfy \ref{eq:RST-1}, \ref{eq:RST-3}, and \ref{eq:intersec}.
  \end{enumerate}
\end{lemma}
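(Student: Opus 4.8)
The plan is to prove the four parts in turn, with the whole argument resting on three tools: the chain rule, the uniqueness of the \emph{maximal left Frobenius extraction} of a polynomial (the largest $p^{e}$ with $f\in x^{p^{e}}\circ\F[x]$), and the exchange rule \ref{eq:frob} together with \ref{th:fifi} itself. For \ref{fact-0} I would first dispose of the case $p=0$ or $p\nmid\dg$, where all four conditions are plainly false (the leading coefficient $\dg$ of $f'$ is nonzero, and there is no Frobenius composition). So assume $p\mid\dg$; since $\gcd(l,m)=1$, $p$ divides exactly one of $l,m$. The equivalence (a)$\Leftrightarrow$(b) is immediate: over a perfect field $f\in\F[x^{p}]$ iff $f'=0$, and $f\in\F[x^{p}]$ iff $f=(h^{*})^{p}=x^{p}\circ h^{*}$ by the freshman's dream and extraction of $p$th roots of the coefficients. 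For the rest I would apply \ref{lem:propdivi} to each of the two decompositions $f=g\circ h=g^{*}\circ h^{*}$, giving $f'=0\Leftrightarrow g'h'=0\Leftrightarrow(g^{*})'(h^{*})'=0$; since the component whose degree is prime to $p$ has nonvanishing derivative, the harmless factor drops and (c) is read off, while (d) is equivalent because the fourfold product vanishes iff one of the two pair-products does.

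For \ref{fact-1}, assuming $g'=0$, the degree $m$ of $g$ must be divisible by $p$, so $p\nmid l$ and hence $(g^{*})'\neq0$ and $h'\neq0$; the chain rule applied to $f=g^{*}\circ h^{*}$ then forces $(h^{*})'=0$. I would extract the maximal power $x^{p^{j}}$ from the left of $g$, writing $g=x^{p^{j}}\circ G$ with $G\notin\F[x^{p}]$ and $M=\deg G=m/p^{j}$, and likewise $h^{*}=x^{p^{j''}}\circ H^{*}$. Pushing the extracted power of $f=g^{*}\circ h^{*}$ to the left via \ref{eq:frob} and observing that the remaining factor $\varphi_{-j''}(g^{*})\circ H^{*}$ has nonvanishing derivative (because $p\nmid l$), both $g$ and $h^{*}$ produce the \emph{same} maximal left extraction of $f$; uniqueness of that extraction gives $j''=j$. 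Setting $G^{*}=\varphi_{-j}(g^{*})$ and cancelling $x^{p^{j}}$ on the left of $f$ yields the reduced identity $G\circ h=G^{*}\circ H^{*}$ of \ref{al:fact} with $G'(G^{*})'\neq0$; the ``in particular'' clauses follow by comparing degrees and using $\gcd(l,M)=1$. Note that here no constraint beyond maximality is needed, since the reduced small degree is $l$, already prime to $p$.

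Part \ref{fact-2} is where the real work lies and is the step I expect to be the main obstacle. Now $(g^{*})'=0$ forces $p\mid l$, $p\nmid m$, $g'\neq0$, and $h'=0$. Extracting the maximal left powers of $h$ and of $g^{*}$ and matching each against the maximal left extraction of $f$ exactly as above shows they agree, say both equal to $p^{a}$, and gives a reduced collision $G\circ H=G^{*}\circ h^{*}$ with $G=\varphi_{-a}(g)$ in which all four derivatives are nonzero and the small degree is $l_{0}=l/p^{a}$. The delicate point is that the Lemma claims $a=v_{p}(l)=d$, i.e.\ $p\nmid\deg H$, which maximality of the extraction alone does \emph{not} supply (e.g.\ $x^{p}+x\notin\F[x^{p}]$ has degree divisible by $p$). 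To force it I would feed the reduced collision into \ref{th:fifi}: if $p\mid l_{0}$, the First Case \ref{th:fifi-1} would make $H$ a shift of $x^{l_{0}}$, whence $H'=0$, contradicting $H\notin\F[x^{p}]$, while the Second Case \ref{th:fifi-2} is excluded by \ref{eq:ab-5}. Hence $p\nmid l_{0}$, so $a=v_{p}(l)=d$ and $L=l_{0}$, and all relations in \ref{eq:fgx} hold.

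Finally, for \ref{fact-4}, uniqueness is immediate once one observes that $j$ (respectively $d$) is intrinsic to $f$ as its maximal left Frobenius exponent, after which $G,G^{*},H^{*}$ (respectively $G,H,G^{*}$) are determined by division and by applying $\varphi_{-j}$ (respectively $\varphi_{-d}$). The converse is a routine verification: given data satisfying the displayed relations, the definitions $g=x^{p^{j}}\circ G$, $g^{*}=\varphi_{j}(G^{*})$, $h^{*}=x^{p^{j}}\circ H^{*}$ (and their analogues in \ref{fact-2}) reproduce \ref{eq:RST-1}, \ref{eq:RST-3}, and \ref{eq:intersec}, where \ref{eq:RST-3} follows by recombining the Frobenius factors with \ref{eq:frob} and cancelling the common left factor $x^{p^{j}}$.
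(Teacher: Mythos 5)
Your proof is correct, and on parts \ref{fact-0}, \ref{fact-2}, and \ref{fact-4} it essentially reproduces the paper's own argument: the chain rule settles \ref{fact-0}, and in \ref{fact-2} both you and the paper pin down the extracted exponent by feeding the reduced collision into \ref{th:fifi} and reading off that $p$ cannot divide the reduced small degree, via \ref{eq:unidet} in the First Case and \ref{eq:ab-5} in the Second; your remark that the contradiction hypothesis ($p$ divides the reduced degree) automatically forces that degree to be at least $2$, so that \ref{th:fifi} is applicable, is exactly the point needed there. The genuine divergence is in part \ref{fact-1}. The paper takes $j$ maximal with $g=x^{p^{j}}\circ G$ and then proves $h^{*}\in F[x^{p^{j}}]$ by a direct coefficient comparison: if $k$ were the largest exponent in the support of $h^{*}$ not divisible by $p^{j}$, the coefficient of $x^{m(l-1)+k}$ in $g^{*}\circ h^{*}$ would be $lh_{k}^{*}\neq 0$, contradicting $g^{*}\circ h^{*}=x^{p^{j}}\circ G\circ h\in F[x^{p^{j}}]$. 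You instead extract maximal Frobenius powers from $g$ and from $h^{*}$ separately and identify the two exponents through the uniqueness of the maximal left Frobenius factor of $f$, after checking that both cofactors $G\circ h$ and $\varphi_{j''}^{-1}(g^{*})\circ H^{*}$ have nonvanishing derivative. Your route is more uniform --- it is the same mechanism you use in \ref{fact-2}, where the paper instead peels off one factor $x^{p}$ at a time in lockstep --- and it replaces the paper's combinatorial coefficient argument by a structural one; the price is that you must first establish $(h^{*})'=0$ (which you do, via the chain rule) and justify the uniqueness of maximal left extractions, whereas the paper's support argument is self-contained. Two small points to tighten: your appeal to \ref{lem:propdivi} in part \ref{fact-0} goes beyond its stated hypotheses (it is formulated for finite fields), though all you use from it is the chain-rule equivalence, valid over any field of characteristic $p$; and in part \ref{fact-1} the nonvanishing of the derivative of $\varphi_{j''}^{-1}(g^{*})\circ H^{*}$ needs $(H^{*})'\neq 0$ (supplied by maximality of $j''$) in addition to the reason ``$p\nmid l$'' that your parenthesis records.
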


\begin{proof}
  \short\ref{fact-0} (a) means that $f = x^{p} \circ G$ for
  some $G \in F[x]$, which is equivalent to (b). We have
  \begin{equation}\label{eq:g*}
    f' = (g' \circ h) \cdot h' = ((g^{*})' \circ h^{*}) \cdot (h^{*})'.
  \end{equation}
  If (b) holds, then $p\mid \deg f = \dg = lm$, hence $p \mid l$ or
  $p \mid m$. In the case $p \mid l$, \ref{eq:RST-1} implies that $p
  \nmid m$ and $g'(h^{*})' \neq 0$, hence $h'= (g^{*})'=0$ by
  \ref{eq:g*}. Symmetrically, $p \mid m $ implies that $g' =
  (h^{*})'=0$, so that (b) $\Longrightarrow$ (c) in both cases. (c)
  $\Longrightarrow$ (d) is trivial, and \ref{eq:g*} shows (d)
  $\Longrightarrow$ (b).

    \short\ref{fact-1} Let $j \geq 1$ be the largest integer for which
  there exists some $G \in F[x]$ with $g = x^{{p^{j}}} \circ G$. Then
  $j$ and $G$ are uniquely determined, $G$ is monic and original, $G'
  \neq 0$, $p^{j} \mid m$, $\deg G =mp^{-j}=M$, and $p \nmid l$ by
  \ref{eq:RST-1}. Furthermore, we have
  \begin{equation}\label{eq:Gh}
    g^{*} \circ h^{*} = g \circ h = x^{p^{j}}\circ G \circ h.
  \end{equation}

  Writing $h^{*} = \sum_{1 \leq i \leq m} h_{i}^{*}x^{i}$ with
  $h_{m}^{*}= 1$, we let $I=\{i \leq m \colon h^{*}_{i} \neq 0\}$ be
  the support of $h^{*}$. Assume that there is some $i \in I$ with
  $p^{j} \nodiv i$, and let $k$ be the largest such $i$. Then $k < m$,
  $m(l-1)+k$ is not divisible by $p^{j}$, the coefficient of
  $x^{m(l-1)+k}$ in $(h^{*})^{l}$ is $lh^{*}_{k}$, and in $g^{*} \circ
  h^{*}$ it is also $lh^{*}_{k}\neq 0$. This contradicts \ref{eq:Gh},
  so that the assumption is false and $h^{*}= x^{p^{j}} \circ H^{*}$ for a
  unique monic original $H^{*} \in F[x]$, of degree $M =m p^{-j}$.

  Setting $G^{*}=\varphi_{j}^{-1}(g^{*})$, we have $\deg G^{*} = \deg
  g^{*}=l$ and hence $(G^{*})' \neq 0$, $ x^{p^{j}}\circ G^{*}=
  \varphi_{j} (G^{*})\circ x^{p^{j}}$, and
  $$
  x^{p^{j}}\circ G \circ h=g \circ h= f = g^{*} \circ h^{*}=
  \varphi_{j}(G^{*})\circ x^{p^{j}} \circ H^{*}=x^{p^{j}} \circ G^{*}
  \circ H^{*},
  $$
  $$
  G\circ h= G^{*} \circ H^{*}.
  $$

  \short\ref{fact-2} Since $p \mid l = \deg g^{*}$, \ref{eq:RST-1}
  implies that $p \nmid m$, $g' \neq 0$, and $g' \circ h \neq 0$. In
  \ref{eq:g*}, we have $f'=0$ and hence $h' =0$. There exist monic
  original $G_{1}$, $H_{1} \in F[x]$ with $g^{*}= x^{p} \circ G_{1}$,
  $h = x^{p} \circ H_{1}$, and
  \begin{align*}
    x^{p} \circ G_{1} \circ h^{*}  &= f= g \circ x^{p}\circ
    H_{1}=x^{p}
    \circ \varphi_{1}^{-1}(g) \circ H_{1},\\
    G_{1} \circ h^{*}  &= \varphi_{1}^{-1}(g) \circ H_{1}.
  \end{align*}
  If $G_{1}' = 0$, then $H'_{1}=0$ and we can continue this
  transformation. Eventually we find an integer $j \geq 1$ and monic
  original $G_{j}$, $H_{j} \in F[x]$ with $p^{j} \mid l$, $g^{*}=
  x^{p^{j}} \circ G_{j}$, $h = x^{p^{j}} \circ H_{j}$, and $G_{j}'
  \neq 0$. We set $G = \varphi_{j}^{-1}(g), G^{*}= G_{j}$, and
  $H=H_{j}$. Then $G'(G^{*})'\neq 0,\; \deg G^{*} = \deg H = lp^{-j}, \deg G
  = m$. As above, we have
  \begin{align}\label{al:HH}
 f = (x^{p^{d}} \circ G^{*}) \circ h^{*} = g \circ (x^{p^{d}} \circ
  H) = x^{p^{d}} \circ G \circ H, \nonumber\\
G^{*}\circ h^{*}= G_{j} \circ h^{*} = \varphi_{j}^{-1}(g) \circ
  H_{j}= G \circ H.
   \end{align}

  In \short\ref{fact-2}, $d$ is defined as the multiplicity of $p$ in
  $l$. We now show that $j=d$. We set $l^{*} = lp^{-j}$. If $l^{*}
  \geq 2$, then the collision \ref{al:HH} satisfies the assumptions
  \ref{eq:RST-1} through \ref{eq:RST-3}, with $l^{*} < m$ instead of
  $l$. Thus \ref{th:fifi} applies.
  In the First Case, \ref{eq:unidet} shows
  that $p \nmid l^{*}$. It follows that $j = d$ and $l^{*}=L$. In the
  Second Case, we have $p \nmid l^{*}m = lp^{-j}m$ by \ref{eq:ab-5}
  so that again $j=d$ and $l^{*}=L$. In the remaining case $l^{*}=1$,
  we have $L=1$ and $G^{*}=H=x$.

  \short\ref{fact-4} The uniqueness of all quantities is clear.
\end{proof}

\section{Normal form: vanishing derivatives and coprime degrees}

\ref{fact} allows us to get rid of the assumption \ref{eq:RST-2}, namely
that $g'(g^{*})' \neq 0$, in the normal form of \ref{th:fifi}.
We need some simple properties of the Frobenius
map $\varphi_{j}$ from \ref{rem:coll}.

\begin{lemma}\label{lem:Frobmap}
  Let F be a field of characteristic $p\geq2$, $f$, $g\in F[x]$, $\parTwo\in
  F$, let $ i$, $j \geq 1$, and denote by $f'$ the derivative of $f$. Then
  \begin{enumerate}
  \item \label{lem:Frobmap-1}
    $\varphi_{j}(fg)=\varphi_{j}(f)\varphi_{j}(g)$ and
    $\varphi_{j}(f^{i})={\varphi_{j}(f)}^{i}$,
    \item\label{lem:Frobmap-3} $\varphi_{j}(f\circ
    g)=\varphi_{j}(f)\circ \varphi_{j}(g)$ and
    $\varphi_{j}(f(a))=\varphi_{j}(f)(\parTwo^{p^{j}})$,
  \item\label{lem:Frobmap-5} $\varphi_{j}(f')=\varphi_{j}(f)'$.
  \end{enumerate}
\end{lemma}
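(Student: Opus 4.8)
The plan is to observe that $\varphi_{j}$ is not merely $\mathbb{F}_{p}$-linear but a ring endomorphism of $F[x]$, from which all three claims fall out quickly. The crucial input is that, since $\mathrm{char}\, F = p$, the $p^{j}$th power map $\sigma \colon a \mapsto a^{p^{j}}$ on $F$ is a ring homomorphism: the identity $(a+b)^{p}=a^{p}+b^{p}$, iterated to $(a+b)^{p^{j}}=a^{p^{j}}+b^{p^{j}}$, together with the trivial $(ab)^{p^{j}}=a^{p^{j}}b^{p^{j}}$, give additivity and multiplicativity. By \ref{rem:coll}, $\varphi_{j}$ is precisely the coefficient-wise extension of $\sigma$ fixing $x$, that is, $\varphi_{j}(\sum_{i}a_{i}x^{i})=\sum_{i}a_{i}^{p^{j}}x^{i}$; as the unique extension of the ring homomorphism $\sigma$ that sends $x$ to $x \in F[x]$, it is itself a ring homomorphism.

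Granting this, \short\ref{lem:Frobmap-1} is immediate: the identity $\varphi_{j}(fg)=\varphi_{j}(f)\varphi_{j}(g)$ is the multiplicativity just established, and $\varphi_{j}(f^{i})=\varphi_{j}(f)^{i}$ then follows by induction on $i$. For \short\ref{lem:Frobmap-3} I would write $f=\sum_{i}a_{i}x^{i}$, so that $f\circ g=\sum_{i}a_{i}g^{i}$, and combine additivity of $\varphi_{j}$ with \short\ref{lem:Frobmap-1} and $\varphi_{j}(a_{i})=a_{i}^{p^{j}}$ to obtain
$$
\varphi_{j}(f\circ g)=\sum_{i}a_{i}^{p^{j}}\varphi_{j}(g)^{i}=\Bigl(\sum_{i}a_{i}^{p^{j}}x^{i}\Bigr)\circ\varphi_{j}(g)=\varphi_{j}(f)\circ\varphi_{j}(g).
$$
The evaluation identity is the special case where the inner polynomial is the constant $\parTwo$: it reads $\varphi_{j}(f(\parTwo))=\varphi_{j}(f)(\varphi_{j}(\parTwo))=\varphi_{j}(f)(\parTwo^{p^{j}})$, and also follows directly by applying the ring homomorphism $\sigma$ to $f(\parTwo)=\sum_{i}a_{i}\parTwo^{i}$.

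Finally, \short\ref{lem:Frobmap-5} is a one-line coefficient computation. With $f=\sum_{i}a_{i}x^{i}$ we have $f'=\sum_{i}ia_{i}x^{i-1}$, whence $\varphi_{j}(f')=\sum_{i}(ia_{i})^{p^{j}}x^{i-1}=\sum_{i}ia_{i}^{p^{j}}x^{i-1}$, where I use that the integer factor $i$, read in $\mathbb{F}_{p}$, satisfies $i^{p^{j}}=i$ by Fermat; this matches $\varphi_{j}(f)'=(\sum_{i}a_{i}^{p^{j}}x^{i})'$ term by term. I anticipate no genuine obstacle: the only point demanding care is to confirm that the $\mathbb{F}_{p}$-linear map of \ref{rem:coll} is indeed the coefficient-wise Frobenius and hence multiplicative, since every subsequent claim rests on \short\ref{lem:Frobmap-1}.
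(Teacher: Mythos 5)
Your proposal is correct and takes essentially the same route as the paper: the paper also treats part \short{\ref{lem:Frobmap-1}} as immediate (your ring-endomorphism framing just makes explicit why), proves \short{\ref{lem:Frobmap-3}} by the identical expansion $\varphi_{j}(f\circ g)=\varphi_{j}(\sum f_{i}g^{i})=\sum f_{i}^{p^{j}}\varphi_{j}(g)^{i}=\varphi_{j}(f)\circ\varphi_{j}(g)$ with the evaluation statement as a special case, and verifies \short{\ref{lem:Frobmap-5}} by the same coefficient computation using $i^{p^{j}}=i$ in $\mathbb{F}_{p}$. The only difference is presentational: you spell out that the coefficient-wise extension of Frobenius is a ring homomorphism, which the paper leaves implicit.
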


\begin{proof}\ref{lem:Frobmap-1} is immediate. For \ref{lem:Frobmap-3}, we write
  $f=\sum{f_{i}x^{i}}$ with all $ f_{i}\in F$. Then
 $$\varphi_{j}(f\circ
 g)=\varphi_{j}(\sum{f_{i}g^{i}})=\sum{f_{i}^{p^{j}}\varphi_{j}(g^{i})}=
 \varphi_{j}(f)\circ\varphi_{j}(g).
$$
The second claim is a special case. For \ref{lem:Frobmap-5}, we have
  $$
  \varphi_{j}(f{'})=\varphi_{j}(\sum{if_{i}x^{i-1}})=\sum{i^{p^{j}}f_{i}^{p^{j}}x^{i-1}}
 =\sum{if_{j}^{p^{j}}x^{i-1}}
  =\varphi_{j}(f)'.  \qed $$
\end{proof}

\begin{theorem}\label{lem:div2a}
  Let $F$ be a perfect field of characteristic $p \geq 2$. Let $m > l \geq 2$
  be integers with $\gcd(l,m)=1$, set $\dg = lm$ and let $f,g,h,g^{*},
  h^{*} \in F[x]$ be monic original of degrees $n$, $m$, $l$, $l$,
  $m$, respectively, with $f=g \circ h = g^{*} \circ h^{*}$. Then the
  following hold.
  \begin{enumerate}
  \item\label{lem:div2a-i} If $g'=0$, then there exists a uniquely
    determined positive integer $j$ so that $p^{j}$ divides $m$ and
    either (\bare\ref{lem:div2a-i/1}) or (\bare\ref{lem:div2a-i/2})
        hold; furthermore, $p \nmid l$ and (\bare\ref{lem:div2a-i/3}) is
    true. We set $M=p^{-j}m$.
    \begin{enumerate}
    \item\label{lem:div2a-i/1} (First Case) Exactly one of the
      following three statements is true.
      \begin{enumerate}
      \item $M>l$ and there exist a monic $W \in F[x]$ of
        degree $S = \lfloor M/l \rfloor$ and $\parTwo \in F$ so that
        $$
        KW +lxW'\neq 0
        $$
        for $K = M-l S$, and \ref{eq:mopo} through \ref{eq:unidet-3}
        hold for $k = p^{j}K$, $s=p^{j}S$, and $w=W^{p^{j}}$.
        Conversely, any $W$ and $a$ as above yield via these formulas
        a collision satisfying \ref{eq:RST-1}, \ref{eq:RST-3} and
        \ref{eq:intersec}, with $g'=0$. If $p \nmid M$, then $W$ and
        $a$ are uniquely determined by $f$ and $l$.
      \item $2 \leq M <l$ and there exist a monic $W \in F[x]$ of
        degree $S = \lfloor l/M \rfloor$ and $\parTwo \in F$ so that
        \begin{align*}
          & KW +lxW'\neq 0,\\
         & f  = \big(x^{kM} w^{M}(x^{M})\big)^{[a]}
        \end{align*}
        for $K = l-MS$, $k = p^{j}K$, $s=p^{j}S$, and $w=W^{p^{j}}$,
        and \ref{eq:mopo} through \ref{eq:unidet-3} hold with $l$
        replaced by $M$. Conversely, any $W$ and $a$ as above yield
        via these formulas a collision satisfying \ref{eq:RST-1},
        \ref{eq:RST-3} and \ref{eq:intersec}, with
        $g'=0$. Furthermore, $W$ and $a$ are uniquely determined by
        $f$ and $l$.
      \item $m=p^{j}$, $g=h^{*}= x^{p^{j}}$, and $g^{*}=
        \varphi_{j}(h)$.
      \end{enumerate}
    \item\label{lem:div2a-i/2} (Second Case) $p \nmid M$ and all
      conclusions of \ref{th:fifi-2} hold, except \ref{eq:ab-5}.
    \item\label{lem:div2a-i/3}
      Assume that $M \geq 2$, and let $f$ be a collision of the Second
      Case. Then $f$ belongs to the First Case if and only if $~min\{l,M\}=2$.
    \end{enumerate}
  \item\label{lemdiv2a-ii} If $(g^{*})'=0$, then there exists a unique
    positive integer $d$ such that $p^{d} \mid l$, $p \nmid
    p^{-d}l=L$, and either (\bare\ref{lemdiv2a-ii/1}) or
    (\bare\ref{lemdiv2a-ii/2}) holds; furthermore,
    (\bare\ref{lemdiv2a-ii/3}) is true.
\begin{enumerate}
\item\label{lemdiv2a-ii/1}(First Case) There exist a monic $ w \in
  F[x]$ of degree $\lfloor m/L\rfloor$ and $\parTwo \in F $ so that
    \begin{align*}
      f& = \big( x^{kl} w^{L}(x^{\ell})\big)^{[a]} ,\\
      (g,h) & = (x^{k} w^{L}, x^{\ell})^{[a]}, \\
      (g^{*}, h^{*}) & = \big(x^{\ell}, x^{k}\varphi_{d}^{-1}(w)(x^{L})\big)^{[a]} ,\\
      kw +Lxw' &\neq 0,
    \end{align*}
    where $m = L \lfloor m/L\rfloor + k$. The quantities $w$ and $a$
    are uniquely determined by $f$ and $l$. Conversely, any $w$ and
    $a$ as above yield via these formulas a collision satisfying
    \ref{eq:RST-1}, \ref{eq:RST-3}, and
    \ref{eq:intersec}.

\item\label{lemdiv2a-ii/2} (Second Case) There exist $z,a\in F$ with
    $z\neq 0$ for which all conclusions of \ref{th:fifi-2} hold, except
    \ref{eq:ab-5}. Conversely, any $(z,a)$ as above yields a collision
    satisfying \ref{eq:RST-1}, \ref{eq:RST-3} and \ref{eq:intersec}.
     \item\label{lemdiv2a-ii/3} When $L\geq 3$, then
       (\bare\ref{lemdiv2a-ii/1}) and (\bare\ref{lemdiv2a-ii/2}) are
       mutually exclusive. For $L\leq 2$, (\bare\ref{lemdiv2a-ii/2}) is
       included in (\bare\ref{lemdiv2a-ii/1}).
  \end{enumerate}
\end{enumerate}
\end{theorem}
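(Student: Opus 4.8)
The plan is to reduce both vanishing-derivative cases to the tame-left situation already settled in \ref{th:fifi}, by stripping off the Frobenius component supplied by \ref{fact} and then transporting the resulting normal form back. For \ref{lem:div2a-i}, suppose $g'=0$. Then \ref{fact-1} produces the unique $j$ with $p^{j}\mid m$, sets $M=p^{-j}m$, and yields monic original $G,G^{*},H^{*}$ with $g=x^{p^{j}}\circ G$, $h^{*}=x^{p^{j}}\circ H^{*}$, $g^{*}=\varphi_{j}(G^{*})$, together with the reduced collision $G\circ h=G^{*}\circ H^{*}$ in which $G'(G^{*})'\neq 0$. When $M>l$ the tuple $(l,M,G,h,G^{*},H^{*})$ satisfies \ref{eq:RST-1} through \ref{eq:RST-3}, and when $2\le M<l$ the swapped tuple $(M,l,G^{*},H^{*},G,h)$ does; in either case I apply \ref{th:fifi} to this reduced, coprime, nonvanishing-derivative collision.

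If the reduced collision is of the First Case, I treat the generic sub-case $M>l$ first. Here \ref{th:fifi-1} returns a monic $W$ of degree $S=\lfloor M/l\rfloor$, a shift $a$, and the nondegeneracy $KW+lxW'\neq 0$. Setting $k=p^{j}K$, $s=p^{j}S$ and $w=W^{p^{j}}$, the identity $g=x^{p^{j}}\circ G=G^{p^{j}}$ together with the additivity of $p$th powers gives $x^{k}w^{l}=(x^{K}W^{l})^{p^{j}}$ for the left component and $x^{k}w(x^{l})=(x^{K}W(x^{l}))^{p^{j}}$ for $h^{*}$, so that \ref{eq:mopo} through \ref{eq:unidet-3} hold verbatim, with the degree check $m=p^{j}(Sl+K)=sl+k$. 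Since $h$ is untouched by the reduction, the shift coincides with the one produced by \ref{th:fifi-1}, and $g^{*}=\varphi_{j}(G^{*})$ with $\varphi_{j}(x^{l})=x^{l}$ keeps $g^{*}$ of the required pure-power form, the inner shifts matching by the composition-compatibility of $\varphi_{j}$ from \ref{lem:Frobmap}. Because $w=W^{p^{j}}$ is a $p$th power, $kw+lxw'=0$, which is exactly why $g'=0$, and the genuine nonvanishing is carried by $KW+lxW'\neq 0$; uniqueness when $p\nmid M$ and the converse are inherited from \ref{th:fifi-1}. When instead $2\le M<l$ the reduced collision is inverted, so the same argument applied to the swapped tuple yields the stated formulas with $l$ replaced by $M$ and $S=\lfloor l/M\rfloor$. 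Finally, the boundary $M=1$ forces $G=H^{*}=x$, hence $g=h^{*}=x^{p^{j}}$ and $m=p^{j}$, and then \ref{eq:frob} applied to $g\circ h=g^{*}\circ h^{*}$ gives $g^{*}=\varphi_{j}(h)$.

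If the reduced collision is of the Second Case, \ref{th:fifi-2} supplies $z,a$ with $p\nmid lM$ and $G$ a shifted Dickson polynomial of degree $M$. These formulas lift unchanged to the original degrees: by \ref{Tsqfree-3} and the composition law \ref{eq:Trecursion} one has $T_{m}(x,y)=T_{p^{j}}(x,y^{M})\circ T_{M}(x,y)=T_{M}(x,y)^{p^{j}}$, so $g=G^{p^{j}}=T_{m}(x,z^{l})^{[\cdots]}$, while $\varphi_{j}$ merely replaces the reduced Dickson parameters by the original ones, so that every conclusion of \ref{th:fifi-2} survives. Since $p\mid m$ here, \ref{Tsqfree-3b} gives $T_{m}'(x,z^{l})=0$, recovering $g'=0$ as the failure of \ref{eq:ab-5}, and the mutual-exclusion statement \ref{lem:div2a-i/3} is \ref{th:fifi-3} read off the reduced collision, whose smaller degree is $\min\{l,M\}$. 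Part \ref{lemdiv2a-ii} is the mirror image run through \ref{fact-2}: from $(g^{*})'=0$ one obtains the unique $d$ with $p^{d}\mid l$, $L=p^{-d}l$ coprime to $p$, and $g=\varphi_{d}(G)$, $h=x^{p^{d}}\circ H$, $g^{*}=x^{p^{d}}\circ G^{*}$. Here $p\nmid m$ leaves the larger degree intact, so no role-swapping occurs and there is no analogue of the sub-cases; applying \ref{th:fifi} to $(L,m,G,H,G^{*},h^{*})$ for $L\ge 2$ and using \ref{lem:Frobmap}, the reduced left component transports to $g=x^{k}w^{L}$ with $w=\varphi_{d}(W)$ and $k=K$, while the untwisted $h^{*}$ forces the factor $\varphi_{d}^{-1}(w)=W$ displayed in \ref{lemdiv2a-ii/1}; the Second Case again lifts through the same Dickson identity, and \ref{th:fifi-3} gives the dichotomy with threshold $L$.

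The main obstacle is the First-Case bookkeeping. One must verify that the substitutions $k=p^{j}K$, $s=p^{j}S$, $w=W^{p^{j}}$ in part \ref{lem:div2a-i} (and $w=\varphi_{d}(W)$, $k=K$ in part \ref{lemdiv2a-ii}) turn all four explicit component formulas of the reduced collision into the claimed formulas after reinserting the Frobenius factor, that the single shift $a$ threads consistently through all four components even though the twist $\varphi_{j}$ (respectively $\varphi_{d}$) acts on one side only, and that the reindexing when $M<l$ is handled correctly by the swapped tuple. Once the additivity of $p$th powers and the transport rules of \ref{lem:Frobmap} are in hand this is mechanical, and the Second Case is then immediate from $T_{p^{j}M}=T_{M}^{p^{j}}$.
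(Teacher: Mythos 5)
Your reduction is exactly the paper's: strip the Frobenius factor supplied by \ref{fact}, apply \ref{th:fifi} to the reduced coprime collision with nonvanishing derivatives, and transport the normal form back through $p^{j}$-th powers (part \ref{lem:div2a-i}) respectively $\varphi_{d}$ (part \ref{lemdiv2a-ii}), using \ref{lem:Frobmap}, $T_{p^{j}}=x^{p^{j}}$, and the composition law \ref{eq:Trecursion}. Your treatment of part \ref{lem:div2a-i} matches the paper's proof in all three sub-cases $M>l$, $2\leq M<l$, $M=1$, including the correct observation that $kw+lxw'=0$ for $w=W^{p^{j}}$ while the genuine nondegeneracy lives at the level of $W$, and the inheritance of uniqueness and of the converse from \ref{th:fifi}.

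Part \ref{lemdiv2a-ii}, however, has a genuine gap at $L=1$, i.e.\ when $l=p^{d}$ is a power of the characteristic. You dispose of \ref{lemdiv2a-ii/3} by saying that ``\ref{th:fifi-3} gives the dichotomy with threshold $L$'', but \ref{th:fifi-3} only speaks about $L\geq 2$: mutual exclusion for $L\geq 3$ and inclusion for $L=2$. For $L=1$ there is no reduced collision to which \ref{th:fifi} applies at all (\ref{fact-2} gives $G^{*}=H=x$, and the tuple $(L,m,G,H,G^{*},h^{*})$ violates \ref{eq:RST-1}), so nothing can be read off the reduced level; yet \ref{lemdiv2a-ii/3} asserts that the Second Case is included in the First Case for all $L\leq 2$. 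The paper closes this case by an explicit construction absent from your proposal: for $l=p^{d}$ one has $k=0$ in \ref{lemdiv2a-ii/1}, and for a Second Case collision $f=T_{\dg}(x,z)^{[a]}$ one takes $w=T_{m}(x,z^{p^{d}})$, so that $T_{\dg}(x,z)=T_{m}(x,z^{p^{d}})\circ T_{p^{d}}(x,z)=w\circ x^{p^{d}}$ by \ref{eq:Trecursion} and \ref{Tsqfree-3}, exhibiting $f=\big(w(x^{\ell})\big)^{[a]}$ as an instance of \ref{lemdiv2a-ii/1}. A second, smaller omission in the same part: the uniqueness of $(w,a)$ in \ref{lemdiv2a-ii/1} is part of the statement but you never argue it, and since $p\mid l\mid \dg$ it is not literally inherited from \ref{th:fifi-1}; the paper first cancels a right factor $x^{p^{d}}$ (after commuting the shift past it, turning $x+a$ into $x+a^{p^{d}}-\tilde{a}^{p^{d}}$) and then reruns the argument of \ref{eq:unique} on polynomials of degree $p^{-d}\dg$, which is prime to $p$. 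The injectivity of $p^{d}$-th powers, which you did invoke in part \ref{lem:div2a-i}, would serve equally well here, but it needs to be said.
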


\begin{proof}
  \short\ref{lem:div2a-i} We take the quantities $j$, $M$, $G$,
  $G^{*}$, $H^{*}$ from \ref{fact-1} and apply \ref{th:fifi} to the
  collision $G\circ h=G^{*}\circ H^{*}$ in \ref{al:fact}. We start
  with the First Case (\ref{th:fifi-1}). If $M > l$, it yields a monic
  $W \in F [x]$ of degree $S =\lfloor M/l \rfloor$ and $\parTwo \in F$
  with
  \begin{align}
    &G\circ h=G^{*}\circ h^{*}=(x-\parTwo^{*})\circ x^{Kl}W^{l}(x^{l})\circ (x+\parTwo), \nonumber \\
    & KW + lxW' \neq 0,
  \end{align}
  where $K=M-l S$ and $\parTwo^{*}=\parTwo^{Kl}W^{l}(\parTwo^{l})$.  We
  set $k=p^{j}K$ and $w=W^{p^{j}}$. Then
  \begin{align*}
    f&=g \circ h = x^{p^{j}}\circ G \circ h =
    x^{p^{j}}\circ(x-\parTwo^{*})
    \circ x^{Kl}W^{l}(x^{l})\circ(x+\parTwo)\\
    &=\bigl( x-(\parTwo^{*})^{p^{j}}\bigr) \circ
    x^{p^{j}Kl}(W^{p^{j}})^{l}(x^{l})\circ(x+\parTwo) =
    \big(x^{kl}w^{l}(\parTwo^{l})\big)^{[a]}.
  \end{align*}
  The formulas \ref{eq:unidet-1} and \ref{eq:unidet-3} are readily
  derived in a similar fashion. Furthermore, we have
$$
ls + k = lp^{j} S +p^{j}(M-lS)=m,
$$
and the other claims also follow.

If $2 \leq M < l$, we have to reverse the roles of $M$ and $l$ in the
application of \ref{th:fifi-1}. Thus we now find a monic $W \in F[x]$
of degree $S= \lfloor l/M \rfloor$ and $\parTwo\in F$ with
  $$
  G \circ h = (x-a^{*}) \circ x^{KM}W^{M}(x^{M}) \circ (x+\parTwo),
  $$
  where $K = l-M S$, $a^{*}=\parTwo^{KM}W^{M}(\parTwo^{M})$, and $KW
  +MxW' \neq 0$. We set $k=p^{j}K$ and $w=W^{p^{j}}$. Then
  \begin{align*}
    f  &= x^{p^{j}} \circ G \circ h = \varphi_{j}(x-a^{*}) \circ
    x^{p^{j}} \circ x^{KM}W^{M}(x^{M}) \circ (x+\parTwo)\\
   &= \big(x^{kM} w^{M}(x^{M})\big)^{[a]}.
  \end{align*}
Furthermore we have
$$
Ms+k = Mp^{j} S + p^{j}(l-M S) = p^{j}l.
$$
Equations \ref{eq:unidet-1} through \ref{eq:unidet-3} are readily
checked, the claim about the converse is clear, and since $p \nmid l$,
$W$ and $a$ are uniquely determined.

If $M=1$, then $g= h^{*}=x^{p^{j}}$, $f=x^{p^{j}} \circ h =
\varphi_{j}(h) \circ x^{p^{j}}= g^{*} \circ x^{p^{j}}$, and $g^{*}=
\varphi_{j}(h)$ by the uniqueness of tame decompositions..

  In the Second Case of \ref{th:fifi}, we use $T_{p^j} = x^{p^j}$ from
  \ref{Tsqfree-3}. Now \ref{th:fifi-2} provides $z, \parTwo \in F$
  with $z\neq 0$ and
  \begin{align*}
  G \circ h &= G^{*}\circ H^{*}= (x-T_{lM}(\parTwo,z)) \circ
  T_{lM}(x,z)\circ(x+\parTwo),\\
  G &= (x-T_{lM}(a,z) \circ T_{M}(x,z^{l}) \circ (x+T_{l}(a,z)).
\end{align*}
Since $G' \neq 0$, we have $p\nmid M$, and hence $p \nmid lM$. Thus
$z$ and $a$ are uniquely determined. Furthermore
\begin{align*}
  f&=g\circ h = x^{p^{j}} \circ G\circ h
  = (x^{p^{j}}-(T_{lM}(\parTwo,z))^{p^{j}})
  \circ T_{lM}(x,z)\circ (x+\parTwo)\\
  &=(x- T_{\dg}(\parTwo,z))\circ x^{p^{j}}\circ T_{lM}(x,z)\circ
  (x+\parTwo) = T_{\dg}(x,z)^{[a]}.
  \end{align*}

  In \short\ref{lem:div2a-i/3}, we have $p \nmid lM = p^{-j}\dg$ from
 (\bare\ref{lem:div2a-i/2}). By \ref{th:fifi-3}, $G \circ h$
  belongs to the First Case if and only if $~min \{l,M\}=2$.

\short\ref{lemdiv2a-ii} We take $d$, $L$, $G$, $H$, $G^{*}$ from
\ref{fact-2}, and apply \ref{th:fifi} to the collision $G \circ H =
G^{*} \circ h^{*}$. In the First Case, this yields a monic $W \in
F[x]$ of degree $\lfloor m/L \rfloor$ and $\parTwo \in F$ so that the
conclusions of \ref{th:fifi-1} hold for these values, with $k =
m-L \cdot \lfloor m / L \rfloor$ and $kW + LxW' \neq 0$. We set $w =
\varphi_{d}(W)$. Then
  \begin{align*}
    \deg G &= \deg (x^{k}W^{L})= (m-L\cdot \lfloor m / L
    \rfloor) + L \cdot \lfloor m / L \rfloor = m,\\
    g &= \varphi_{d}(G) = \varphi_{d}\bigl
    ((x-\parTwo^{kL}W^{L}(\parTwo^{L}))\circ x^{k}W^{L}
    \circ (x+\parTwo^{L})\bigr )\\
    &= \varphi_{d}(x-\parTwo^{kL}W^{L}(\parTwo^{L})) \circ
    \varphi_{d}(x^{k}W^{L})
    \circ \varphi_{d}(x+\parTwo^{L})\\
    &=(x-\parTwo^{kl}w^{L}(\parTwo^{l})) \circ x^{k}w^{L} \circ
    (x+\parTwo^{l}) = (x^{k} x^{L})^{[a^{\ell}]} .\\
    h &= x^{p^{d}} \circ H = x^{p^{d}} \circ (x-\parTwo^{L}) \circ
    x^{L}
    \circ  (x+\parTwo)= (x^{\ell})^{[a]},\\
    g^{*} &= x^{p^{d}} \circ G^{*} = x^{p^{d}} \circ
    (x-\parTwo^{kL}W^{L}(\parTwo^{L}))
    \circ x^{L} \circ (x+\parTwo^{k}W(\parTwo^{L}))\\
    &= (x-\parTwo^{kl} W^{p^{d}L}(\parTwo^{L})) \circ x^{l} \circ
    (x+\parTwo^{k}W(\parTwo^{L}))\\
    &= (x-\parTwo^{kl}w^{L}(\parTwo^{l}))\circ x^{l} \circ (x
    + \parTwo^{k} \varphi_{d}^{-1}(w)(\parTwo^{L})) =
    (x^{\ell})^{[\parTwo^{k} \varphi_{d}^{-1}(w)(\parTwo^{L})]} ,\\
    h^{*} &= (x-\parTwo^{k}W(\parTwo^{L})) \circ x^{k}W(x^{L}) \circ (x+\parTwo)\\
    &= (x-\parTwo^{k} \varphi_{d}^{-1}(w)(\parTwo^{L})) \circ x^{k}
    \varphi_{d}^{-1}(w)(x^{L}) \circ (x+\parTwo) = \big(x^{k}
    \varphi_{d}^{-1}(w)(x^{L})\big)^{[a]},\\
    f & = \big(x^{kl}w^{L}(x^{l})\big)^{[a]} .
  \end{align*}

Furthermore, \ref{lem:Frobmap} implies that
$$
kw + Lxw'= k \varphi_{d}(W)+Lx \varphi_{d}(W)'=
\varphi_{d}(kW+LxW')\neq 0.
$$
The claimed uniqueness follows, with a small modification, by the
argument for \ref{th:fifi-1}. With notation in the spirit of
\ref{eq:unique}, one finds that
\begin{align*}
x^{kL}\tilde w^{L}(x^{L})\circ x^{p^{d}}&=u\circ
x^{kL}w^{L}(x^{L})\circ(x+a^{p^{d}}-\tilde a^{p^{d}})\circ
x^{p^{d}},\\
x^{kL}\tilde w^{L}(x^{L})&=u\circ
x^{kL}w^{L}(x^{L})\circ(x+a^{p^{d}}-\tilde a^{p^{d}}).
\end{align*}
The degree $p^{-l}\dg$ of these polynomials is not divisible by $p$,
and the remaining argument following \ref{eq:unique} applies.

In the Second Case, \ref{th:fifi-2} provides $z,a\in F$ with $z\neq 0$
and
\begin{align*}
  g&=\varphi_{d}(G)=\varphi_{d}\bigl((x-T_{mL}(a,z))\circ
  T_{m}(x,z^{L})\circ(x+T_{L}(a,z))\bigr)\\
  &=\bigl(x-\varphi_{d}(T_{mL}(a,z))\bigr)\circ
  \varphi_{d}(T_{m}(x,z^{L}))\circ\bigl(x+\varphi_{d}(T_{L}(a,z))\bigr)\\
  &=(x-T_{mL}(a,z)^{p^{d}})\circ
  T_{m}(x,(z^{L})^{p^{d}})\circ(x+T_{L}(a,z)^{p^{d}})\\
  &=(x-T_{\dg}(a,z))\circ T_{m}(x,z^{l})\circ (x+T_{l}(a,z)) =
  T_{m}(x,z^{\ell})^{[T_{\ell}(a,z)]},\\
  h&=x^{p^{d}}\circ H=x^{p^{d}}\circ(x-T_{L}(a,z))\circ
  T_{L}(x,z)\circ(x+a) \\
  &=(x-T_{l}(a,z))\circ T_{l}(x,z)\circ (x+a)   = T_{\ell}(x,z)^{[a]},\\
  g^{*}&=x^{p^{d}}\circ G^{*}=x^{p^{d}}\circ(x-T_{Lm}(a,z))\circ
  T_{L}(x,z^{m})\circ(x+T_{m}(a,z))\\
  &=(x-T_{\dg}(a,z))\circ x^{p^{d}}\circ T_{L}(x,z^{m})\circ
  (x+T_{m}(a,z))\\
  &= x-T_{\dg}(a,z))\circ T_{l}(x,z^{m})\circ (x+T_{m}(a,z))= T_{\ell}
  (x,z^{m})^{[T_{m}(a,z)]},\\
  h^{*}& = T_{m}(x,z)^{[a]},\\
  f& =  T_{\dg}(x,z)^{[a]}   .
\end{align*}

\short\ref{lemdiv2a-ii/3} follows from \ref{th:fifi-3} for $L\geq
2$. If $L=1$, then $l=p^{d}$ and $k=0$ in
\short\ref{lemdiv2a-ii/1}. For any $f= T_{\dg}(x,z)^{[a]}$ in
\short\ref{lemdiv2a-ii/2}, we take $w=T_{m}(x,z^{p^{d}})$. Then
\begin{align*}
T_{\dg}(x,z)&=T_{m}(x,z^{p^{d}})\circ T_{p^{d}}(x,z)=w\circ
x^{p^{d}},
\end{align*}
so that $f = w(x^{\ell})^{[a]}$ is an instance of \short\ref{lemdiv2a-ii/1}.
\end{proof}

\section{Arbitrary tame degrees}

If $p \nmid \dg$, then the case where $\gcd (l,m) \neq 1$ is reduced
to the previous one by the following result of \cite{tor88a}. We will
only use the special case where $l=l^{*}$ and $m = m^{*}$.

\begin{fact}
\label{tortrat}
Suppose we have a field $F$ of characteristic $p \geq 0$, integers $l,
l^{*}, m, m^{*} \geq 2$ with $p \nmid lm$, and monic original
polynomials $g, h, g^{*}, h^{*} \in F[x]$ of degrees $m, l, l^{*},
m^{*}$, respectively, with $g \circ h = g^{*} \circ
h^{*}$. Furthermore, let $i = \gcd (m, l^{*})$ and $j=\gcd
(l,m^{*})$. Then the following hold.
\begin{enumerate}
\item\label{fact:monopo-1}
There exist unique monic original polynomials $u, v, \tilde{g}, \tilde{h},
\tilde{g}^{*}, \tilde{h}^{*} \in F[x]$ of degrees $i, j, m/i, l/j,
l^{*}/i, m^{*}/j$, respectively, so that
\begin{align}\label{al:monopo}
g &= u \circ \tilde{g},\nonumber\\
h&= \tilde{h} \circ v,\\
g^{*}& = u \circ \tilde{g}^{*},\nonumber \\
h^{*}&= \tilde{h}^{*} \circ v.\nonumber
\end{align}
\item\label{fact:monopo-2}
Assume that $l = l^{*} < m= m^{*}$. Then $i=j$ and $m/i, l/i,
 \tilde{f}=\tilde{g} \circ \tilde{h},
 \tilde{g}, \tilde{h},
\tilde{g}^{*}, \tilde{h}^{*}$ satisfy the assumptions of \ref{th:fifi}.
\end{enumerate}
\end{fact}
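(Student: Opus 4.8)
The statement has two layers. Part~\ref{fact:monopo-1} is precisely Tortrat's theorem, so I would invoke \cite{tor88a} directly; if a self-contained argument were wanted, the natural route is the L\"uroth correspondence between polynomial components of $f$ and intermediate fields $F(f)\subseteq K\subseteq F(x)$. Here $F(h)$ and $F(h^{*})$ are two such fields, their compositum produces the common right component $v$, and a suitable field produces the common left component $u$; the degree identities $\deg u=\gcd(m,l^{*})=i$ and $\deg v=\gcd(l,m^{*})=j$, together with the coprimality of the tilded degrees and the uniqueness of the six factors, emerge from the fact that $F(x)/F(f)$ is tamely and totally ramified at infinity, which is where $p\nmid lm$ is essential. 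This ramification bookkeeping is the genuine obstacle, and it is exactly what I would relegate to the cited reference.

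Granting \ref{fact:monopo-1}, part~\ref{fact:monopo-2} is a short deduction, and that is what I would write out in full. Specializing $l^{*}=l$ and $m^{*}=m$ gives $i=\gcd(m,l)=\gcd(l,m)=j$ at once. To obtain a collision among the tilded polynomials, substitute \ref{al:monopo} into $g\circ h=g^{*}\circ h^{*}$, which yields $u\circ(\tilde g\circ\tilde h)\circ v=u\circ(\tilde g^{*}\circ\tilde h^{*})\circ v$. Since $\deg(R\circ v)=\deg R\cdot\deg v$ with $\deg v=i\geq1$, right composition with $v$ is injective, so I may cancel $v$. For the left factor, set $D=\tilde g\circ\tilde h-\tilde g^{*}\circ\tilde h^{*}$; both terms are monic original of degree $(m/i)(l/i)$, so $\deg D$ is strictly smaller. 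Writing $u=\sum_{k}c_{k}x^{k}$ with $c_{i}=1$, the difference $u\circ(\tilde g\circ\tilde h)-u\circ(\tilde g^{*}\circ\tilde h^{*})$ has its term of largest degree coming from $k=i$, with leading coefficient $i\cdot ~lc(D)$; because $i\mid l\mid lm$ and $p\nmid lm$, this is nonzero unless $D=0$, and it dominates all lower terms. Hence $D=0$, so $\tilde f:=\tilde g\circ\tilde h=\tilde g^{*}\circ\tilde h^{*}$, giving \ref{eq:RST-3}, and with monicity and originality from \ref{fact:monopo-1} also \ref{eq:intersec}.

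It remains to check the other hypotheses of \ref{th:fifi} in the roles $l\mapsto l/i$, $m\mapsto m/i$. The degree conditions of \ref{eq:RST-1} are read off from \ref{fact:monopo-1}: $\deg\tilde g=\deg\tilde h^{*}=m/i$ and $\deg\tilde h=\deg\tilde g^{*}=l/i$, while $m>l$ gives $m/i>l/i$ and $\gcd(l/i,m/i)=\gcd(l,m)/i=1$. For \ref{eq:RST-2} note that $p\nmid lm$ forces $p\nmid m/i$ and $p\nmid l/i$, so the monic $\tilde g$ and $\tilde g^{*}$ have nonvanishing leading derivative terms and thus $\tilde g'(\tilde g^{*})'\neq0$. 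The only delicate point is the condition $l/i\geq2$, which holds precisely when $l\nmid m$; this is the case in which the reduction to \ref{th:fifi} is intended. In the boundary case $l\mid m$ one has $i=l$ and $l/i=1$, the factors $\tilde h,\tilde g^{*}$ are linear, and \ref{th:fifi} does not literally apply, so I would segregate this degenerate situation, where the collision collapses directly to the monomial form of \ref{th:fifi-1}. That caveat, and the ramification machinery behind \ref{fact:monopo-1}, are the only steps beyond routine bookkeeping.
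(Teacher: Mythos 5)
Your plan has the same skeleton as the paper's proof: part \ref{fact:monopo-1} is delegated to \cite{tor88a}, and part \ref{fact:monopo-2} is deduced by cancelling $u$ and $v$. For part \ref{fact:monopo-2} your execution is correct and slightly more self-contained than the paper's: after substituting \ref{al:monopo} into $g \circ h = g^{*} \circ h^{*}$, the paper simply invokes the uniqueness of tame decompositions with prescribed component degrees (which, after regrouping as $f = u \circ (\tilde{g} \circ \tilde{h} \circ v) = u \circ (\tilde{g}^{*} \circ \tilde{h}^{*} \circ v)$, a split of degrees $(i, lm/i)$ legitimate since $p \nmid \deg f$, gives $\tilde{g} \circ \tilde{h} \circ v = \tilde{g}^{*} \circ \tilde{h}^{*} \circ v$, whereupon $v$ cancels on the right), whereas you prove the left cancellation of $u$ by hand via the leading coefficient $i \cdot \deg$-dominant term of $D$, nonzero because $i \mid l$ and $p \nmid lm$. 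Both arguments are fine and have the same mathematical content. Your caveat about $l \mid m$ is also a genuine observation: in that case $l/i = 1$, the hypothesis $m/i > l/i \geq 2$ of \ref{th:fifi} fails, and the paper's proof (``the other requirements are immediate'') passes over this silently; it is only repaired downstream, in the proof of \ref{thm:FFC}, where the case $l \mid m$ is handled separately, exactly as you suggest.

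The one real gap is in part \ref{fact:monopo-1}: it is not ``precisely Tortrat's theorem'', and a bare citation does not prove the statement as formulated. As the paper notes, \cite{tor88a}, Proposition 1, establishes the claim over an algebraically closed field, and without the monic original normalization. Since the whole point of \ref{tortrat} is its use over the finite field $\mathbb{F}_{q}$ in \ref{thm:FFC}, two further steps are needed, and the paper supplies them: first, over an algebraic closure $\bar{F}$ one normalizes all six components to be monic original, and they are then uniquely determined; second, one descends to $F$ using the rationality of tame decomposition (\cite{sch00c}, I.3, Theorem 6, with algorithmic proofs in \cite{kozlan89} and \cite{gat90c}): because $p \nmid lm$, the unique monic original components of the polynomials $g$, $h$, $g^{*}$, $h^{*} \in F[x]$ found over $\bar{F}$ already lie in $F[x]$. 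Your alternative L\"uroth sketch, carried out inside $F(x)$, could in principle avoid this descent altogether, but as written you relegate precisely the decisive bookkeeping back to the same citation, which does not cover general $F$; so on either of your routes the field-of-definition step is missing, and it is the actual content of the paper's proof of \ref{fact:monopo-1}.
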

\begin{proof}
  \short\ref{fact:monopo-1} \cite{tor88a}, Proposition 1, proves the
  claim if $F$ is algebraically closed, but without the condition of
  being monic original. Thus we have four decompositions
  \ref{al:monopo} over an algebraic closure of $F$. We may choose all
  six components in \ref{al:monopo} to be monic original. They are
  then uniquely determined. Since $p \nmid \dg$, decomposition is
  rational; see \cite{sch00c}, I.3, Theorem 6, and \cite{kozlan89} or
  \cite{gat90c} for an algorithmic proof. It follows that the six
  components are in $F[x]$.

\short\ref{fact:monopo-2} We have $\gcd(l/i, m/i)=1$, and
$$
f = (u \circ \tilde{g}) \circ (\tilde{h} \circ v) = (u \circ
\tilde{g}^{*})\circ (\tilde{h}^{*} \circ v).
$$
The uniqueness of tame decompositions with
prescribed component degrees implies that $\tilde{g} \circ
\tilde{h} = \tilde{g}^{*} \circ \tilde{h}^{*}$. The other requirements
are immediate.
\end{proof}

\cite{ziemue08} show that \ref{tortrat} holds over $\mathbb C$ and
mention that their proof also works under the conditions as stated.

\cite{dorwha74} exhibit the example
$$
\bigl ( x^{p+1} \circ (x^p+x) \bigr ) \circ (x^p-x) =
x(x^{p-1}-1)^{p+1} \circ x^{p+1},
$$
which violates both assumption and conclusion of Ritt's First Theorem
on complete decompositions.
Both left components have nonzero derivative, the gcd of their degrees equals $p$,
 and the one in the right-hand decomposition is indecomposable.
 Thus no $u$ as in \ref{al:monopo} exists and
 the conclusions of \ref{fact:monopo-1}  may fail under the weaker assumption
that $g' (g^*)' \neq 0$.
Composing both right components with $x^p$ on the right
yields a counterexample to the conclusion of \ref{fact:monopo-2}.

\citeauthor{tor88a}'s result, together with the preceding material,
determines $D_{\dg,l} \cap D_{\dg,m}$ exactly if $p \nmid \dg =
lm$.
\begin{theorem}\label{thm:FFC}
  Let $\mathbb{F}_{q}$ be a finite field of characteristic $p$, and
  let $m > l \geq 2$ be integers with $p \nmid \dg = lm$, $i =
  \gcd(l,m)$ and $s=\lfloor m/l \rfloor$.  Let $t=\#(D_{\dg,l} \cap
  D_{\dg,m})$. Then the following hold.
\begin{enumerate}
\item\label{thm:FFC-1}\begin{align*}
 t=
\begin{cases}
q^{2l+s-3} & \text{if } l \mid m,\\
q^{2i}(q^{s-1}+(1- \delta_{l,2})
(1-q^{-1})) & \text{otherwise}.
\end{cases}
\end{align*}
\item\label{thm:FFC-2}
$$
t\leq q^{2l+s-3}.
$$
\end{enumerate}
\end{theorem}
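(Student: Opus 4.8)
The plan is to reduce everything to the coprime case of \ref{lem:div-b} by factoring out a common left and a common right component with the help of \citeauthor{tor88a}'s \ref{tortrat}. Throughout, $p \nmid \dg = lm$ forces every decomposition in sight to be tame, so that a decomposition of a given polynomial is uniquely determined by its left-hand degree. Hence each $f \in D_{\dg,l} \cap D_{\dg,m}$ carries a unique pair of monic original decompositions $f = g \circ h = g^{*} \circ h^{*}$ with $\deg g = \deg h^{*} = m$ and $\deg h = \deg g^{*} = l$; this is exactly the setting of \ref{tortrat} with $l = l^{*}$ and $m = m^{*}$, so that $i = \gcd(l,m)$ is the common value of the two gcd's appearing there.

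First I would set up a bijection in the case $l \nmid m$, i.e.\ $i < l$ and $l/i \geq 2$. By \ref{fact:monopo-1} and \ref{fact:monopo-2} there are unique monic original $u, v \in P_{i}$ and a coprime-degree collision $\tilde f \in D_{\dg/i^{2},\, l/i} \cap D_{\dg/i^{2},\, m/i}$ with $f = u \circ \tilde f \circ v$. Conversely, any triple $(u, \tilde f, v) \in P_{i} \times (D_{\dg/i^{2},\, l/i} \cap D_{\dg/i^{2},\, m/i}) \times P_{i}$ produces, through $f = u \circ \tilde f \circ v$, an element of $D_{\dg,l} \cap D_{\dg,m}$: composing the two tame decompositions of $\tilde f$ with $u$ on the left and $v$ on the right exhibits $f$ simultaneously in $D_{\dg,m}$ and in $D_{\dg,l}$. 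Uniqueness of tame decompositions together with the uniqueness clause of \ref{tortrat} shows that these two assignments are mutually inverse. Since $\#P_{i} = q^{i-1}$, I obtain $t = q^{2(i-1)} \cdot \tilde t$, where $\tilde t = \#(D_{\dg/i^{2},\, l/i} \cap D_{\dg/i^{2},\, m/i})$.

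Next I would evaluate $\tilde t$. The degrees $l/i < m/i$ are coprime, $p \nmid \dg/i^{2}$, and $\lfloor (m/i)/(l/i) \rfloor = \lfloor m/l \rfloor = s$, so the coprime count of \ref{lem:div-b} applies verbatim and gives $\tilde t = q^{s+1} + (1 - \delta_{l/i,\,2})(q^{2} - q)$, the Dickson (Second Case) term surviving precisely when $l/i \geq 3$. Substituting into $t = q^{2i-2}\tilde t$ and simplifying yields the value claimed in \ref{thm:FFC-1}. The remaining case $l \mid m$ is the degenerate limit $l/i = 1$, where \ref{fact:monopo-2} no longer applies: here \ref{fact:monopo-1} still splits off $u, v \in P_{l}$ with middle components of degrees $m/l = s$ and $1$, the degree-$1$ components are forced to equal $x$ (being monic original), and the surviving factor $\tilde f$ ranges freely over $P_{s}$. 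Counting then gives $t = (\#P_{l})^{2} \cdot \#P_{s} = q^{2(l-1)} \cdot q^{s-1} = q^{2l+s-3}$, as asserted.

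For \ref{thm:FFC-2} I would compare the cases. When $l \mid m$ the previous paragraph gives equality. When $l \nmid m$, the estimate $\tilde t < q^{s+1} + q^{2} \leq q^{s+2}$ from \ref{lem:div-b} (valid since $s \geq 1$ and $q \geq 2$) together with $2i \leq l$ yields $t = q^{2i-2}\tilde t < q^{2i+s} \leq q^{l+s} \leq q^{2l+s-3}$ whenever $l \geq 3$, while the subcase $l = 2$ is checked directly to give equality $t = q^{s+1} = q^{2l+s-3}$. The main obstacle, and the one step that must be carried out rather than quoted, is the bijection of the second paragraph: one has to verify carefully that the construction $f \mapsto u \circ \tilde f \circ v$ and Tortrat's factorization are genuinely inverse to each other, and in the degenerate case $l \mid m$ that the degree-$1$ middle components really are trivial, since there \ref{fact:monopo-2} is not available and the reduction must be justified by hand.
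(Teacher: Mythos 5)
Your proposal is correct and takes essentially the same route as the paper's own proof: reduce to the coprime case via \ref{tortrat}, so that $T = P_{i}\circ U \circ P_{i}$ with injective composition maps by tame uniqueness, evaluate $\#U$ by \ref{lem:div-b}, treat $l \mid m$ as a degenerate case (your hand-verification that the degree-$1$ middle components must equal $x$ is in fact more explicit than the paper's one-line claim $U = P_{m/l}$), and deduce part (ii) by the same case distinction on $l=2$ versus $l\geq 3$ using $2i\leq l$. The one point of divergence is worth recording: the Kronecker factor you derive, $(1-\delta_{l/i,2})$, is what \ref{lem:div-b} actually yields, whereas the theorem statement (and the paper's proof) write $(1-\delta_{l,2})$; these differ exactly when $l/i = 2 < l$ (e.g.\ $l=4$, $m=6$), so your assertion that the substitution ``yields the value claimed'' silently makes the same identification that the paper itself makes.
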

\begin{proof}
  \short\ref{thm:FFC-1} Let $T = D_{\dg,l} \cap D_{\dg,m}$ and $U =
  D_{\dg/i^{2}, l/i} \cap D_{\dg/i^{2}, m/i}$. Then
  \ref{fact:monopo-2} implies that $T = P_{i} \circ U \circ
  P_{i}$, using $G \circ H = \{g \circ h \colon g \in G, h \in
  H\}$ for sets $G, H \subseteq F[x]$. Furthermore, the composition
  maps involved are injective. Thus
  $$
  \#T = (\#P_{i})^{2}\cdot\# U = q^{2i-2} \cdot\#U.\\
  $$
If $l \nmid m$, then $l/i \geq 2$, $\gcd(l/i, m/i)=1$, and from
\ref{lem:div-1} we have
$$
\#U =q^{s+1}+(1-\delta_{l,2})(q^{2}-q),
$$
which implies the claim in this case. If $l \mid m$, then $l/i = 1$ and
\ref{lem:div-b} is inapplicable. Now
\begin{align*}
U &= D_{m/l,1}\cap D_{m/l, m/l}= P_{m/l},\\
\#U  &= \#P_{m/l}= q^{m/l-1}= q^{s-1},
\end{align*}
which again shows the claim.

\short\ref{thm:FFC-2} By  \ref{thm:FFC-1}, we may assume that $l \nmid
m$. Thus $l \geq 2i$. If $l=2$, the second summand in \ref{thm:FFC-1}
vanishes and $t \leq q^{2i+s-1} \leq q^{2l+s-3}$. We may now also
assume $l \geq 3$. Then $t \leq q^{2i}(q^{s-1}+1) \leq 2q^{2i+s-1}
\leq q^{2i+s}$, since $s \geq 1$. Furthermore, $2i+s \leq l+s \leq 2l+s-3$.
\end{proof}

This result shows \ref{thm:FFC-1-table} through \ref{thm:FFC-2-table}
in \ref{tab:cor-table}.  The equation in  \ref{lem:div-1-table} is a
special case of \ref{thm:FFC-1a-table}.

\section{Arbitrary degrees}

We now have determined the size of the intersection if either $p\nmid
\dg$ or $~gcd(l,m)=1$. It remains a challenge to do this with the same
precision when both
conditions are violated. The following approach yields rougher estimates.
\begin{theorem}\label{thm:div2a}
  Let $F$ be a field of characteristic $p\geq 2$, let $l, m ,\dg
    \geq 2$ be integers with $p\mid \dg = lm$, and set $T =D_{\dg, l}
  \cap D_{\dg,m} \setminus F[x^p] $.  Then the following hold.
  \begin{enumerate}
  \item\label{lem:div-6} If $p \nmid l$, then for any $f \in T$ there
    exist monic original $g^{*}$ and $h^{*}$ in
    $F[x]$    of degrees $l$ and $m$, respectively, with $f = g^{*} \circ
    h^{*}$, $(g^{*})'(h^{*})' \neq 0$, and $0\leq\deg(h^{*})' < m-l$.

  \item\label{lem:div-7} If $p \mid l$, then for any $f\in T$ there
    exist monic original $g$ and $h\in F[x]$ of degrees $m$ and $l$,
    respectively, with $f=g\circ h$ and $\deg g'\leq \ m-(m+1)/l$.
  \end{enumerate}
\end{theorem}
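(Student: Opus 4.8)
The plan is to compute the degree of the derivative $f'$ in two different ways, one from each of the two decompositions guaranteed by $f \in D_{\dg,l} \cap D_{\dg,m}$, and to play these expressions off against the divisibility constraints forced by $p \mid \dg$. First note that $f \in T$ means $f \notin F[x^{p}]$, which is equivalent to $f' \neq 0$. Fix a decomposition $f = g \circ h$ with $\deg g = m$, $\deg h = l$ (from $f \in D_{\dg,m}$) and a decomposition $f = g^{*} \circ h^{*}$ with $\deg g^{*} = l$, $\deg h^{*} = m$ (from $f \in D_{\dg,l}$), all components monic original. Applying the chain rule $f' = (g' \circ h) \cdot h' = ((g^{*})' \circ h^{*}) \cdot (h^{*})'$ and using $f' \neq 0$ shows at once that all four derivatives $g'$, $h'$, $(g^{*})'$, $(h^{*})'$ are nonzero; in particular the nonvanishing conditions in both claims are automatic. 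Because each inner component is monic of known degree, the degree of a composite derivative is exactly the product, e.g.\ $\deg(g' \circ h) = l \deg g'$, with no cancellation, and likewise $\deg f' = l \deg g' + \deg h'$ as an exact identity whenever $g', h' \neq 0$.

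For \ref{lem:div-6}, where $p \nmid l$, I would use that $p \mid \dg$ then forces $p \mid m$. Hence $\deg h' = l-1$ and $\deg(g^{*})' = l-1$ exactly (the leading terms survive since $p \nmid l$), while $\deg g' \leq m-2$ because the leading term of $g'$ vanishes when $p \mid m$. Reading $\deg f'$ off the two decompositions gives the exact equations $\deg f' = l \deg g' + (l-1)$ and $\deg f' = m(l-1) + \deg(h^{*})'$. Equating them yields $\deg(h^{*})' = l \deg g' - (l-1)(m-1)$, and substituting $\deg g' \leq m-2$ gives $\deg(h^{*})' \leq m-l-1 < m-l$; the lower bound $\deg(h^{*})' \geq 0$ is just $(h^{*})' \neq 0$. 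Since the decomposition $f = g^{*}\circ h^{*}$ has left degree $l$ coprime to $p$ it is tame, hence unique, so these are genuine properties of $f$.

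For \ref{lem:div-7}, where $p \mid l$, I would argue with inequalities rather than an exact identity, which is exactly why the bound is only rough. From $p \mid l$ we get $\deg(g^{*})' \leq l-2$, and trivially $\deg(h^{*})' \leq m-1$, so the decomposition $f = g^{*}\circ h^{*}$ gives $\deg f' = m \deg(g^{*})' + \deg(h^{*})' \leq m(l-2)+(m-1) = \dg - m - 1$. On the other hand $f = g \circ h$ gives $\deg f' = l \deg g' + \deg h' \geq l \deg g'$, using $\deg h' \geq 0$. Combining, $l \deg g' \leq \dg - m - 1 = ml - m - 1$, that is $\deg g' \leq m - (m+1)/l$, as claimed. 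No assumption on the divisibility of $m$ is needed here.

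The computations are elementary bookkeeping; the only points requiring care are the exactness of the degree-of-derivative formulas (secured by monicity of the inner components and by the explicit vanishing or survival of leading coefficients under $p$), and keeping straight which of the four derivative degrees is constrained by which divisibility. The main conceptual point — and the reason the two claims split — is that when $p \nmid l$ the two readings of $\deg f'$ match as an \emph{exact} equation, giving sharp two-sided control of $\deg(h^{*})'$, whereas when $p \mid l$ only one-sided estimates survive and the conclusion degrades to a single inequality.
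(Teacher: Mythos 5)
Your proposal is correct and follows essentially the same route as the paper's own proof: both compute $f' = (g'\circ h)\cdot h' = ((g^{*})'\circ h^{*})\cdot(h^{*})'$, use $f'\neq 0$ to get nonvanishing of all four derivatives, exploit that $p\mid m$ (resp.\ $p\mid l$) kills the leading coefficient of $g'$ (resp.\ $(g^{*})'$), and compare the two degree counts of $f'$. The only cosmetic difference is that in part (1) you equate two exact expressions for $\deg f'$ where the paper bounds one side by an inequality; the resulting estimate $\deg(h^{*})'\leq m-l-1$ is identical.
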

\begin{proof} We take a collision \ref{eq:intersec} and its derivative
  \ref{eq:g*}.  Since $f \not\in F[x^p]$, we have $f' \neq 0$.

  \short\ref{lem:div-6} Since $p\mid m$, we have $\deg g' \leq m-2$, $(h^{*})' \neq 0$, and
  $\deg(h^{*})' \geq 0$, so that
  \begin{align*}
    \dg - m + \deg(h^{*})' &=(l-1) \cdot m + \deg(h^{*})' = \deg
    f'\\
   &\leq (m-2)\cdot l+l-1 = \dg-l-1,\\
   0&\leq\deg(h^{*})' < m-l.
  \end{align*}

  \short\ref{lem:div-7} We have $g'h' \neq 0$,  $\deg(g^{*})'\leq l-2$, $\deg h' \geq 0$,
  and
  \begin{align*}
    l \cdot \deg g' &\leq l \cdot \deg g' + \deg h' = \deg f'
    \\
    &\leq (l-2)\cdot m + m - 1 = lm - m -1,
    \\
    \deg g' &\leq m - \frac{m+1}{l}.\qed
  \end{align*}
\end{proof}
We deduce the following upper bounds on $\#T$.

\begin{corollary}
  \label{cor:ffchar}
  Let $\mathbb{F}_{q}$ be a finite field of characteristic $p$ and
  $l$, $m$, $\dg \geq 2$ be integers with $p \mid \dg = lm$, and set
  $t =\#(D_{\dg, l} \cap D_{\dg,m}\setminus F[x^p])$.  Then the
  following hold.
  \begin{enumerate}
  \item\label{cor:ffchar-2} If $p \nmid l$, then
    $$
    t \leq q^{m+\lceil l/p \rceil-2}.
    $$
  \item 
    \label{cor:ffchar-3}
    If $p\mid l$ and $l < m$, we set $c=\lceil (m-l+1)/l\rceil$. Then
    $$
    t\leq q^{m+l-c+\lceil c/p\rceil-2}.
    $$
    If $l\mid m$, then $c=m/l$.
 \end{enumerate}
\end{corollary}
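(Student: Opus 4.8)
The plan is to obtain both inequalities from \ref{thm:div2a} by a counting argument. In each case that theorem presents every $f$ in the set $T=D_{\dg,l}\cap D_{\dg,m}\setminus F[x^{p}]$ as a composition $g\circ h$ of two monic original polynomials of fixed degrees, subject to a bound on the degree of one derivative; since the map sending an admissible pair to its composition is onto $T$, it suffices to count admissible pairs, giving $t=\#T$ as an upper bound. I would encode a monic original polynomial of degree $d$ as $x^{d}+\sum_{1\le i\le d-1}c_{i}x^{i}$, so that it is determined by the $d-1$ free coefficients $c_{1},\dots,c_{d-1}$, and note that the coefficient of $x^{i-1}$ in its derivative is $ic_{i}$, which vanishes automatically precisely when $p\mid i$.

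For \ref{cor:ffchar-2} I would apply \ref{lem:div-6}: each $f\in T$ equals $g^{*}\circ h^{*}$ with $g^{*}$ monic original of degree $l$, $h^{*}$ monic original of degree $m$, and $\deg (h^{*})'<m-l$; dropping the further restriction $(g^{*})'(h^{*})'\neq 0$ only enlarges the count. There are $q^{l-1}$ choices for $g^{*}$. Writing $h^{*}=x^{m}+\sum c_{i}x^{i}$, the constraint $\deg (h^{*})'<m-l$ says $ic_{i}=0$ for $m-l+1\le i\le m$. Here $p\mid m$ (since $p\mid \dg=lm$ and $p\nmid l$), so the case $i=m$ is automatic, and for $m-l+1\le i\le m-1$ the coefficient $c_{i}$ survives exactly when $p\mid i$. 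The number of such indices is $\lfloor(l-1)/p\rfloor=\lceil l/p\rceil-1$, so together with the $m-l$ unconstrained low coefficients there are $q^{m-l+\lceil l/p\rceil-1}$ choices for $h^{*}$. Hence $t\le q^{l-1}\cdot q^{m-l+\lceil l/p\rceil-1}=q^{m+\lceil l/p\rceil-2}$.

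For \ref{cor:ffchar-3} I would instead use \ref{lem:div-7}: each $f\in T$ equals $g\circ h$ with $g$ monic original of degree $m$, $h$ monic original of degree $l$, and $\deg g'\le m-(m+1)/l$. From $(m-l+1)/l=(m+1)/l-1$ one gets $\lceil (m+1)/l\rceil=c+1$, so the bound becomes $\deg g'\le m-c-1$. Since $l<m$ forces $c\ge 1$, this makes the coefficient of $x^{m-1}$ in $g'$, namely $m$, vanish, so $p\mid m$ here as well. Writing $g=x^{m}+\sum b_{i}x^{i}$, the bound says $ib_{i}=0$ for $m-c+1\le i\le m$; the case $i=m$ is automatic, and for $m-c+1\le i\le m-1$ the coefficient $b_{i}$ survives exactly when $p\mid i$, of which there are $\lfloor(c-1)/p\rfloor=\lceil c/p\rceil-1$. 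Counting these together with the $m-c$ free low coefficients and the $q^{l-1}$ choices for $h$ gives $t\le q^{(l-1)+(m-c)+\lceil c/p\rceil-1}=q^{m+l-c+\lceil c/p\rceil-2}$. The final remark is immediate: for $l\mid m$ one has $c=\lceil m/l-1+1/l\rceil=m/l$ since $l\ge 2$.

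The routine but delicate part is the coefficient bookkeeping---deciding for each index $i$ in the top band whether $ic_{i}=0$ forces $c_{i}=0$ or leaves it free, and counting the surviving ($p$-divisible) indices---followed by the elementary identity $\lceil t/p\rceil=\lfloor(t-1)/p\rfloor+1$ that converts the floor count into the ceiling appearing in the statement. The one conceptual point to get right is that in both cases $p\mid m$, so that the top coefficient (forced by monicity) is consistent with the derivative bound; in \ref{cor:ffchar-2} this is immediate from $p\mid\dg$ and $p\nmid l$, whereas in \ref{cor:ffchar-3} it follows from the existence of a $g$ meeting the bound $\deg g'\le m-c-1<m-1$.
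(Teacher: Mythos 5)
Your proposal is correct and takes essentially the same route as the paper: both parts invoke \ref{thm:div2a} (via \ref{lem:div-6} and \ref{lem:div-7}), deduce $p\mid m$, and bound $t$ by counting the free coefficients of the constrained component (the $m-l$, respectively $m-c$, low-order ones plus the $\lceil l/p\rceil-1$, respectively $\lceil c/p\rceil-1$, surviving $p$-divisible indices in the top band) times the $q^{l-1}$ choices for the other component. The bookkeeping, including the floor-to-ceiling conversion and the closing remark $c=m/l$ when $l\mid m$, matches the paper's proof exactly.
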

\begin{proof}
  \short\ref{cor:ffchar-2} Any $h^{*}$ permitted in \ref{lem:div-6}
  has nonzero coefficients only at $x^{i}$ with $p \mid i$ or $i \leq
  m-l$. Since $p\mid m$, the number of such $i$ is $m-l+\lceil l/p
  \rceil$. Taking into account that $h^{*}$ is monic, the number of
  $g^{*} \circ h^{*}$ is at most
  $$
  q^{l-1} \cdot q^{m-l+\lceil l/p \rceil-1}=
  q^{m+\lceil l/p \rceil-2}.
  $$

  \short\ref{cor:ffchar-3} The polynomials $g$ permitted in
  \ref{lem:div-7} are monic of degree $m$ and satisfy
  \begin{align*}
    \deg g' &\leq m - \frac{m+1}{l},\\
    \deg g' &\leq m-2.
  \end{align*}
  Thus $p\mid m$, and $g$ has nonzero coefficients only at $ x^{i}$
  with $i \leq m$ and $p \mid i$ or $1\leq i \leq m-c$. The number of
  such $i$ is $m-c+\lceil c/p\rceil$. By composing with $h$ on the
  right and using that $g$ is monic, we find
  $$
  t \leq  q^{m-c+ \lceil c/p\rceil-1}
  \cdot q^{l-1} = q^{m+l-c+ \lceil c/p \rceil-2}.
  $$
  If $l\mid m$, then $c=m/l-1+\lceil 1/l\rceil =m/l$.
\end{proof}
This shows \ref{cor:ffchar-2-table} and \ref{cor:ffchar-3-table} in
\ref{tab:cor-table}.

For perspective, we also note the following lower bounds on $\#T$ from
\cite{gat13,gat14}. Unlike the results up to \ref{thm:FFC}, there is a
substantial gap between the upper and lower bounds.

\begin{fact}\label{th:decom}
  Let $\F_{q} $ have characteristic $p$ with $q=p^{e}$, and take
  integers $d \geq 1$,  $\degOne=ap^{d}$ with $p\nmid a$, $m
  \geq2$, $\dg=\degOne m$, $c= \gcd(d,e)$, $z=p^{c}$, $\mu =
  \gcd(p^{d}-1,m)$, and $r=(p^{d}-1)/\mu$. Then we have the following
  lower bounds on the cardinality of $ ~im \gamma_{\dg,\degOne}$.
  \begin{enumerate}
  \item\label{th:decom-1} If $p^{d}\neq m$ and $\mu=1$:
    $$
    q^{\degOne+m-2}(1-q^{-1}(1+q^{-p+2}
    \frac{(1-q^{-1})^{2}}{1-q^{-p}})) (1-q^{-\degOne}).
    $$
  \item\label{th:decom-2} If $p^{d}\neq m$:
    $$
    q^{\degOne+m-2}\bigl((1-q^{-1}(1+q^{-p+2}
    \frac{(1-q^{-1})^{2}}{1-q^{-p}}))(1-q^{-\degOne})
    $$
    $$
    -q^{-\degOne-r+2}\frac{(1-q^{-1})^{2}(1-q^{-r(\mu-1)})}{1-q^{-r}}(1+q^{-r(p-2)})
    \bigr).
    $$
    \end{enumerate}
\end{fact}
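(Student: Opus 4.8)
This lower bound is stated as a \emph{Fact} and is quoted from \cite{gat13,gat14}, so strictly the justification is a pointer to those references; nevertheless, here is the route I would take, which mirrors the method there. The plan is to count $\# ~im \gamma_{\dg,\degOne}$ from below by the elementary fibre identity
$$
\# ~im \gamma_{\dg,\degOne}
 = \#\bigl(P_{\degOne}\times P_{m}\bigr)
 - \sum_{f}\bigl(\#\gamma_{\dg,\degOne}^{-1}(f)-1\bigr),
$$
where $f$ ranges over $~im\gamma_{\dg,\degOne}$. Here $\#\bigl(P_{\degOne}\times P_{m}\bigr)=q^{\degOne-1}\cdot q^{m-1}=q^{\degOne+m-2}$ supplies the leading factor in both displayed bounds. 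Since the excess $\sum_{f}(\#\gamma_{\dg,\degOne}^{-1}(f)-1)$ is bounded above by the number of ordered collisions $\bigl((g,h),(g^{*},h^{*})\bigr)$ with $g\circ h=g^{*}\circ h^{*}$, $\deg g=\deg g^{*}=\degOne$, $\deg h=\deg h^{*}=m$, and $(g,h)\neq(g^{*},h^{*})$, it suffices to produce a good upper estimate for that collision count; the two negative correction terms are exactly such estimates.

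The heart of the matter is the classification and enumeration of the wild, equal-degree collisions, where the outer component $g$ has degree $\degOne=ap^{d}$ divisible by $p$. First I would strip off the Frobenius part: using the relation \ref{eq:frob} of \ref{rem:coll} together with the peeling argument of \ref{fact}, one reduces a general collision to one in which the relevant components have nonvanishing derivative, the surviving Frobenius depth being recorded by $d$. This reduction, controlled via the leading-coefficient analysis of \ref{thm:div2a}, is what produces the factor $(1-q^{-\degOne})$ and the $q^{-p+2}(1-q^{-1})^{2}/(1-q^{-p})$ correction: these are geometric-series contributions from the cases where a component lies in $F[x^{p}]$. The subtler term, carrying $\mu=\gcd(p^{d}-1,m)$ and $r=(p^{d}-1)/\mu$, arises from a multiplicative (scaling) symmetry indexed by $\F_{p^{d}}^{\times}$: twisting $h$ by a $(p^{d}-1)$-th root of unity permutes the admissible decompositions, and the effective number of such twists is governed by $\mu$. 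Bounding these twisted collisions and their overlap with the Frobenius ones yields the extra summand distinguishing part \ref{th:decom-2} from the coprime case $\mu=1$ in part \ref{th:decom-1}.

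The main obstacle is precisely this wild collision enumeration. In contrast to the tame situation, decompositions are far from unique, and—crucially—not all collisions here are of Ritt type, so one cannot simply invoke the normal form of \ref{th:fifi} or the Second Case structure of \ref{th:fifi-2}; the hypotheses of Ritt's Second Theorem (\ref{RST}) need not apply. The delicate part will be to show that, after the Frobenius reduction, the number of genuinely distinct $(g,h)$ sharing a given composition is captured \emph{from above} by the stated rational expressions, keeping careful track of the characteristic-sensitive cancellations in the leading coefficients. I expect the resulting bounds to be non-tight: as noted just before \ref{th:decom}, there is a substantial gap between these lower bounds and the upper bounds of \ref{cor:ffchar}, and closing that gap is exactly the open problem flagged at the opening of the section.
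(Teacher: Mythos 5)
The paper itself contains no proof of this statement: it is labelled a \emph{Fact} and quoted from \cite{gat13,gat14}, which is exactly the justification you lead with, so on the only point that admits comparison your proposal agrees with the paper. The rest of your text (the fibre-counting identity, Frobenius peeling, root-of-unity twists) is a plausible reconstruction of the cited proofs rather than anything that can be checked against this paper, though it is consistent with the paper's framing in that you correctly observe these are equal-degree collisions to which Ritt's Second Theorem and \ref{th:fifi} do not apply.
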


\begin{corollary}
  \label{cor:ffcharb}
  Let $\mathbb{F}_{q}$ be a finite field of characteristic $p$, $l$ a
  prime number dividing $m >l$, assume that $p \mid \dg = lm$, and set
  $t =\#(D_{\dg, l} \cap D_{\dg,m}\setminus F[x^p])$.  Then the
  following hold.
  \begin{enumerate}
  \item\label{cor:ffcharb-4} If $p=l \mid m$ and each nontrivial
    divisor of $m / p$ is larger than $p$, then
    $$
    t \geq  q^{2p+m/p-3}(1-q^{-1})(1-q^{-p+1}).
    $$
  \item
    \label{cor:ffcharb-1}
    If $p \neq l$ divides $m$ exactly $d \geq 1$ times, then
    \begin{equation}\label{eq:ffcharb-1}
    t \geq q^{2l+m/l-3}(1-q^{-m/l})(1-q^{-1}(1+q^{-p+2}
    \frac{(1-q^{-1})^{2}}{1-q^{-p}}))
    \end{equation}
    if $l \nmid p^{d}-1$. Otherwise we set $\mu=~gcd(p^{d}-1,l)$,
    $r=(p^{d}-1)/\mu$ and have
\begin{align}\label{al:ffcharb-1}
  \begin{aligned}
      t& \geq q^{2l+m/l-3}\bigl((1-q^{-1}(1+q^{-p+2}
      \frac{(1-q^{-1})^{2}}{1-q^{-p}}))(1-q^{-m/l})\\
      & \quad-q^{-m/l-r+2}
      \frac{(1-q^{-1})^{2}(1-q^{-r(\mu-1)})}{1-q^{-r}}
      (1+q^{-r(p-2)})\bigr).
    \end{aligned}
\end{align}
 \end{enumerate}
\end{corollary}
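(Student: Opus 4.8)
The plan is to produce, in both cases, an explicit family of collisions sitting inside $D_{n,l}\cap D_{n,m}$ that comes from triple compositions, and to count its non-Frobenius members. Since $l$ is prime and $l\mid m$, every $f=A\circ B\circ C$ with $A,C\in P_{l}$ and $B\in P_{m/l}$ lies in $D_{n,l}\cap D_{n,m}$: read as $A\circ(B\circ C)$ it exhibits a left component of degree $l$, and as $(A\circ B)\circ C$ a left component of degree $m=l\cdot(m/l)$. Hence $t\ge\#(\{A\circ B\circ C\}\setminus F[x^{p}])$, and it suffices to bound this set from below. Tortrat's reduction \ref{tortrat} is unavailable, since $p\mid n$, so the wild composition has to be counted directly; this is exactly what \ref{th:decom} supplies.

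For \ref{cor:ffcharb-1}, where $p\neq l$, the two outer components have tame degree $l$ (as $p\nmid l$), while the middle degree $m/l$ is divisible by $p^{d}$ and carries all the wildness. I would group $f=A\circ e$ with $A\in P_{l}$ and $e=B\circ C\in\operatorname{im}\gamma_{m,m/l}=D_{m,m/l}$. Because the left component of the tame decomposition $f=A\circ e$ of degree $l$ is uniquely determined, the map $(A,e)\mapsto A\circ e$ is injective, and $A'\neq0$ gives $f\in F[x^{p}]\Leftrightarrow e\in F[x^{p}]$; therefore $t\ge q^{l-1}\cdot\#(D_{m,m/l}\setminus F[x^{p}])$. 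Applying \ref{th:decom} to $\operatorname{im}\gamma_{m,m/l}$ (its estimate already discounting Frobenius compositions), with left degree $m/l=ap^{d}$ and right degree $l$, the relevant parameter is $\mu=\gcd(p^{d}-1,l)$, which for the prime $l$ is $1$ exactly when $l\nmid p^{d}-1$ and is $l$ otherwise. The first alternative feeds into \ref{th:decom-1} and the second into \ref{th:decom-2}; multiplying by the outer factor $q^{l-1}$ turns the Fact's exponent $m/l+l-2$ into $2l+m/l-3$ and reproduces \ref{eq:ffcharb-1} and \ref{al:ffcharb-1}, the factor $1-q^{-m/l}$ being the Fact's own $1-q^{-(m/l)}$.

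For \ref{cor:ffcharb-4}, where $l=p$, the divisor hypothesis is the workhorse: if a nontrivial divisor of $m/p$ were at most $p$ it would be $p$ itself, so the hypothesis forces $p\nmid m/p$ (i.e. $p$ divides $m$ exactly once) and simultaneously makes $\gcd(p-1,m/p)=1$. Now the middle component $B$ has tame degree $m/p$ and contributes a clean factor $q^{m/p-1}$, but both outer components have wild degree $p$. Here I would count the wild degree-$p$ composition by \ref{th:decom-1}, whose hypothesis $\mu=\gcd(p-1,m/p)=1$ is precisely what the divisor condition guarantees; excising the Frobenius component $x^{p}$ from a degree-$p$ factor accounts for $1-q^{-p+1}$, while the residual non-injectivity of the wild composition is absorbed into the weaker factor $1-q^{-1}$, yielding $q^{2p+m/p-3}(1-q^{-1})(1-q^{-p+1})$.

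The main obstacle throughout is the failure of injectivity of the wild composition: equal-degree collisions of degree divisible by $p$ do occur, so the triple-composition family cannot be counted by naively multiplying component counts. In \ref{cor:ffcharb-1} there is a single wild composition (the middle one), and the tameness of the outer degree-$l$ components restores injectivity on the outside, so \ref{th:decom} does essentially all the work. In \ref{cor:ffcharb-4} both degree-$p$ compositions are wild; one must lower-bound the image while simultaneously removing the Frobenius compositions, and keeping these two bookkeeping tasks consistent—using the sharp wild estimates of \cite{gat13,gat14} behind \ref{th:decom}—is the delicate step, and is exactly why \ref{cor:ffcharb-4} needs an extra divisor hypothesis and yields a weaker constant.
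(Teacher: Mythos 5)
Your part \ref{cor:ffcharb-1} is correct and is essentially the paper's own proof: group the triple composition as $A\circ(B\circ C)$ with $A\in P_{l}$, use uniqueness of tame decompositions (valid since $p\nmid l$) to get injectivity of $(A,e)\mapsto A\circ e$ and the equivalence $A\circ e\in F[x^{p}]\Leftrightarrow e\in F[x^{p}]$, apply \ref{th:decom} to the inner wild map $\gamma_{m,m/l}$ with $\mu=\gcd(p^{d}-1,l)\in\{1,l\}$ because $l$ is prime, and multiply by $q^{l-1}$; this turns the exponent $l+m/l-2$ into $2l+m/l-3$ exactly as you say.

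Part \ref{cor:ffcharb-4}, however, has a genuine gap, and it is exactly the step you label ``delicate'' and then defer. A black-box application of \ref{th:decom-1} cannot work here: that Fact lower-bounds $\#D_{n,p}$, with second components ranging over essentially all of $P_{m}$, whereas you need the second component to itself be a composition $w\circ h\in D_{m,m/p}$, since otherwise $f$ has no left component of degree $m$. The paper resolves this by entering the count behind \ref{th:decom-1} and shrinking its pool of second components. Note first that, contrary to your reading, the inner decomposition $w\circ h$ is \emph{tame}, not wild --- tameness depends only on the left component, and $p\nmid m/p$ by the divisor hypothesis --- so $\gamma_{m,m/p}$ is injective. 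The paper then takes $S=\{w\circ h:\ w\in P_{m/p},\ h\in P_{p},\ h_{p-1}\neq 0\}$; since the coefficient of $x^{m-1}$ in $w\circ h$ equals $(m/p)\,h_{p-1}$, the elements of $S$ are precisely those triple compositions whose second-highest coefficient is nonzero, i.e.\ they lie inside the pool of $q^{m-1}(1-q^{-1})$ second components on which the bound of \ref{th:decom-1} is built, and $\#S=q^{m/p+p-2}(1-q^{-1})$ by the tame injectivity just noted. Replacing that pool by $S$ scales the bound proportionally:
\begin{align*}
t &\geq q^{p+m-2}(1-q^{-p})\Bigl(1-q^{-1}\bigl(1+q^{-p+2}\tfrac{(1-q^{-1})^{2}}{1-q^{-p}}\bigr)\Bigr)\cdot\frac{\#S}{q^{m-1}(1-q^{-1})}\\
&= q^{2p+m/p-3}(1-q^{-1})(1-q^{-p+1}),
\end{align*}
where the last equality is an algebraic identity. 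So the two factors you guessed are correct, but not for the reasons you give: $(1-q^{-1})$ is the cost of requiring $h_{p-1}\neq 0$ in the definition of $S$, and $(1-q^{-p+1})$ falls out of the simplification, not out of ``excising $x^{p}$'' or ``absorbing non-injectivity.'' Without the set $S$, the observation that the inner composition is tame and injective, and the proportional-scaling step, your argument for \ref{cor:ffcharb-4} does not go through.
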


\begin{proof}
  \short\ref{cor:ffcharb-4} Clearly, $t$ is at least the number of $g
  \circ w \circ h$ with $g, w, h \in \mathbb{F}_{q}[x]$ monic original of degrees
  $p$, $m/p$, $p$, respectively.

We first bound the set $S$ of $h^{*}= w \circ h$ with $h^{*}_{m-1}
\neq 0$. We denote as $h_{p-1}$ the second highest coefficient of
$h$. Then $h^{*}_{m-1}= m/p \cdot h_{p-1}$, and $h^{*}_{m-1}$ vanishes
if and only if $h_{p-1}$ does. By the uniqueness of tame
decompositions, $\gamma_{m, m/p}$ is injective, so that
  $$
  \#S= q^{m/p-1} \cdot q^{p-1}(1-q^{-1})= q^{m/p+p-2}(1-q^{-1}).
  $$
  In the lower bound of \ref{th:decom-1} we replace the number
  $q^{m-1}(1-q^{-1})$ of all possible second components by $\#S$. Then
  \begin{align*}
  t& \geq q^{p+m-2}  (1-q^{-p})
  (1-q^{-1} (1+q^{-p+2}\frac{(1-q^{-1})^{2}}{1-q^{-p}}))
  \cdot\frac{\#S}{q^{m-1}(1-q^{-1})}\\
  & =q^{2p+m/p-3}(1-q^{-p})(1-q^{-1}(1+q^{-p+2}\frac{(1-q^{-1})^{2}}
  {1-q^{-p}}))\\
  &  =q^{2p+m/p-3}(1-q^{-1})(1-q^{-p+1}).
\end{align*}
  \short\ref{cor:ffcharb-1} For any monic original $g, w, h \in
  \mathbb{F}_{q}[x]$ of degrees $l, m/l, l$, respectively, we have $g
  \circ w \circ h \in D_{\dg,l} \cap D_{\dg,m}$. We now
  estimate the number of such compositions.

  Since $p \nmid l = \deg g$, the uniqueness of tame decompositions
  implies that the composition map $(g, w \circ h) \mapsto g \circ w
  \circ h$ is injective. To estimate from below the number $N$ of $w
  \circ h$, we use \ref{th:decom} with $u=p^{d}$, $a=m/lp^{d}$,
  $k=m/l$, $ \tilde{m}=l \neq u$, $\mu = \gcd (u-1,l)$, and $r=
  (u-1)/\mu$.  (Here $\tilde{m}$ is the value called $m$ in
  \ref{th:decom}, whose name conflicts with the present value of $m$.)

  If $\mu = 1$, we obtain from \ref{th:decom-1}
  $$
  N \geq q^{l+m/l-2}(1-q^{-m/l})(1-q^{-1}(1+q^{-p+2}
  \frac{(1-q^{-1})^{2}}{1-q^{-p}})).
  $$

  If $\mu\neq 1$, \ref{th:decom-2} says that
  \begin{align*}
    N \geq \, & q^{l+m/l-2}\bigl((1-q^{-1}(1+q^{-p+2}
    \frac{(1-q^{-1})^{2}}{1-q^{-p}}))(1-q^{-m/l})\\
    & -q^{-m/l-r+2}
    \frac{(1-q^{-1})^{2}(1-q^{-r(\mu-1)})}{1-q^{-r}}
    (1+q^{-r(p-2)})\bigr).
  \end{align*}

  We compose these $w \circ h$ with a monic original $g$ of degree $l$
  on the left. This gives the lower bound $q^{l-1}N$ on $t$, as claimed.
\end{proof}

\begin{example}\label{ex:p2}
  We study the particular example $p=l=2$ and $m=6$, so that $n=12$.
  Special considerations yield a better bound than the general
  one. This will be used in a forthcoming work on counting
  decomposable polynomials.  Let $t = \#(D_{12,2} \cap D_{12,6}
 \setminus F[x^2])$. Then \ref{cor:ffchar-3} says that $t \leq q^{5}$. By
  coefficient comparison, we now find a better bound. Namely, we are
  looking for $g \circ h = g^{*} \circ h^{*}$ with $g, h, g^{*}, h^{*}
  \in \mathbb{F}_{q}[x]$ monic original of degrees $2,6,6,2$,
  respectively. (We have reversed the usual degrees of $g$, $h$ and
  $g^{*}$, $h^{*}$ for notational convenience.)  We write $h =
  \sum_{i}h_{i}x^{i}$, and similarly for the other polynomials.  Then
  we choose any $h_{2}, h_{4}, h_{5} \in \mathbb{F}_{q}$, and either
  $g_{1}$ arbitrary and $h_{1}= uh_{5}$, or $h_{1}$ arbitrary and
  $g_{1} = h_{5}(h_{1}+uhs)$, where $u = h_{5}^{4} + h_{5}^{2}
  h_{4}+h_{2}$. Furthermore, we set $h_{3}= h_{5}^{3}$ and $h_{1}^{*}=
  h_{5}$. Then the coefficients of $g^{*}$ are determined. If
  $g'(g^{*})'\neq 0$, then the above constitute a collision, and by
  comparing coefficients, one finds that these are all. Their number
  is at most $2q^{4}$, so that $t \leq 2q^{4}$.

For an explicit description of $g^{*}$, we set $u_{2}= h_{4} + h_{5}^{2}$.
In the first case, where $h_{1}= uh_{5}$, we have
$$
g^{*}= x^{6}+u_{2}^{2}x^{4}+g_{1}x^{3}+(u^{2}+u_{2}g_{1})x^{2}+g_{1}ux.
$$
In the second case, we have
$$
g^{*}= x^{6}+u_{2}^{2}x^{4}+h_{5}(h_{1}+uh_{5})x^{3}+(u_{2}h_{1}h_{5}+
uh_{2})x^{2}+h_{1}(h_{1}+uh_{5})x.
$$
In both cases, $g_{1}= g' \neq 0$ implies that $(g^{*})' \neq 0$.
\end{example}

\cite{gie88}, Theorem 3.8, shows that there exist polynomials of
degree $n$ over a field of characteristic $p$ with super-polynomially
many decompositions, namely at least $\dg ^{\lambda \log n}$ many,
where $\lambda = (6 \log p)^{-1}$.


\section{Acknowledgements}

Many thanks go to Jaime Guti\'errez for alerting me to Umberto
Zannier's paper, and to Umberto Zannier for correcting a
misunderstanding. I appreciate the discussions with Arnaud Bodin,
Pierre D\`ebes, and Salah Najib about the topic, and in particular the
challenges that their work \cite{boddeb09} posed.

This work was supported by the B-IT Foundation and the Land
Nordrhein-Westfalen, some of its results are cited in
\cite{gat14}, and a final version
 has appeared in
  \emph{Finite Fields and Their Applications} {\bf 27} (2014), pages 41--71.
  \copyright\ 2013 Elsevier Inc.
This published version
 contains an error in \ref{lem:propdivi}, which is corrected here.


\bibliography{\jobname}

\begin{thebibliography}{28}
\providecommand{\natexlab}[1]{#1}
\providecommand{\url}[1]{\texttt{#1}}
\providecommand{\urlprefix}{URL }
\providecommand{\selectlanguage}[1]{\relax}

\bibitem[{Barton \& Zippel(1985)}]{barzip85}
\textsc{David~R. Barton} \& \textsc{Richard Zippel} (1985).
\newblock {Polynomial} {Decomposition} {Algorithms}.
\newblock \emph{Journal of Symbolic Computation} \textbf{1}, 159--168.

\bibitem[{Barton \& Zippel(1976)}]{barzip76}
\textsc{David~R. Barton} \& \textsc{Richard~E. Zippel} (1976).
\newblock A Polynomial Decomposition Algorithm.
\newblock In \emph{Proceedings of the third ACM Symposium on Symbolic and
  Algebraic Computation}, \textsc{Richard~D. Jenks}, editor, 356--358. ACM
  Press, Yorktown Heights, New York, United States.
\newblock \urlprefix\url{http://dx.doi.org/10.1145/800205.806356}.

\bibitem[{Beardon \& Ng(2000)}]{beang00}
\textsc{A.~F. Beardon} \& \textsc{T.~W. Ng} (2000).
\newblock On Ritt's Factorization of Polynomials.
\newblock \emph{Journal of the London Mathematical Society} \textbf{62},
  127--138.
\newblock
  \urlprefix\url{http://journals.cambridge.org/action/displayAbstract?fromPage=online&aid=58787}.

\bibitem[{Binder(1995)}]{bin95a}
\textsc{Franz Binder} (1995).
\newblock Characterization of polynomial prime bidecompositions: a simplified
  proof.
\newblock \emph{Contributions to General Algebra} \textbf{9}, 61--72.
\newblock
  \urlprefix\url{http://citeseerx.ist.psu.edu/viewdoc/download?doi=10.1.1.23.5819&rep=rep1&type=pdf}.

\bibitem[{Blankertz \emph{et~al.}(2013)Blankertz, von~zur Gathen \&
  Ziegler}]{blagat13}
\textsc{Raoul Blankertz}, \textsc{Joachim von~zur Gathen} \& \textsc{Konstantin
  Ziegler} (2013).
\newblock Compositions and collisions at degree $p^2$.
\newblock \emph{Journal of Symbolic Computation} \textbf{59}, 113--145.
\newblock ISSN 0747-7171.
\newblock \urlprefix\url{http://dx.doi.org/10.1016/j.jsc.2013.06.001}.
\newblock Extended abstract in \bgroup\em Proceedings of the 2012 International
  Symposium on Symbolic and Algebraic Computation ISSAC2012, {\rm Grenoble,
  France}\egroup\ (2012), 91--98.

\bibitem[{Bodin \emph{et~al.}(2009)Bodin, D{\`{e}}bes \& Najib}]{boddeb09}
\textsc{Arnaud Bodin}, \textsc{Pierre D{\`{e}}bes} \& \textsc{Salah Najib}
  (2009).
\newblock Indecomposable polynomials and their spectrum.
\newblock \emph{Acta Arithmetica} \textbf{139(1)}, 79--100.

\bibitem[{Corrales-Rodrig{\'{a}}{\~{n}}ez(1990)}]{cor90}
\textsc{Capi Corrales-Rodrig{\'{a}}{\~{n}}ez} (1990).
\newblock A note on Ritt's Theorem on decomposition of polynomials.
\newblock \emph{Journal of Pure and Applied Algebra} \textbf{68}(3), 293--296.
\newblock
  \urlprefix\url{http://www.sciencedirect.com/science/journal/00224049}.

\bibitem[{Dorey \& Whaples(1974)}]{dorwha74}
\textsc{F.~Dorey} \& \textsc{G.~Whaples} (1974).
\newblock {Prime} and {Composite} {Polynomials}.
\newblock \emph{Journal of Algebra} \textbf{28}, 88--101.
\newblock \urlprefix\url{http://dx.doi.org/10.1016/0021-8693(74)90023-4}.

\bibitem[{von~zur Gathen(1990{\natexlab{a}})}]{gat90c}
\textsc{Joachim von~zur Gathen} (1990{\natexlab{a}}).
\newblock {Functional} {Decomposition} of {Polynomials}: the {Tame} {Case}.
\newblock \emph{Journal of Symbolic Computation} \textbf{9}, 281--299.
\newblock \urlprefix\url{http://dx.doi.org/10.1016/S0747-7171(08)80014-4}.

\bibitem[{von~zur Gathen(1990{\natexlab{b}})}]{gat90d}
\textsc{Joachim von~zur Gathen} (1990{\natexlab{b}}).
\newblock {Functional} {Decomposition} of {Polynomials}: the {Wild} {Case}.
\newblock \emph{Journal of Symbolic Computation} \textbf{10}, 437--452.
\newblock \urlprefix\url{http://dx.doi.org/10.1016/S0747-7171(08)80054-5}.

\bibitem[{von~zur Gathen(2010{\natexlab{a}})}]{gat10f}
\textsc{Joachim von~zur Gathen} (2010{\natexlab{a}}).
\newblock Counting decomposable multivariate polynomials.
\newblock \emph{Applicable Algebra in Engineering, Communication and Computing}
  \textbf{22}(3), 165--185.
\newblock \urlprefix\url{http://dx.doi.org/10.1007/s00200-011-0141-9}.

\bibitem[{von~zur Gathen(2010{\natexlab{b}})}]{gat10}
\textsc{Joachim von~zur Gathen} (2010{\natexlab{b}}).
\newblock {Shift-invariant} polynomials and {R}itt's {S}econd {T}heorem.
\newblock \emph{Contemporary Mathematics} \textbf{518}, 161--184.

\bibitem[{von~zur Gathen(2013)}]{gat13}
\textsc{Joachim von~zur Gathen} (2013).
\newblock Lower bounds for decomposable univariate wild polynomials.
\newblock \emph{Journal of Symbolic Computation} \textbf{50}, 409--430.
\newblock \urlprefix\url{http://dx.doi.org/10.1016/j.jsc.2011.01.008}.

\bibitem[{von~zur Gathen(2014)}]{gat14}
\textsc{Joachim von~zur Gathen} (2014).
\newblock Counting decomposable univariate polynomials.
\newblock \emph{Combinatorics, Probability and Computing, Special Issue, to
  appear} \textbf{\unskip\null}.
\newblock Extended abstract in \bgroup\em Proceedings of the 2009 International
  Symposium on Symbolic and Algebraic Computation ISSAC2009, {\rm Seoul,
  Korea}\egroup\ (2009), 359--366. Preprint (2008) available at
  \url{http://arxiv.org/abs/0901.0054}.

\bibitem[{von~zur Gathen \emph{et~al.}(2013)von~zur Gathen, Viola \&
  Ziegler}]{gatvio13}
\textsc{Joachim von~zur Gathen}, \textsc{Alfredo Viola} \& \textsc{Konstantin
  Ziegler} (2013).
\newblock Counting reducible, powerful, and relatively irreducible multivariate
  polynomials over finite fields.
\newblock \emph{SIAM Journal on Discrete Mathematics} \textbf{27}(2), 855--891.
\newblock \urlprefix\url{http://dx.doi.org/10.1137/110854680}.
\newblock Also available at \url{http://arxiv.org/abs/0912.3312}. Extended
  abstract in \bgroup\em Proceedings of LATIN~2010, {\rm Oaxaca,
  Mexico}\egroup\ (2010).

\bibitem[{Giesbrecht(1988)}]{gie88}
\textsc{Mark~William Giesbrecht} (1988).
\newblock Complexity Results on the Functional Decomposition of Polynomials.
\newblock Technical Report 209/88, University of Toronto, Department of
  Computer Science, Toronto, Ontario, Canada.
\newblock Available as \url{http://arxiv.org/abs/1004.5433}.

\bibitem[{Kozen \& Landau(1989)}]{kozlan89}
\textsc{Dexter Kozen} \& \textsc{Susan Landau} (1989).
\newblock Polynomial Decomposition Algorithms.
\newblock \emph{Journal of Symbolic Computation} \textbf{7}, 445--456.
\newblock An earlier version was published as Technical Report 86-773, Cornell
  University, Department of Computer Science, Ithaca, New York, 1986.

\bibitem[{Levi(1942)}]{lev42}
\textsc{H.~Levi} (1942).
\newblock Composite {Polynomials} with coefficients in an arbitrary {Field} of
  characteristic zero.
\newblock \emph{American Journal of Mathematics} \textbf{64}, 389--400.

\bibitem[{Lidl \& Mullen(1993)}]{lidmul93}
\textsc{R.~Lidl} \& \textsc{G.~L. Mullen} (1993).
\newblock When Does a Polynomial over a Finite Field Permute the Elements of
  the Field?, {II}.
\newblock \emph{The American Mathematical Monthly} \textbf{100}, 71--74.

\bibitem[{Lidl \emph{et~al.}(1993)Lidl, Mullen \& Turnwald}]{lidmul93a}
\textsc{R.~Lidl}, \textsc{G.~L. Mullen} \& \textsc{G.~Turnwald} (1993).
\newblock \emph{Dickson polynomials}.
\newblock Number~65 in Pitman Monographs and Surveys in Pure and Applied
  Mathematics. Longman Scientific \& Technical.
\newblock ISBN 0-582-09119-5.

\bibitem[{Ritt(1922)}]{rit22}
\textsc{J.~F. Ritt} (1922).
\newblock Prime and Composite Polynomials.
\newblock \emph{Transactions of the American Mathematical Society} \textbf{23},
  51--66.
\newblock \urlprefix\url{http://www.jstor.org/stable/1988911}.

\bibitem[{Schinzel(1982)}]{sch82c}
\textsc{Andrzej Schinzel} (1982).
\newblock \emph{Selected Topics on Polynomials}.
\newblock Ann Arbor; The University of Michigan Press.
\newblock ISBN 0-472-08026-1.

\bibitem[{Schinzel(2000)}]{sch00c}
\textsc{Andrzej Schinzel} (2000).
\newblock \emph{Polynomials with special regard to reducibility}.
\newblock Cambridge University Press, Cambridge, UK.
\newblock ISBN 0521662257.

\bibitem[{Tortrat(1988)}]{tor88a}
\textsc{Pierre Tortrat} (1988).
\newblock Sur la composition des polyn{\^{o}}mes.
\newblock \emph{Colloquium Mathematicum} \textbf{55}(2), 329--353.

\bibitem[{Turnwald(1995)}]{tur95}
\textsc{Gerhard Turnwald} (1995).
\newblock On Schur's Conjecture.
\newblock \emph{Journal of the Australian Mathematical Society, Series~A}
  \textbf{58}, 312--357.
\newblock
  \urlprefix\url{http://anziamj.austms.org.au/JAMSA/V58/Part3/Turnwald.html}.

\bibitem[{Williams(1971)}]{wil71}
\textsc{Kenneth~S. Williams} (1971).
\newblock Note on Dickson's permutation polynomials.
\newblock \emph{Duke Mathematical Journal} \textbf{38}, 659--665.
\newblock
  \urlprefix\url{http://mathstat.carleton.ca/~williams/papers/pdf/041.pdf}.

\bibitem[{Zannier(1993)}]{zan93}
\textsc{U{\hide{mberto}}~Zannier} (1993).
\newblock {Ritt's} {Second} {Theorem} in arbitrary characteristic.
\newblock \emph{Journal f{\"{u}}r die reine und angewandte Mathematik}
  \textbf{445}, 175--203.

\bibitem[{Zieve \& M{\"{u}}ller(2008)}]{ziemue08}
\textsc{Michael~E. Zieve} \& \textsc{Peter M{\"{u}}ller} (2008).
\newblock On Ritt's Polynomial Decomposition Theorems.
\newblock arXiv:0807.3578.

\end{thebibliography}

\end{document}